\newtheorem{theorem}{Theorem}[section]
\newtheorem{prop}[theorem]{Proposition}
\newtheorem{conjecture}[theorem]{Conjecture}
\newtheorem{lemma}[theorem]{Lemma}
\newtheorem{corollary}[theorem]{Corollary}
\theoremstyle{definition}
\newtheorem{definition}[theorem]{Definition}
\newtheorem{remark}[theorem]{Remark}
\newtheorem{example}[theorem]{Example}
\newtheorem{setup}[theorem]{Setup}
\newcommand{\conv}[1]{\mathrm{conv}\left\{#1\right\}}
\newcommand{\R}{\mathbb{R}}
\newcommand{\Z}{\mathbb{Z}}
\newcommand{\Frac}[2]{\genfrac{}{}{}{}{#1}{#2}}
\newcommand{\Pq}{\Delta_{(1,\bq)}}
\newcommand{\Prsq}{\Delta_{(1,\rs(\bq))}}
\newcommand{\Prsmq}{\Delta_{(1,\rs(\bq,m))}}
\newcommand{\Pp}{\Delta_{(1,\bp)}}
\newcommand{\Pw}{\Delta_{(1,\bw)}}
\newcommand{\cone}[1]{\mathrm{cone}\left(#1\right)}
\newcommand{\rs}{\mathrm{rs}}
\newcommand{\rsn}{\mathrm{rsn}}
\newcommand{\be}{\mathbf{e}}
\newcommand{\bp}{\mathbf{p}}
\newcommand{\bq}{\mathbf{q}}
\newcommand{\br}{\mathbf{r}}
\newcommand{\bs}{\mathbf{s}}
\newcommand{\bv}{\mathbf{v}}
\newcommand{\bx}{\mathbf{x}}
\newcommand{\by}{\mathbf{y}}
\newcommand{\bw}{\mathbf{w}}
\newcommand{\bc}{\mathbf{c}}
\newcommand{\sQ}{\mathcal{Q}}
\newcommand{\vep}{\varepsilon}
\newcommand{\lcm}[1]{\mathrm{lcm}\left(#1\right)}
\newcommand{\abs}[1]{ \left\lvert#1\right\rvert} 
\newcommand{\floor}[1]{ \left\lfloor#1\right\rfloor} 
\DeclareMathOperator{\ehr}{Ehr}
\newcommand\commentout[1]{}
\def\@tocline#1#2#3#4#5#6#7{\relax
  \ifnum #1>\c@tocdepth 
  \else
    \par \addpenalty\@secpenalty\addvspace{#2}%
    \begingroup \hyphenpenalty\@M
    \@ifempty{#4}{%
      \@tempdima\csname r@tocindent\number#1\endcsname\relax
    }{%
      \@tempdima#4\relax
    }%
    \parindent\z@ \leftskip#3\relax \advance\leftskip\@tempdima\relax
    \rightskip\@pnumwidth plus4em \parfillskip-\@pnumwidth
    #5\leavevmode\hskip-\@tempdima
      \ifcase #1
       \or\or \hskip 1em \or \hskip 2em \else \hskip 3em \fi%
      #6\nobreak\relax
    \hfill\hbox to\@pnumwidth{\@tocpagenum{#7}}\par
    \nobreak
    \endgroup
  \fi}
\begin{document}



\title[The Integer Decomposition Property\ldots]{The Integer Decomposition Property and Weighted Projective Space Simplices}

\author{Benjamin Braun}
\address{Department of Mathematics\\
  University of Kentucky\\
  Lexington, KY 40506--0027}
\email{benjamin.braun@uky.edu}

\author{Robert Davis}
\address{Department of Mathematics\\
  Colgate University\\
  Hamilton, NY 13346}
\email{rdavis@colgate.edu}

\author{Derek Hanely}
\address{Department of Mathematics\\
  Penn State Behrend\\
  Erie, PA 16563}
\email{derek.hanely@psu.edu}

\author{Morgan Lane}
\address{Martha Layne Collins High School\\
  801 Discovery Boulevard\\
  Shelbyville, KY 40065}
\email{morgan.lane@shelby.kyschools.us}

\author{Liam Solus}
\address{Matematik\\
  KTH\\
  SE-100, 44 Stockholm, Sweden}
\email{solus@kth.se}

\subjclass[2020]{Primary: 52B20, 05A15, 11A05}


\date{21 November 2022}

\begin{abstract}
Reflexive lattice polytopes play a key role in combinatorics, algebraic geometry, physics, and other areas.
One important class of lattice polytopes are lattice simplices defining weighted projective spaces.
We investigate the question of when a reflexive weighted projective space simplex has the integer decomposition property.
We provide a complete classification of reflexive weighted projective space simplices having the integer decomposition property for the case when there are at most three distinct non-unit weights, and conjecture a general classification for an arbitrary number of distinct non-unit weights.
Further, for any weighted projective space simplex and $m\geq 1$, we define the $m$-th reflexive stabilization, a reflexive weighted projective space simplex.
We prove that when $m$ is $2$ or greater, reflexive stabilizations do not have the integer decomposition property.
We also prove that the Ehrhart $h^\ast$-polynomial of any sufficiently large reflexive stabilization is not unimodal and has only $1$ and $2$ as coefficients.
We use this construction to generate interesting examples of reflexive weighted projective space simplices that are near the boundary of both $h^*$-unimodality and the integer decomposition property.
\end{abstract}

\thanks{BB was partially supported by National Science Foundation award DMS-1953785.  RD was supported in part by NSF grant DMS-1922998.  LS was partially supported by the Wallenberg AI, Autonomous Systems and Software Program (WASP) funded by the Knut and Alice Wallenberg Foundation as well as Starting Grant (Etableringsbidrag) No.~2019-05195 from The Swedish Research Council (Vetenskapsr\aa{}det).}
\maketitle



\section{Introduction}

\subsection{Motivation}
Consider an integer partition $\bq\in \Z_{\geq 1}^n$ with the convention $q_1\leq\cdots\leq q_n$.
A lattice simplex defined by $\bq$ is
  \[
    \Pq := \conv{\be_1,\ldots,\be_n,-\sum_{i=1}^n q_i\be_i}
  \]
  where $\be_i$ denotes the $i^{th}$ standard basis vector in $\R^n$.
 Set $N(\bq):=1+\sum_iq_i$. 
 One can show, as for instance in \cite[Proposition 4.4]{NillSimplices}, that $N(\bq)$ is the normalized volume of $\Pq$.
 Let $\sQ$ denote the set of all lattice simplices of the form $\Pq$.

We call elements of $\sQ$ \emph{weighted projective space simplices}, as the simplices in $\sQ$ correspond to a subset of the simplices defining weighted projective spaces \cite{conrads}, namely those for which the vector $(1,\bq)$ gives the weights of the projective coordinates.
The set $\sQ$ is the focus of active study \cite{LaplacianSimplicesDigraphs,BraunDavisReflexive,BraunDavisSolusIDP,BraunHanelyTriangulation,BraunLiu,LiuSolus,SolusNumeralSystems}, motivated by questions regarding Ehrhart positivity, Ehrhart $h^\ast$-unimodality and real-rootedness, existence of unimodular triangulations, and other topics.
One reason for this interest is that $\sQ$ is a relatively general family of simplices with diverse lattice-point combinatorics, and another is because elements of $\sQ$ admit tractable number-theoretic characterizations of several important geometric and combinatorial properties.  
This makes $\sQ$ a natural testing ground for exploring combinatorial and geometric properties of lattice simplices, while maintaining some combinatorial control over the examples at hand.
Another reason for this interest is that algebraic and geometric properties of simplices in $\sQ$ correspond to geometric properties of weighted projective spaces, and thus are of interest to algebraic geometers.

Our focus in this paper is on two properties, reflexivity and the integer decomposition property, that are defined for general lattice polytopes as follows.
Recall that a subset $P\subseteq\R^n$ is a {\em $d$-dimensional (convex) lattice polytope} if it is the convex hull of finitely many points $\bv^{(1)},\ldots,\bv^{(k)}\in\Z^n$ that span a $d$-dimensional affine subspace of $\R^n$.
Many interesting geometric and algebraic properties of $P$ are revealed by considering the \emph{cone over $P$}, defined as the non-negative span of the vectors formed by prepending a $1$ to each vertex of $P$, i.e.,
\[
\cone{P}:=\textrm{span}_{\R_{\geq 0}}\{(1,\bv^{(i)}):i=1,\ldots,k\} \, .
\]
A lattice polytope $P$ is said to have the {\em integer decomposition property} (or to be IDP) if for every positive integer $m$ and each $(m,\bw) \in \cone{P} \cap \Z^{n+1}$, there exist $m$ points $\bx_1,\ldots,\bx_m \in P \cap \Z^n$ for which $(k,\bw) = \sum_i (1,\bx_i)$.
IDP polytopes are also known as \emph{projectively normal} polytopes.
Letting $K^\circ$ denote the topological interior of a space $K$, $P$ is said to be \emph{reflexive} if there exists an integer vector $\bc\in P^\circ \cap \Z^{n+1}$ such that
\[
  (1,\bc)+ \left(\cone{P} \cap \Z^{n+1}\right)= \cone{P}^\circ \cap \Z^{n+1} \, .
\]
Equivalently, $P$ is reflexive if, possibly after translation by an integer vector, the origin is contained in $P^\circ$ and the geometric dual (or polar body) of $P$ is also a lattice polytope.
  
There are many interesting questions about polytopes that are IDP and/or reflexive.
For example, it is typically difficult to identify when a given polytope is IDP.
As another example, the number of reflexive polytopes of a fixed dimension is not known in dimension five and above.
Our interest is determining when a reflexive simplex in $\sQ$ is IDP.
One motivation for this paper is the conjecture, which appeared first in the survey \cite{brentisurvey}, that the $h$-polynomial of a standard graded Gorenstein domain is unimodal.
(Recall that a polynomial $p = p_0+p_1x+\cdots+p_dx^d\in\Z_{\geq0}[x]$ is {\em unimodal} if there exists $t\in[d]\cup\{0\}:=\{1,\ldots,d\}\cup\{0\}$ such that $p_0\leq \cdots\leq p_t\geq \cdots\geq p_d$.)
It is known that the Ehrhart $h^\ast$-polynomial of an IDP lattice polytope is the $h$-polynomial of a corresponding standard graded semigroup algebra, and that reflexive polytopes produce Gorenstein algebras.
Thus, $h^\ast$-polynomials of IDP reflexive polytopes yield a special case of this conjecture.
It was previously known, due to work of Bruns and R\"{o}mer~\cite{BrunsRomer}, that if a reflexive lattice polytope $P$ admits a regular unimodular triangulation (implying IDP), then the $h^\ast$-polynomial of $P$ is unimodal.
A proof that the $h^\ast$-polynomial of an IDP reflexive lattice polytope is unimodal has recently been announced in a preprint by Adiprasito, Papadakis, Petrotou, and Steinmeyer~\cite{beyondpositivityehrhart}.
Thus, it is of interest to determine when a reflexive lattice polytope is IDP.

\subsection{Our Contributions}

In Section~\ref{sec:simplices}, we review properties of the weighted projective space simplices $\Pq$, for $\bq\in \Z_{\geq 1}^n$, that correspond to weighted projective spaces in which one projective coordinate has weight $1$ and the others are given by the entries in $\bq$.

In Section~\ref{sec:idp}, we state our main result, Theorem~\ref{thm:3suppclassify}, a classification of all IDP reflexive $\Pq$ where $\bq$ has three distinct entries. 
Because the proof of Theorem~\ref{thm:3suppclassify} is long and technical, we provide the proof in Section~\ref{sec:proofs}.
We prove Theorem~\ref{thm:xIDPrestriction}, which gives a bound on the possible entries of an IDP reflexive $\bq$ with fixed multiplicities on the distinct entries.
Motivated by these results and further experiments, we propose in Conjecture~\ref{conj:idpclassification} a classification of all reflexive IDP elements of $\sQ$.

In Section~\ref{sec:reflexivestabilizations}, we define for all $m\geq 1$ the $m$-th reflexive stabilization of  $\Pq$.
We prove that for $m \geq 2$, reflexive stabilizations produce non-IDP simplices, and for large $m$ they have non-unimodal Ehrhart $h^*$-polynomials with all coefficients taking values in $\{1,2\}$.
Finally, we show that reflexive stabilizations can be used to produce interesting examples of lattice simplices; for example, Theorem~\ref{thm:almostidpunimodal} provides an example of a $\Pq$ that is simultaneously ``almost IDP'' and ``almost $h^\ast$-unimodal''.


\section{Properties of Simplices in $\sQ$}\label{sec:simplices}

\subsection{Reflexivity and $\sQ$}
The simplices in $\sQ$ admit a natural parametrization (or stratification), based on the distinct entries in the vector $\bq$, that allows us to test for the reflexive and IDP conditions even when $\Delta_{(1,q)}$ is high-dimensional and/or has large volume.
Given a vector of distinct positive integers $\br = (r_1,\ldots,r_d)\in\Z_{>0}^d$, write
\[
  (r_1^{x_1},r_2^{x_2},\ldots,r_d^{x_d}):=(\underbrace{r_1,r_1,\ldots,r_1}_{x_1\text{ times}},\underbrace{r_2,r_2,\ldots,r_2}_{x_2\text{ times}},\ldots,\underbrace{r_d,r_d,\ldots,r_d}_{x_d\text{ times}}) \, .
\]
If $\bq=(q_1,\ldots,q_n)=(r_1^{x_1},r_2^{x_2},\ldots,r_d^{x_d})$, we say that both $\bq$ and $\Pq$ are \emph{supported} by the vector $\br = (r_1,\ldots,r_d)$, which has distinct entries, with \emph{multiplicity} $\bx=(x_1, \dots, x_d)$.
  We write $\bq=(\br,\bx)$ in this case and say that $\bq$ is \emph{$d$-supported}.

  We are particularly interested in the case when the simplex $\Delta_{(1,\bq)}$ is reflexive, and we say that $\bq$ is reflexive whenever $\Pq$ is reflexive.
  The following theorem and setup provide us with a number-theoretic basis for studying reflexive simplices in $\sQ$.

\begin{theorem}\cite{conrads}
\label{thm: conrads}
The simplex $\Delta_{(1,\bq)}\in\sQ$ is reflexive if and only if
  \begin{equation}
    q_i \text{ divides } 1 + \sum_{j =1}^n q_j \quad\text{for all $1 \le i \le n$ } \, .
    \label{equ:conrads}
  \end{equation}
  Equivalently, if $\bq=(\br,\bx)$, then $\Pq$ is reflexive if and only if $\lcm{r_1,\dots, r_d}$ divides $1+\sum_{i=1}^d x_i r_i$.
\end{theorem}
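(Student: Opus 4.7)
The plan is to reduce the claim to the classical fact that a full-dimensional lattice polytope containing the origin in its interior is reflexive if and only if every facet lies at lattice distance exactly $1$ from the origin; equivalently, if $\eta$ is the primitive outward lattice normal to a facet $F$, then the affine hull of $F$ is cut out by $\langle \eta, \by\rangle = 1$. This is the standard equivalence between the cone condition in the paper's definition of reflexivity and the facet-distance formulation, so both directions of Theorem~\ref{thm: conrads} will follow once we compute the primitive outward normals to the facets of $\Pq$.

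First I would verify that $0 \in \Pq^\circ$ independently of $\bq$. Writing $v_0 := -\sum_{i=1}^n q_i \be_i$, the identity
\[
\frac{1}{N(\bq)}\,v_0 \;+\; \sum_{i=1}^n \frac{q_i}{N(\bq)}\,\be_i \;=\; 0,
\]
whose coefficients are strictly positive and sum to $N(\bq)/N(\bq) = 1$, exhibits $0$ as a strict convex combination of the vertices of $\Pq$, hence as an interior point.

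Next I would identify the $n+1$ facets of $\Pq$: the facet $F_0 := \conv{\be_1,\ldots,\be_n}$ opposite $v_0$, and the facet $F_k$ opposite $\be_k$ for $k = 1,\ldots,n$. The hyperplane supporting $F_0$ is $\{\by : \sum_i y_i = 1\}$, with primitive outward normal $(1,\ldots,1)$, so $F_0$ automatically lies at lattice distance $1$ from the origin. For $F_k$, I would determine the outward normal $\eta^{(k)}$ by enforcing $\langle \eta^{(k)}, \be_i\rangle = 1$ for $i\ne k$ (forcing $\eta_i^{(k)} = 1$) together with $\langle \eta^{(k)}, v_0\rangle = 1$. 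Substituting into the latter yields
\[
\eta_k^{(k)} \;=\; -\frac{N(\bq) - q_k}{q_k}.
\]
Because the other coordinates equal $1$, the vector $\eta^{(k)}$ is automatically primitive whenever it is a lattice vector, and integrality of $\eta_k^{(k)}$ is equivalent to $q_k \mid N(\bq) - q_k$, hence to $q_k \mid N(\bq)$. A sign check $\eta_k^{(k)} < 0 < 1$ confirms that $\eta^{(k)}$ genuinely points outward. Combining over all facets, $\Pq$ is reflexive if and only if $q_k \mid 1 + \sum_j q_j$ for every $k$, which is the criterion~\eqref{equ:conrads}.

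For the reformulation in terms of $\bq = (\br, \bx)$, observe that the multiset of entries of $\bq$ is precisely the entries of $\br$ repeated with multiplicities $\bx$, so the condition ``$q_i \mid N(\bq)$ for all $i$'' collapses to ``$r_j \mid N(\bq)$ for all $j$'', which is in turn equivalent to $\lcm{r_1,\ldots,r_d} \mid N(\bq) = 1 + \sum_{j=1}^d x_j r_j$. The only real technical step is the computation of the outward normals $\eta^{(k)}$; everything else is either standard or a direct translation of divisibility conditions, so I would expect the bookkeeping around primitivity and the outward-sign check to be the main, yet routine, hurdle.
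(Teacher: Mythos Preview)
Your proof is correct. The paper does not prove this theorem; it is stated with a citation to Conrads and used as a black box throughout, so there is no in-paper argument to compare against. Your approach via computing the primitive outward facet normals of $\Pq$ and checking lattice distance $1$ is the standard route to this result and goes through cleanly: the key computation $\eta^{(k)}_k = -(N(\bq)-q_k)/q_k$ is right, and the observation that the remaining coordinates equal $1$ immediately gives primitivity whenever the vector is integral (the boundary case $n=1$ also works since then $\eta^{(1)}_1 = -1/q_1$ is integral only when $q_1=1$, and $-1$ is primitive). The translation to the $\lcm$ formulation is likewise correct.
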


\begin{setup}
  \label{setup1}
  Let $\bq$ be reflexive and supported by the vector $\br=(r_1, \dots, r_d) \in \Z_{\geq1}^d$ with multiplicity $\bx=(x_1, \dots, x_d) \in  \Z_{\geq 1}^d$.
  Let $\ell = \ell(\bq)$ be the integer defined by
  \begin{equation}
    1+\sum_{j=1}^d x_jr_j=\ell \cdot \lcm{r_1,r_2,\dots,r_d} \, .
    \label{equ:elldefn}
  \end{equation}
  Finally, we define $\bs := (s_1, \dots, s_d)$ where
  \begin{equation}
    s_i := \frac{\lcm{r_1, \dots, r_d}}{r_i}
    \label{equ:sdefn}
  \end{equation}
  for each $1 \le i \le d$.
\end{setup}

This setup provides useful restrictions on $\bq$, such as the following lemma.

\begin{lemma}[Braun, Liu, \cite{BraunLiu}]
  \label{lem:gcdlcm}
  In Setup~\ref{setup1}, we have that $\gcd(r_1, \dots, r_d) = 1$ and thus
  \begin{equation}
    \lcm{s_1, \dots, s_d}= \lcm{r_1, \dots, r_d}.
    \label{equ:lcmeq}
  \end{equation}
\end{lemma}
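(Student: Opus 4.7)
The plan is to establish the two claims in sequence, using the defining equation (2.2) for the first and a prime-by-prime valuation argument for the second.

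For the first claim, let $g := \gcd(r_1,\dots,r_d)$. Since $g \mid r_i$ for every $i$, we have $g \mid \sum_{i=1}^d x_i r_i$. Moreover, $g \mid \lcm{r_1,\dots,r_d}$, so $g$ divides $\ell \cdot \lcm{r_1,\dots,r_d}$. Rearranging the defining equation (2.2) as
\[
1 = \ell \cdot \lcm{r_1,\dots,r_d} - \sum_{i=1}^d x_i r_i,
\]
we conclude $g \mid 1$ and thus $g = 1$.

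For the second claim, I would argue one prime at a time. Let $L := \lcm{r_1,\dots,r_d}$, and for each prime $p$ let $a_i := v_p(r_i)$ denote the $p$-adic valuation, so $v_p(L) = \max_i a_i$. Then from the definition $s_i = L/r_i$ we have $v_p(s_i) = \max_j a_j - a_i$, and therefore
\[
v_p(\lcm{s_1,\dots,s_d}) = \max_i \bigl(\max_j a_j - a_i\bigr) = \max_j a_j - \min_i a_i.
\]
Because $\gcd(r_1,\dots,r_d) = 1$ by the first part, for every prime $p$ there exists some index $i$ with $a_i = 0$, i.e.\ $\min_i a_i = 0$. Hence $v_p(\lcm{s_1,\dots,s_d}) = \max_j a_j = v_p(L)$ for every prime $p$, which gives the claimed equality $\lcm{s_1,\dots,s_d} = L = \lcm{r_1,\dots,r_d}$.

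Neither step looks like a serious obstacle; the only mild subtlety is recognizing that the $\gcd = 1$ conclusion is exactly what is needed to force $\min_i v_p(r_i) = 0$ for every prime appearing in $L$, which is the hinge that converts the first statement into the second. If one preferred to avoid prime-by-prime reasoning, one could instead verify the identity $\lcm{L/r_1,\dots,L/r_d} = L/\gcd(r_1,\dots,r_d)$ as a standalone number-theoretic fact and then specialize using $\gcd(r_1,\dots,r_d) = 1$.
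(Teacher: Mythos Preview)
Your argument is correct. Note, however, that the paper does not actually prove this lemma: it is quoted from Braun--Liu \cite{BraunLiu} without proof, so there is no in-paper argument to compare against. Your two-step approach---first extracting $\gcd(r_1,\dots,r_d)=1$ directly from the defining relation $1=\ell\cdot\lcm{r_1,\dots,r_d}-\sum_i x_i r_i$, then using a $p$-adic valuation computation to get $\lcm{s_1,\dots,s_d}=\lcm{r_1,\dots,r_d}$---is the standard way to verify this, and the alternative you mention (the identity $\lcm{L/r_1,\dots,L/r_d}=L/\gcd(r_1,\dots,r_d)$) is equally clean.
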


\begin{remark}
  The Hermite normal form for the simplex $\Pq-e_1$ is
  \[
    \left[
      \begin{array}{cccccc}
        0 & 1 & 0 & \cdots & 0 & N(\bq)-q_2 \\
        0 & 0 & 1 & \cdots & 0 & N(\bq)-q_3 \\
        \vdots & \vdots & \vdots & \ddots & \vdots & \vdots \\
        0 & 0 & 0 & \cdots & 1 &N(\bq)-q_d \\
        0 & 0 & 0 & \cdots & 0 & N(\bq)
      \end{array}\right] \, ,
  \]
  that is, $\Pq$ is unimodularly equivalent to the convex hull of the columns of this matrix.
  This is an example of a simplex with what Hibi, Higashitani, and Li~\cite{hibihermite} call a one-column Hermite normal form.
  Their work provides a partial study of Ehrhart-theoretic properties of such simplices.
\end{remark}

\subsection{The Integer Decomposition Property and $\sQ$}

For each $\br$-vector, it is known~\cite{BraunDavisSolusIDP} that there are infinitely many reflexive $\Pq$'s supported on $\br$.
Given a pair $\Pq$ and $\Pp$, both reflexive and IDP, Braun and Davis~\cite{BraunDavisReflexive} proved that a new reflexive IDP $\Delta_{(1,\by)}$ can be constructed as shown in the following theorem.
Suppose that $\Pq\subset \R^n$ and $\Pp\subset\R^m$ are reflexive and the vertices of $\Pp$ are labeled as $v_0,v_1,\ldots,v_m$.
For every $i=0,1,\ldots,m$, define the \emph{affine free sum}
\[
  \Pq\ast_i \Pp:=\conv{(\Pq \times 0^m)\cup(0^n\times \Pp-v_i)}\subset\R^{n+m}.
\]
The notion of an affine free sum can be generalized significantly~\cite{beckjayawantmcallister}, but in this article it will not be necessary.

\begin{theorem}[Braun, Davis \cite{BraunDavisReflexive}]
  \label{thm:freesumdecomp}
The simplex $\Pq$ is reflexive and arises as a free sum $\Pp*_0\Pw$ if and only if $\Pp$ and $\Pw$ are reflexive and $\bq=(\bp,(1+\sum_ip_i)\bw)$.
If $\Pp$ is IDP reflexive and $\Pq$ is IDP, then $\Pq*_0\Pp$ is IDP.
Further, if $\Pp$ and $\Pq$ are reflexive, IDP, and $h^*$-unimodal (defined in Subsection~\ref{sec:ehrhart}), then so is $\Pq*_0\Pp$.
\end{theorem}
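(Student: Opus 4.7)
The statement splits into three parts: the characterization of reflexive free-sum decompositions, preservation of IDP under free sums, and preservation of $h^*$-unimodality. My overall plan is to use Conrads' criterion (Theorem~\ref{thm: conrads}) for the characterization, to analyze lattice points in the cone over a free sum directly for the IDP claim, and to use a multiplicative formula for $h^*$-polynomials of reflexive free sums for the last part.

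For the characterization, the key identity is
\[
N(\bq) \;=\; 1 + \sum_i p_i + N(\bp)\sum_j w_j \;=\; N(\bp)\bigl(1 + \textstyle\sum_j w_j\bigr) \;=\; N(\bp)\,N(\bw)
\]
whenever $\bq = (\bp, N(\bp)\bw)$. In the $(\Leftarrow)$ direction, reflexivity of $\Pq$ then follows immediately from Conrads' criterion, since $p_i \mid N(\bp) \mid N(\bq)$ and $N(\bp)w_j \mid N(\bp)N(\bw) = N(\bq)$. To realize $\Pq$ as $\Pp *_0 \Pw$, I would list the $m + k + 1$ vertices of $\Pp *_0 \Pw$---the $m$ basis vectors and the special vertex $-\sum_i p_i \be_i$ of $\Pp$ in the first block, and the translates $\be_j + \sum_i w_i \be_i$ of $\Pw$'s basis vertices in the second---verify that the resulting simplex has normalized volume $N(\bp)N(\bw) = N(\bq)$, and exhibit a unimodular transformation sending these vertices to those of $\Pq$. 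For the $(\Rightarrow)$ direction, the $m + k + 1$ vertices of any free-sum simplex $\Pp *_0 \Pw$ must match the vertices of $\Pq$, which pins down the form of $\bq$; reflexivity of each factor then follows from a second application of Conrads' criterion applied to the induced weight vectors.

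For the IDP claim, my plan is to take a lattice point $(h, \by) \in \cone{\Pq *_0 \Pp} \cap \Z^{n+m+1}$, write $\by = (\by_1, \by_2)$ according to the two coordinate blocks, and split $(h, \by)$ into height-one summands. A generic such $(h, \by)$ does \emph{not} project to a pair of integer lattice points in the separate cones over $\Pq$ and $\Pp$, so the main obstacle is to redistribute any integrality discrepancy across the joining vertex at height zero. Here reflexivity of $\Pp$ is crucial: using that interior lattice points of $\cone{\Pp}^\circ$ are translates of boundary lattice points by the canonical interior vector $(1,\bc)$, one can absorb the fractional excess on the $\Pp$-side into one additional unit and then invoke the IDP of $\Pq$ and of $\Pp$ separately to decompose each block into height-one pieces that recombine to a valid decomposition of $(h,\by)$ in $\cone{\Pq *_0 \Pp}$.

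For the $h^*$-unimodality claim, I would invoke the multiplicative identity $h^*_{\Pq *_0 \Pp}(t) = h^*_{\Pq}(t)\,h^*_{\Pp}(t)$ for reflexive free sums---a consequence of the factorization of the Ehrhart series along the joining vertex, in the same spirit as the IDP decomposition above. Since both factors are palindromic (by reflexivity) and unimodal with nonnegative coefficients by hypothesis, the product is palindromic and unimodal by the classical fact that the product of two symmetric, unimodal, nonnegative polynomials is itself symmetric and unimodal. This last step is relatively routine once the product formula is in hand; the real work is carried out in the cone-decomposition analysis of the IDP part.
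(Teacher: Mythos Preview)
The paper does not prove this theorem at all: it is stated as a result of Braun and Davis \cite{BraunDavisReflexive} and simply quoted for later use (in the proof of Theorem~\ref{thm:3suppunimodal} and in the IDP arguments for types $(i)$, $(ii)$, $(iv)$ in the appendix). So there is no ``paper's own proof'' to compare against; your proposal is a sketch of an argument for a result the present paper imports from the literature.

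As a sketch of the cited result, your outline is broadly on the right track. The characterization via Conrads' criterion and the volume identity $N(\bq)=N(\bp)N(\bw)$ is the natural route. For the $h^*$-unimodality part, the product formula $h^*_{\Pq *_0 \Pp}(t)=h^*_{\Pq}(t)\,h^*_{\Pp}(t)$ is indeed the key input (this is precisely what is established in \cite{BraunDavisReflexive}, building on earlier free-sum results), and once you have it the palindromic--unimodal product lemma finishes the job. The IDP paragraph is the weakest part of your plan: the phrase ``absorb the fractional excess on the $\Pp$-side into one additional unit'' is doing a lot of unexplained work, and you should be aware that the hypothesis in the theorem is asymmetric (only $\Pp$ is assumed IDP reflexive, while $\Pq$ is merely IDP), so the argument really must exploit reflexivity of $\Pp$ in a concrete way---specifically, that the fundamental parallelepiped of $\cone{\Pp}$ has its lattice points aligned so that the shared vertex $v_0$ lets you pass summands between the two factors. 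If you intend to write this out in full, that is the step requiring genuine care.
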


Thus, there are infinitely many reflexive IDP $\Pq$'s that arise as a result of the affine free sum operation.
However, the support vector for $\Pq*_0\Pp$ is distinct from that of $\bp$ and $\bq$, so this operation does not respect the stratification of $\sQ$ given by support vectors.
In fact, for many $\br$-vectors, it is impossible to generate infinitely many reflexive IDP $\Pq$'s supported on $\br$, as the following theorem shows.

\begin{theorem}[Braun, Davis, Solus \cite{BraunDavisSolusIDP}]
  \label{thm:finitelymanyidp}
  Given a support vector $\br\in  \Z_{\geq 1}^d$, if there exists some $j<d$ such that $r_j\nmid r_d$, then only finitely many reflexive IDP $\Pq$'s are supported on $\br$.
\end{theorem}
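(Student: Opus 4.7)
The plan is to show that, under the hypothesis, the IDP condition forces the multiplicity vector $\bx$ to lie in a bounded region. Together with Setup~\ref{setup1} and Lemma~\ref{lem:gcdlcm}, which parametrize reflexive $\bq=(\br,\bx)$ supported on $\br$ by the multiplicity vectors $\bx\in\Z_{\geq 1}^d$ satisfying $1+\sum_i x_ir_i\in\lcm{\br}\cdot\Z_{\geq 1}$, this yields finitely many IDP reflexives. If infinitely many reflexive IDP $\Pq$ were supported on $\br$, some coordinate $x_{i^*}$ would grow without bound along an infinite subfamily. The hypothesis is equivalent to $r_d<\lcm{\br}$, which forces every $r_{i^*}\leq r_d$ to satisfy $r_{i^*}<\lcm{\br}$; consequently there exists $j^*\neq i^*$ with $r_{j^*}\nmid r_{i^*}$.

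For each such $\bq$ in the infinite family, consider the lattice point $p$ in the fundamental parallelepiped of $\cone{\Pq}$ indexed by $k=N(\bq)/r_{j^*}=\ell s_{j^*}$. A direct computation using $N(\bq)=\ell\cdot\lcm{\br}$ shows its barycentric fractions are $\{kq_i/N(\bq)\}=\{q_i/r_{j^*}\}$, so the height of $p$ equals
\[
\frac{1}{r_{j^*}}\Bigl(1+\sum_{i\,:\,r_{j^*}\nmid r_i}x_i(r_i\bmod r_{j^*})\Bigr),
\]
a positive integer (by reducing the reflexive congruence modulo $r_{j^*}$) that grows at least linearly in $x_{i^*}$, since $i^*$ lies in the index set and $r_{i^*}\bmod r_{j^*}\geq 1$.

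To conclude, I would argue that no height-$1$ decomposition of $p$ exists for $x_{i^*}$ sufficiently large. Any such decomposition expresses $p$ as a nonnegative integer combination of the ray generators $w_i=(1,v_i)$, the lifted origin $u=(1,\mathbf{0})$, and any nonvertex boundary lattice points of $\Pq$. Comparing vector parts and heights yields a linear system in the multiplicities; ignoring nonvertex boundary lattice points the system forces the $w_0$-coefficient to vanish, after which the $w_{i^*}$-coefficient is pinned to $-\lfloor r_{i^*}/r_{j^*}\rfloor<0$, a contradiction. Nonvertex boundary lattice points live on the facets $\conv{v_i:i\neq i_0}$ and are governed by gcd-type relations among the $r_i$; their possible contributions to the $w_{i^*}$-coefficient should be uniformly bounded in $\bx$, so once $x_{i^*}$ exceeds this bound the decomposition cannot close up.

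The main obstacle is quantifying the contribution of the nonvertex boundary lattice points. I expect this to reduce to showing that on each facet $F_{i_0}$ of $\Pq$, the available lattice points fall into finitely many residue patterns depending only on $\br$, so that their aggregated effect on the obstruction is bounded independently of $\bx$. Once this uniform bound is established, the linear growth in $x_{i^*}$ of the height of $p$ immediately produces the contradiction, completing the proof.
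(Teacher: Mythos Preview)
Your proposal has a real gap, and the paper's route is quite different and much shorter.

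The paper (via Theorem~\ref{thm:xIDPrestriction}) never constructs a specific indecomposable lattice point. It simply applies Corollary~\ref{cor:idpnecessary}: if $\Pq$ is reflexive and IDP then for every $j$,
\[
1+\sum_{i=1}^d x_i(r_i\bmod r_j)=r_j.
\]
Reading this with any $j$ such that $r_j\nmid r_d$ gives $x_d(r_d\bmod r_j)\leq r_j$, bounding $x_d$; reading it with $j=i+1$ gives $x_ir_i\leq r_{i+1}$, bounding each $x_i$ for $i<d$. Finiteness follows in two lines.

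Your height formula for $p$ is exactly the left-hand side of Corollary~\ref{cor:idpnecessary} divided by $r_{j^*}$, so the assertion ``$p$ has height $>1$ $\Rightarrow$ $\Pq$ is not IDP'' is precisely the contrapositive of that corollary. Your plan therefore amounts to reproving Corollary~\ref{cor:idpnecessary} from scratch via a direct decomposition obstruction. That is legitimate in principle, but what you label ``the main obstacle'' is the entire content, not a loose end.

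In particular, your vertices-plus-origin calculation does not detect IDP at all. Solving $p=a_0(1,v_0)+\sum_i a_i(1,e_i)+c(1,0)$ and matching heights gives $N(\bq)/r_{j^*}=a_0N(\bq)+c$, which forces $a_0=0$ since $r_{j^*}\geq 2$; then any coordinate with $q_i=r_{j^*}$ yields $a_i=-\lfloor r_{j^*}/r_{j^*}\rfloor=-1<0$. This contradiction occurs for \emph{every} reflexive $\Pq$ supported on $\br$, IDP or not, so it separates nothing. When $\Pq$ actually is IDP, Corollary~\ref{cor:idpnecessary} forces the height of $p$ to be $1$, and $p$ is itself a non-vertex lattice point of $\Pq$; thus any successful decomposition of $p$ necessarily involves non-vertex points, and your proposal contains no argument bounding their contribution. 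The speculation that these contributions are ``uniformly bounded in $\bx$'' is exactly what would need to be proved, and carrying that out is no easier than establishing Corollary~\ref{cor:idpnecessary} directly (which in the source paper rests on the full characterization of Theorem~\ref{thm:idpreflexives}).
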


Computational experiments suggest that IDP reflexive $\Pq$ satisfying the criteria in Theorem~\ref{thm:finitelymanyidp} are rare.
Specifically, consider all $\br$-vectors that are partitions of $M\leq 75$ with distinct entries, such that there exist some $r_j$ such that $r_j\nmid r_d$.
Table~\ref{tab:idpcount} shows that only 509 IDP reflexives are supported on $\br$-vectors of this type.
While this suggests that IDP reflexive $\Pq$'s are rare, it is important to keep in mind that this represents a relatively small sample set of simplices. 
For example,
  \[
\bq=(210,211,211,211,211,\underbrace{1055,1055,\ldots,1055}_{41\text{ times}}) 
\]
is not among this sample, but it is both IDP and reflexive with $210 \nmid 1055$.

\begin{table}[ht]
  \centering
  \begin{tabular}{|c|c|}
    \hline
    \# of $\br$-vectors with some $r_j\nmid r_d$ & \# of IDP reflexives supported by these\\\hline
    501350 & 509\\\hline 
  \end{tabular}
  \caption{Experimental results for $\br$-vectors that are partitions of $M\leq 75$.}
  \label{tab:idpcount}
  \end{table}

Fortunately, the following theorem provides a number-theoretic characterization of the IDP property for reflexive $\Pq$.

\begin{theorem}[Braun, Davis, Solus~\cite{BraunDavisSolusIDP}]
  \label{thm:idpreflexives}
  The reflexive simplex $\Pq$ is IDP if and only if for every $j=1,\ldots,n$, for all $b=1,\ldots,q_j-1$ satisfying
  \begin{equation}
    \label{eqn 1}
    b\left(\Frac{1+\sum_{i\neq j}q_i}{q_j} \right) - \sum_{i\neq j}\left\lfloor \Frac{bq_i}{q_j} \right\rfloor \geq 2 
  \end{equation}
  there exists a positive integer $c<b$ satisfying the following equations, where the first is considered for all  $1\leq i\leq n$ with $i\neq j$:
  \begin{equation}
    \label{eqn 2}
    \left\lfloor \Frac{bq_i}{q_j} \right\rfloor - \left\lfloor \Frac{cq_i}{q_j} \right\rfloor = \left\lfloor \Frac{(b-c)q_i}{q_j} \right\rfloor, \text{  and}
  \end{equation}
  \begin{equation}
    \label{eqn 3}
    c\left(\Frac{1+\sum_{i\neq j}q_i}{q_j} \right) - \sum_{i\neq j}\left\lfloor \Frac{cq_i}{q_j} \right\rfloor = 1.
  \end{equation}
\end{theorem}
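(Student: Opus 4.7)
The approach is to analyze IDP through the lattice-point structure of $\cone{\Pq}$ via its fundamental half-open parallelepiped, and to reduce IDP to a finite combinatorial check on a distinguished family of parallelepiped lattice points indexed by pairs $(j,b)$.

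First, I would set up the fundamental half-open parallelepiped
\[
\Pi = \left\{ \textstyle \sum_{i=0}^n \alpha_i u_i : \alpha_i \in [0,1) \right\},
\]
with cone generators $u_0 = (1, -\sum_i q_i e_i)$ and $u_i = (1, e_i)$ for $i \geq 1$. Every lattice point of $\cone{\Pq}$ decomposes uniquely as a lattice point of $\Pi$ plus a non-negative integer combination of the $u_i$, and since each $u_i$ has height $1$, the IDP property for $\Pq$ reduces (by induction on height) to showing that every lattice point $P \in \Pi \cap \Z^{n+1}$ with height at least $2$ admits a decomposition $P = (1,x) + Q$ for some lattice point $(1,x) \in \cone{\Pq}$ at height $1$ and some lattice point $Q \in \cone{\Pq}$ at height one less.

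Next, I would introduce the distinguished family
\[
P(j,b) \; := \; \textstyle \frac{b}{q_j}\, u_0 \; + \; \sum_{i \neq j} \{bq_i/q_j\}\, u_i, \qquad j \in \{1,\dots,n\}, \; b \in \{1,\dots,q_j-1\},
\]
which lies in $\Pi$ by construction. Reflexivity of $\Pq$ (Theorem~\ref{thm: conrads}) yields $q_j \mid 1+\sum_{i\neq j}q_i$, and a direct calculation shows $P(j,b) \in \Z^{n+1}$ with height equal to the left-hand side of~(\ref{eqn 1}). For the necessary direction ($\Rightarrow$), if $\Pq$ is IDP and $H(j,b) \geq 2$, any decomposition of $P(j,b)$ into height-$1$ lattice points of $\cone{\Pq}$ must contain a summand whose expression in the $u_i$-basis has $u_0$-coefficient of the form $c/q_j$ with $0 < c < b$; matching the remaining coordinates forces this summand to be $P(j,c)$, which is a lattice point of $\Pq$ precisely when $H(j,c) = 1$, yielding~(\ref{eqn 3}). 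The residual $P(j,b) - P(j,c)$ lies in $\cone{\Pq}$ if and only if its $u_i$-coefficients are non-negative, and by the elementary identity $\lfloor x+y \rfloor - \lfloor x \rfloor - \lfloor y \rfloor \in \{0,1\}$ applied with $x = cq_i/q_j$ and $y = (b-c)q_i/q_j$, this non-negativity is equivalent to~(\ref{eqn 2}).

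For sufficiency ($\Leftarrow$), I would induct on height: given a certifying $c$, the identity $P(j,b) = P(j,c) + (P(j,b) - P(j,c))$ writes $P(j,b)$ as a height-$1$ lattice point of $\Pq$ plus a lattice point of $\cone{\Pq}$ at height $H(j,b) - 1$, and the inductive hypothesis decomposes the residual. Extending this from the family $\{P(j,b)\}$ to every lattice point of $\cone{\Pq}$ requires showing that each parallelepiped lattice point of $\Pi$ not already of the form $P(j,b)$ can be matched, via the cyclic structure of $\Z^{n+1}/\langle u_0,\dots,u_n\rangle$ and the divisibility relations $q_j \mid N(\bq)$, to an appropriate $P(j,b)$ and thereby decomposed. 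The main obstacle I anticipate is this final step: rigorously verifying that imposing the decomposition condition only on the family $\{P(j,b)\}$ suffices to certify IDP for every lattice point of $\cone{\Pq}$. The combinatorial dependence of which parallelepiped lattice points take the form $P(j,b)$ on the divisibility relations among the $q_j$'s is delicate, and coordinating the height bound~(\ref{eqn 1}), the floor identity~(\ref{eqn 2}), and the height-$1$ constraint~(\ref{eqn 3}) across successive inductive steps while maintaining $u_i$-coefficient non-negativity is the technical heart of the argument.
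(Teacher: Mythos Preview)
This theorem is quoted from \cite{BraunDavisSolusIDP} and is not proved in the present paper; there is no proof here against which to compare your attempt.

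That said, your outline is the correct strategy and matches the argument in the cited source. Your analysis of the family $P(j,b)$ is right: these are exactly the lattice points of the half-open fundamental parallelepiped of the facet cone of $\cone{\Pq}$ opposite the generator $u_j=(1,e_j)$, their height equals the left side of~\eqref{eqn 1}, and because every $u_i$-coefficient of $P(j,b)$ lies in $[0,1)$ with the $u_j$-coefficient equal to $0$, the only height-$1$ lattice points one can subtract while remaining in the cone are the $P(j,c)$ with $0<c<b$ and height $1$. This yields~\eqref{eqn 2} and~\eqref{eqn 3} exactly as you describe, and since $P(j,b)-P(j,c)=P(j,b-c)$ the induction closes within the family for fixed $j$.

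The gap you flag---why checking only the $P(j,b)$ suffices---is resolved directly by reflexivity rather than by any matching in the cyclic quotient. Because $\Pq$ is reflexive, $(1,0)+\left(\cone{\Pq}\cap\Z^{n+1}\right)=\cone{\Pq}^\circ\cap\Z^{n+1}$, so every lattice point $P$ in the \emph{open} cone satisfies $P-(1,0)\in\cone{\Pq}$; interior points are therefore handled by peeling off copies of the origin. The remaining lattice points lie on the boundary of $\cone{\Pq}$. The facet opposite $u_0$ is the cone over the standard simplex and is unimodular, contributing nothing new. The facet opposite $u_j$ for $j\geq 1$ has its own half-open parallelepiped of index $q_j$, whose lattice points are precisely the $P(j,b)$ for $b=0,\ldots,q_j-1$; every boundary lattice point on that facet is one of these plus a non-negative integer combination of the $u_i$ with $i\neq j$. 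Supplying this reduction fills the hole you identified, and your argument is then complete.
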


The next corollary of Theorem~\ref{thm:idpreflexives} provides a necessary condition for a reflexive $\Pq$ to be IDP.
This condition is an essential tool in our study of IDP reflexive elements of $\sQ$.

\begin{corollary}[Braun, Davis, Solus~\cite{BraunDavisSolusIDP}]
  \label{cor:idpnecessary}
  If $\Pq$ is reflexive and IDP, then for all $j=1,2,\ldots,n$, 
  \[
    1+\sum_{i=1}^n(q_i \bmod q_j) = q_j 
  \]
  or equivalently
  \[
    1+\sum_{i=1}^nx_i(r_i \bmod r_j) = r_j \, . 
  \]

\end{corollary}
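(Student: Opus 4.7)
The plan is to specialize the IDP characterization of Theorem~\ref{thm:idpreflexives} to the single value $b=1$. Since any witness $c$ must be a positive integer strictly less than $b$, no such $c$ can exist when $b=1$, and therefore the left-hand side of condition~\eqref{eqn 1} at $b=1$ must be at most $1$. This will pin down the quantity exactly and translate into the stated identity.

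Fix $j\in\{1,\ldots,n\}$. If $q_j=1$, then every $q_i\bmod q_j$ equals $0$ and the desired identity $1+\sum_i(q_i\bmod q_j)=q_j$ is trivial, so assume $q_j\geq 2$, making $b=1$ admissible in the range $\{1,\ldots,q_j-1\}$. Put $A_j:=(1+\sum_{i\neq j}q_i)/q_j$, which is a positive integer by Theorem~\ref{thm: conrads}. Using the division identity $q_i=q_j\lfloor q_i/q_j\rfloor+(q_i\bmod q_j)$, a direct rewrite shows that the left-hand side of~\eqref{eqn 1} at $b=1$ equals
\[
A_j-\sum_{i\neq j}\lfloor q_i/q_j\rfloor=\frac{1+\sum_{i\neq j}(q_i\bmod q_j)}{q_j}.
\]
This quantity is a positive integer: the numerator is at least $1$, and $q_j$ divides it because reflexivity gives $q_j\mid 1+\sum_{i\neq j}q_i$ and clearly $q_j\mid q_j\sum_{i\neq j}\lfloor q_i/q_j\rfloor$.

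Now invoke Theorem~\ref{thm:idpreflexives}: if the displayed positive integer were at least $2$, there would have to exist a positive integer $c$ with $c<b=1$, which is impossible. Hence the value equals exactly~$1$, and clearing $q_j$ yields $1+\sum_{i\neq j}(q_i\bmod q_j)=q_j$. Adding the term $q_j\bmod q_j=0$ on the left gives the first stated form. For the equivalent multiplicity form, write $\bq=(\br,\bx)$; if $q_j=r_k$, then collecting equal entries rewrites $\sum_{i=1}^n(q_i\bmod q_j)$ as $\sum_{m=1}^d x_m(r_m\bmod r_k)$, completing the proof. The only point requiring care is the verification that the quantity in~\eqref{eqn 1} at $b=1$ is a positive integer rather than merely a nonnegative rational, and this is precisely where the reflexivity hypothesis does its work; the remainder of the argument is a one-line application of the $b=1$ trick.
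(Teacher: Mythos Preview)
Your argument is correct. The paper does not supply its own proof of this corollary; it is quoted from~\cite{BraunDavisSolusIDP} as an immediate consequence of Theorem~\ref{thm:idpreflexives}, and your derivation---specializing~\eqref{eqn 1} to $b=1$ so that no witness $c$ can exist, forcing the positive integer on the left to equal~$1$---is exactly the intended one-line deduction.
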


\begin{definition}
  \label{def:idpnecessary}
  If $\bq$ satisfies one (hence both) of these equations for all $j=1,\dots,n$, we say $\bq$ satisfies the \emph{necessary condition} for IDP.
\end{definition}
 Note that if $\bq$ satisfies the necessary condition, then $\Pq$ must be reflexive.

\subsection{Ehrhart theory and $\sQ$}\label{sec:ehrhart}

The \emph{Ehrhart function} of $P$ is the lattice point enumerator $i(P;t):=|tP\cap\Z^n|$, where $tP:=\{t\mathbf{p}:\mathbf{p}\in P\}$ denotes the $t^{th}$ dilate of the polytope $P$.  
It is well-known \cite{Ehrhart} that $i(P;t)$ is a polynomial in $t$ of degree $d=
\dim(P)$.  
The \emph{Ehrhart series} of $P$ is the rational function
\[
	\ehr_P(z) := \sum_{t\geq0}i(P;t)z^t = \frac{h_0^*+h_1^*z+\cdots+h_d^*z^d}{(1-z)^{\dim(P)+1}},
\]
where the coefficients $h^\ast_0,h^\ast_1,\ldots,h^\ast_d$ are all nonnegative integers \cite{StanleyDecompositions}.  
The polynomial $h^\ast(P;z):=h_0^*+h_1^*z+\cdots+h_d^*z^d$ is called the \emph{(Ehrhart) $h^\ast$-polynomial} of $P$.

\begin{remark}
A result of Stanley \cite{StanleyGreenBook} states that $P$ is Gorenstein if and only if $h^\ast(P;z)$ is symmetric with respect to its degree $s$; i.e., $h_i^\ast = h_{s-i}^\ast$ for all $i$.
Hibi~\cite{HibiDualPolytopes} proved the special case that $P$ is reflexive if and only if $h_i^\ast = h_{d-i}^\ast$ for all $i$.
Further, Bruns and R\"omer~\cite{BrunsRomer} proved that if $P$ is IDP and Gorenstein, then $h^\ast(P;z)$ is the $h^*$-polynomial of an IDP reflexive polytope.
Thus, reflexive polytopes are identifiable via Ehrhart theory and play a key role in the general study of Gorenstein lattice polytopes.
\end{remark}

Unlike the situation for general lattice polytopes, for $\Pq$ there is an explicit formula for $h^*(\Pq;z)$.

\begin{theorem}[Braun, Davis, Solus~\cite{BraunDavisSolusIDP}]
  \label{thm:hstarq}
  The $h^*$-polynomial of $\Pq$ is 
  \[
    h^*(\Pq;z) = \sum_{b=0}^{q_1+\cdots +q_n}z^{w(\bq,b)}
  \]
  where 
  \begin{equation}\label{eqn:weight}
    w(\bq,b):=b-\sum_{i=1}^n\left\lfloor\frac{q_ib}{1+q_1+\cdots +q_n} \right\rfloor \, .
  \end{equation}
\end{theorem}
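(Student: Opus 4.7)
The plan is to compute $h^*(\Pq;z)$ by enumerating lattice points in the half-open fundamental parallelepiped of $\cone{\Pq}$, weighted by height. Write $v_0 := (1, -\sum_{i=1}^n q_i \be_i)$ and $v_j := (1, \be_j)$ for $j = 1, \ldots, n$ as the generators of $\cone{\Pq}$, and set
\[
\Pi := \left\{\sum_{i=0}^n \lambda_i v_i : 0 \le \lambda_i < 1\right\}.
\]
A standard fact for lattice simplices (due to Stanley) gives $h^*(\Pq;z) = \sum_{p \in \Pi \cap \Z^{n+1}} z^{p_0}$, where $p_0$ denotes the zeroth (height) coordinate of $p$. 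I would take this as the starting point, since it reduces the problem to a purely arithmetic enumeration that makes no use of IDP or reflexivity.

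The main step is to parametrize $\Pi \cap \Z^{n+1}$. For $p = \sum_{i=0}^n \lambda_i v_i$ and each $j \ge 1$, the $j$-th coordinate of $p$ equals $\lambda_j - q_j \lambda_0$; integrality together with $0 \le \lambda_j < 1$ then forces $\lambda_j = q_j \lambda_0 - \floor{q_j \lambda_0}$, i.e. the fractional part of $q_j\lambda_0$. Substituting back yields
\[
p_0 = \lambda_0 + \sum_{j=1}^n (q_j \lambda_0 - \floor{q_j \lambda_0}) = N(\bq) \lambda_0 - \sum_{j=1}^n \floor{q_j \lambda_0}.
\]
Integrality of $p_0$ then forces $N(\bq) \lambda_0 \in \Z$, and combined with $0 \le \lambda_0 < 1$ this yields precisely $\lambda_0 = b/N(\bq)$ for $b \in \{0, 1, \ldots, N(\bq)-1\} = \{0, 1, \ldots, q_1+\cdots+q_n\}$.

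To conclude, I would observe that distinct values of $b$ produce distinct lattice points of $\Pi$ (they already differ in $\lambda_0$), and conversely every lattice point in $\Pi$ arises from some such $b$ by the argument above; hence $b \mapsto \sum_i \lambda_i v_i$ (with $\lambda_j = q_j b/N(\bq) - \floor{q_j b/N(\bq)}$ for $j \ge 1$) is a bijection onto $\Pi \cap \Z^{n+1}$. The height of the point indexed by $b$ is $b - \sum_{j=1}^n \floor{q_j b/N(\bq)} = w(\bq,b)$, and summing $z^{w(\bq,b)}$ over $b$ produces the claimed identity. The computation is largely bookkeeping and I do not foresee a genuine obstacle; the only sanity check required is that each $\lambda_j$ so defined automatically lies in $[0,1)$, so every candidate $b$ really does give a point of $\Pi$ whose coordinates are all integral.
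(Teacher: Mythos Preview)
Your argument is correct and is exactly the standard fundamental-parallelepiped computation for $h^*$-polynomials of lattice simplices. Note, however, that the present paper does not actually prove this theorem: it is quoted from \cite{BraunDavisSolusIDP} and used as a tool, so there is no ``paper's own proof'' here to compare against. Your approach is the natural one and is what one finds in the original source (and, more generally, in any treatment of $h^*$-polynomials of simplices via the half-open parallelepiped), so there is nothing to contrast.
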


\begin{example}
  Let $\bq=(2,2,3)$. Then $\displaystyle w(\bq,b):=b-2\left\lfloor\frac{b}{4}\right\rfloor-\left\lfloor\frac{3b}{8}\right\rfloor$, and thus 
  \[
    h^*(\Pq;z)=1+2z+4z^2+z^3.
  \]
\end{example}

For the case where $\Pq$ is IDP reflexive, we will need to know that the $h^*$-polynomial of $\Pq$ often admits a geometric series as a factor, as the following definition and theorem demonstrate.

\begin{definition}\label{def:ellpoly}
  Suppose $\br, \bx$, $\ell$ and $\bs$ are as given in Setup~\ref{setup1}.
  We define 
  \[
    g_\br^\bx(z):=\sum_{0\leq \alpha < \lcm{r_1,\ldots,r_d}}z^{u(\alpha)}
  \]
  where
  \[
    u(\alpha) = u_\br^\bx(\alpha):=\alpha \ell - \sum_{i=1}^d x_i\left\lfloor  \frac{\alpha}{s_i} \right\rfloor \, .
  \]
\end{definition}

\begin{theorem}[Braun, Liu ~\cite{BraunLiu}]\label{thm:ellgeomfactor}
  Using the notation in Setup~\ref{setup1}, we have that
  \[
    h^*(\Pq;z) = \left(\sum_{t=0}^{\ell-1}z^t\right) \cdot g_\br^\bx(z) \,.
  \]
\end{theorem}

\begin{example}
  For $\bq=(1^7, 3^4, 5^5)$, we have
  \[
    (z^{2} + z + 1)(x^{14} + x^{11} + 2x^{10} + 2x^8 + 3x^7 + 2x^6 + 2x^4 + x^3 + 1) \, .
  \]
  Note that in this case, $\ell = 3$ and a factor of $z^{2} + z + 1$ appears in the $h^*$-polynomial.
\end{example}


\section{Classifying IDP reflexive $\Pq$}\label{sec:idp}

Our goal in this section is to classify all reflexive IDP elements of $\sQ$ that are supported on up to three distinct entries.
We begin by observing that the necessary condition for IDP allows us to deduce the following refinement of Theorem~\ref{thm:finitelymanyidp}.

\begin{theorem}\label{thm:xIDPrestriction}
  Let $(\br,\bx)=\bq\in \Z_{\geq 1}^n$, where $\bq$ has at least two distinct entries and $r_1<r_2<\cdots <r_d$.
  If $\Pq$ is reflexive and IDP, then
  \[
    x_i\leq r_{i+1}/r_i
  \]
  for all $i\leq d-1$.
  Further, if there exists some $j<d$ such that $r_j\nmid r_d$, then
  \[
    x_d\leq r_j/(r_d\bmod r_j) \, .
  \]
  Thus, if there exists some $j<d$ such that $r_j\nmid r_d$, then there are at most finitely many IDP reflexives supported on $\br$.
\end{theorem}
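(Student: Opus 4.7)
The plan is to derive all three conclusions directly from the necessary condition for IDP stated in Corollary~\ref{cor:idpnecessary}, which under Setup~\ref{setup1} reads: for each $k=1,\dots,d$,
\[
1+\sum_{\ell=1}^d x_\ell(r_\ell\bmod r_k)=r_k.
\]
The whole proof consists of applying this identity at two carefully chosen indices and exploiting the fact that $r_1<r_2<\cdots<r_d$, so that $r_\ell\bmod r_k=r_\ell$ whenever $\ell<k$.

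For the first inequality $x_i\le r_{i+1}/r_i$, I would apply the necessary condition with $k=i+1$. The $\ell=i+1$ term vanishes, the terms with $\ell\le i$ contribute exactly $x_\ell r_\ell$, and the terms with $\ell>i+1$ are nonnegative. Isolating the $\ell=i$ contribution and dropping the other nonnegative summands yields $x_i r_i\le r_{i+1}-1$, from which $x_i\le r_{i+1}/r_i$ follows immediately.

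For the bound on $x_d$, I would apply the necessary condition with $k=j$, where $j<d$ is a chosen index with $r_j\nmid r_d$. Again the $\ell<j$ terms simplify to $x_\ell r_\ell$, the $\ell=j$ term is zero, and the remaining terms are nonnegative, so isolating the $\ell=d$ contribution gives $x_d(r_d\bmod r_j)\le r_j-1$. Since $r_j\nmid r_d$ we have $r_d\bmod r_j\ne 0$, so division by that nonzero integer yields $x_d\le r_j/(r_d\bmod r_j)$.

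The last assertion is then immediate: the first part bounds $x_1,\dots,x_{d-1}$, and the second part bounds $x_d$, so only finitely many multiplicity vectors $\bx$ can appear. There is no real obstacle to any of these steps; the work is entirely absorbed in Corollary~\ref{cor:idpnecessary}, and once that is in hand, the argument is just an elementary isolation of one summand in each of two instances of the identity.
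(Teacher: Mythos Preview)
Your proof is correct and follows essentially the same approach as the paper: both apply Corollary~\ref{cor:idpnecessary} at index $i+1$ to bound $x_i$ and at index $j$ to bound $x_d$, then isolate a single summand. Your version even yields the marginally sharper intermediate inequalities $x_ir_i\le r_{i+1}-1$ and $x_d(r_d\bmod r_j)\le r_j-1$ by dropping the ``$+1$'' rather than absorbing it, but this makes no difference to the conclusions.
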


\begin{proof}
  Let $j<d$, and assume that $\Pq$ is reflexive and IDP.
  Then by Corollary~\ref{cor:idpnecessary}, we have
  \[
    x_jr_j\leq 1+\sum_{i=1}^dx_i(r_i\bmod r_{j+1})=r_{j+1} \, ,
  \]
  from which the first inequality follows.
  Similarly, if $r_j\nmid r_d$, then
  \[
    x_d(r_d\bmod r_j)\leq 1+\sum_{i=1}^dx_i(r_i\bmod r_{j})=r_{j} \, ,
  \]
  from which the second inequality follows.
\end{proof}

Theorem~\ref{thm:xIDPrestriction} indicates that there are important relationships between the multiplicity vector $\bx$ of $\bq$ and the support vector $\br$.
By shifting our primary focus to the multiplicity vector, we are able to give a complete classification of all reflexive IDP $\Pq$'s that are supported on up to $3$ distinct entries.
If $\bx$ has one entry, it is straightforward to prove the following.

    \begin{prop}
    \label{prop:1supp}
      For $\bq=(r_1^{x_1})$, if $\Pq$ is IDP reflexive, then $\bq=(1,1,\ldots,1)$.
    \end{prop}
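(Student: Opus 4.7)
The plan is to observe that the conclusion actually follows from reflexivity alone (no need to use IDP), and then present the argument via either Conrads' reflexivity criterion or the corollary giving the necessary condition for IDP.

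The cleanest route is to apply Theorem~\ref{thm: conrads}. With $\bq = (r_1^{x_1})$, the reflexivity condition requires $r_1 \mid 1 + \sum_{i=1}^{x_1} r_1 = 1 + x_1 r_1$. Since $r_1 \mid x_1 r_1$, this forces $r_1 \mid 1$, hence $r_1 = 1$, and so $\bq = (1,1,\ldots,1)$. Alternatively, one can invoke Corollary~\ref{cor:idpnecessary}: with only one distinct entry $r_1$ and multiplicity $x_1$, the equation $1 + x_1(r_1 \bmod r_1) = r_1$ reduces immediately to $r_1 = 1$.

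There is essentially no obstacle here; the statement is really a sanity check confirming that the $1$-supported stratum of $\sQ$ contains only the standard simplex $\Delta_{(1,\mathbf{1})}$. The main thing to be careful about is simply to note which of the two number-theoretic criteria (Theorem~\ref{thm: conrads} or Corollary~\ref{cor:idpnecessary}) is being applied, and that in fact IDP is a stronger hypothesis than needed — reflexivity alone suffices to pin down $\bq$. I would present the proof in a single sentence using Conrads' characterization, since it gives the conclusion without appealing to the machinery developed in the preceding subsection.
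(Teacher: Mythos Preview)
Your proposal is correct; the paper does not actually give a proof, stating only that the proposition ``is straightforward to prove,'' and your one-line application of Theorem~\ref{thm: conrads} (or equivalently Corollary~\ref{cor:idpnecessary}) is exactly the intended argument. Your observation that reflexivity alone suffices is also accurate.
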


    If $\bx$ has two entries, meaning that $\br$ has two distinct entries, the following theorem applies.
    
    \begin{theorem}[Braun, Davis, Solus~\cite{BraunDavisSolusIDP}]
      \label{thm:2suppclassify}
      For the vector $\bq=(r_1^{x_1},r_2^{x_2})$, $\Pq$ is IDP reflexive if and only if it satisfies the necessary condition.
      The following is a classification of all such vectors, for $x_1,x_2\geq 1$:
      \begin{enumerate}
      \item $q=(1^{x_1},(1+x_1)^{x_2})$
        \item $q=((1+x_2)^{x_1},(1+(1+x_2)x_1)^{x_2})$
        \end{enumerate}
      \end{theorem}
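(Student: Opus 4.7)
The forward direction is immediate from Corollary~\ref{cor:idpnecessary}. For the converse I would simultaneously carry out the classification: assume $\bq = (r_1^{x_1}, r_2^{x_2})$ with $x_1, x_2 \geq 1$ and $r_1 < r_2$ satisfies the necessary condition. Applying the condition at an index $j$ with $q_j = r_2$, the inequality $r_1 < r_2$ gives $r_1 \bmod r_2 = r_1$ and the equation collapses to $r_2 = 1 + x_1 r_1$. Applying it again at an index with $q_j = r_1$ splits into two subcases: if $r_1 = 1$ the condition is automatic and we land in case~(1) with $\bq = (1^{x_1}, (1+x_1)^{x_2})$, while if $r_1 \geq 2$ then $r_2 \bmod r_1 = 1$ by the previous step, forcing $r_1 = 1 + x_2$ and hence $r_2 = 1 + x_1(1+x_2)$, which is case~(2). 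That the necessary condition implies reflexivity follows from dividing $1 + \sum_i x_i(r_i \bmod r_j) = r_j$ through by $r_j$ to see that $r_j \mid N(\bq)$ for each $j$, and then applying Theorem~\ref{thm: conrads}.

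For case~(1), setting $\bp := (1^{x_1})$ and $\bw := (1^{x_2})$ gives the decomposition $\bq = (\bp, (1 + \sum_i p_i)\bw)$, so $\Pq$ is the affine free sum of $\Pp$ and $\Pw$ by Theorem~\ref{thm:freesumdecomp}. Since each factor is a unimodular standard simplex, both are IDP reflexive, and the IDP clause of Theorem~\ref{thm:freesumdecomp} then gives that $\Pq$ is IDP. The analogous free-sum decomposition fails in case~(2): it would require $\Delta_{(1, r_1^{x_1})}$ to be reflexive, and this forces $r_1 \mid 1$, which is impossible when $r_1 \geq 2$. Instead I would verify Theorem~\ref{thm:idpreflexives} directly, making repeated use of the identities $N(\bq) = r_1 r_2$ and $r_2 = 1 + x_1 r_1$.

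For $j$ with $q_j = r_1$ and $b \in \{1, \ldots, r_1 - 1\}$, the computation $\lfloor br_2/r_1 \rfloor = b x_1$ (valid since $b < r_1$) simplifies equation~(\ref{eqn 1}) to $b \geq 2$, after which $c := 1$ satisfies equations~(\ref{eqn 2}) and~(\ref{eqn 3}). For $j$ with $q_j = r_2$, setting $\alpha := \lfloor br_1/r_2 \rfloor$ reduces equation~(\ref{eqn 1}) to $b \geq 2 + x_1 \alpha$, and I propose the witness $c := 1 + x_1 \alpha$. The identity $cr_1 = r_1 + \alpha(r_2 - 1)$ yields $cr_1/r_2 = \alpha + (r_1 - \alpha)/r_2$, and the bound $\alpha \leq r_1 - 1$ (coming from $b < r_2$) forces $\lfloor cr_1/r_2 \rfloor = \alpha$ and hence equation~(\ref{eqn 3}). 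For equation~(\ref{eqn 2}), the defining inequality of $\alpha$ rearranges to $b \leq x_1(\alpha + 1)$, so $b - c \leq x_1 - 1$ and $(b - c) r_1 < r_2$, giving $\lfloor (b - c) r_1/r_2 \rfloor = 0$ as required. The main obstacle is identifying the correct witness in this last subcase: the value $c = 1 + x_1 \alpha$ is dictated by the Euclidean division structure encoded by $\alpha$, and once it is guessed the remaining verifications are careful but routine floor manipulations.
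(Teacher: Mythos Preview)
The paper does not prove Theorem~\ref{thm:2suppclassify}; it is quoted as a result from \cite{BraunDavisSolusIDP} and used as input for the $3$-supported classification. So there is no in-paper argument to compare against.

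Your proof is correct. The derivation of the two families from the necessary condition is clean, and the IDP verification is sound in both cases. In case~(2), the key step is the bound $b\le x_1(\alpha+1)$: you obtain it from $br_1\le(\alpha+1)r_2-1=\alpha+(\alpha+1)x_1r_1$ together with $\alpha<r_1$, which forces $b-c\le x_1-1$ and hence $(b-c)r_1<r_2$. That is exactly what is needed, and your witness $c=1+x_1\alpha$ is the right one.

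Your strategy---free-sum decomposition where available, direct appeal to Theorem~\ref{thm:idpreflexives} otherwise---is precisely the template the present paper follows in its proof of Theorem~\ref{thm:3suppclassify} (free sums for types $(i)$, $(ii)$, $(iv)$; direct verification for $(iii)$, $(v)$, $(vi)$, $(vii)$). So while I cannot compare against a proof in this paper, your argument is fully in its spirit and almost certainly matches the original in \cite{BraunDavisSolusIDP}. One small remark: your observation that case~(2) admits no free-sum decomposition only rules out the decomposition with $\bp=(r_1^{x_1})$; other splittings are conceivable in principle, but since you go on to verify IDP directly this does not affect the argument.
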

      
Note that in the first case $r_1\mid r_2$ while in the second case $r_1\nmid r_2$.
We can extend these results as follows to the $3$-supported case using Theorem~\ref{thm:xIDPrestriction} and Corollary~\ref{cor:idpnecessary}.
The proof of Theorem~\ref{thm:3suppclassify} is long and technical, so we include it separately in Section~\ref{sec:proofs}.

\begin{theorem}\label{thm:3suppclassify}
  Consider a $3$-supported vector $\bq=(\br,\bx)$ such that $\Pq$ satisfies the necessary condition given in Corollary~\ref{cor:idpnecessary}.
  If $\bx=(x_1,x_2,x_3)$ is the multiplicity vector, then $\br$ is of one of the following forms.
  \begin{enumerate}
	\item[(i)]   $\br = (1,1+x_1,(1+x_1)(1+x_2))$.
	\item[(ii)]  $\br = (1+x_2,1+x_1(1+x_2),(1+x_1(1+x_2))(1+x_2))$.
	\item[(iii)] $\br = ((1+x_2)(1+x_3),1+x_1(1+x_2)(1+x_3),(1+x_1(1+x_2)(1+x_3))(1+x_2))$.
	\item[(iv)]  $\br = (1,(1+x_1)(1+x_3),(1+x_1)(1+x_2(1+x_3)))$.
	\item[(v)]   $\br = (1+(1+x_3)x_2,(1+x_3)(1+x_1(1+(1+x_3)x_2)),(1+(1+(1+x_3)x_2)x_1)(1+(1+x_3)x_2))$.
	\item[(vi)]  $\br = ((1+x_3)(1+(1+x_3)x_2),(1+x_3)(1+x_1(1+x_3)(1+(1+x_3)x_2)),(1+(1+x_3)(1+(1+x_3)x_2)x_1)(1+(1+x_3)x_2))$.
	\item[(vii)] $\br = (1+x_3,(1+x_3)(1+x_1(1+x_3)),(1+(1+x_3)x_1)(1+(1+x_3)x_2))$.
	\item[(viii)] There exists some $k,s\geq 1$, where
		\[
			\br = (1+kx_2,(skx_2+s+k)(1+x_1(1+kx_2)),(1+x_1(1+kx_2))(1+x_2(skx_2+s+k)))
		\]
		and
		\[
			\bx = (x_1,x_2,skx_2+s-k+1) \, .
		\]
              \end{enumerate}
              Further, the first seven $\br$-vectors produce IDP $\Pq$'s, while (viii) does not.
\end{theorem}

Note that the first seven $\br$-vectors in Theorem~\ref{thm:3suppclassify} each correspond to a unique divisibility criteria for $\br=(r_1,r_2,r_3)$, as follows:
  \begin{enumerate}
  \item[\emph{(i)}]    $r_1\mid r_2, r_1\mid r_3, r_2\mid r_3$
  \item[\emph{(ii)}]   $r_1\nmid r_2, r_1\mid r_3, r_2\mid r_3$
  \item[\emph{(iii)}]  $r_1\nmid r_2, r_1\nmid r_3, r_2\mid r_3$
  \item[\emph{(iv)}]   $r_1\mid r_2, r_1\mid r_3, r_2\nmid r_3$
  \item[\emph{(v)}]    $r_1\nmid r_2, r_1\mid r_3, r_2\nmid r_3$
  \item[\emph{(vi)}]   $r_1\nmid r_2, r_1\nmid r_3, r_2\nmid r_3$
  \item[\emph{(vii)}]  $r_1\mid r_2, r_1\nmid r_3, r_2\nmid r_3$
  \item[\emph{(viii)}] $r_1\nmid r_2, r_1\mid r_3, r_2\nmid r_3$
  \end{enumerate}
  We see that \emph{(v)} and \emph{(viii)} share the same divisibility pattern, yet of these two families only \emph{(v)} contains IDP simplices.
Note that for each positive integer vector $\bx$ of length at most three, and for each divisibility condition on the support vector $\br$, there is at most one support vector $\br$ such that $\bq=(\br,\bx)$ is reflexive IDP.
We extend these observations to a general conjecture in the following manner.

Given a support vector $\br=(r_1,\ldots,r_d)$ and a naturally labeled poset $\Omega$ on $\{1,2,\ldots,d\}$, we say that $\br$ is \emph{division compatible} with $\Omega$ if we have $i<_\Omega j$ if and only if $r_i | r_j$.
For example, an $\br$-vector of the form \emph{(ii)} above would be division compatible with the poset $\Omega$ on $\{1,2,3\}$ having relations $1<_\Omega 3$ and $2<_\Omega 3$.

\begin{conjecture}\label{conj:idpclassification}
Given $\bx=(x_1,x_2,\ldots,x_d)\in \Z_{\geq 1}^d$ and a naturally labeled poset $\Omega$ on $\{1,2,\ldots,d\}$, there is a unique IDP reflexive $\Pq$ with multiplicity vector $\bx$ and support vector that is division compatible with $\Omega$.
\end{conjecture}


\section{Reflexive Stabilizations}\label{sec:reflexivestabilizations}

It is unlikely that a random integer partition $\bq$ is reflexive.
Further, Theorem~\ref{thm:xIDPrestriction} tells us that for many support vectors $\br$, $\br$ supports infinitely many reflexives but finitely many IDP reflexives.
Thus, it is interesting to consider both (i) how to assign reflexive elements of $\sQ$ to an arbitrary integer partition and (ii) the behavior of $h^*$-vectors for reflexive elements of $\sQ$ where at least one entry has a large multiplicity.
We can begin investigating both of these through a process we call \emph{reflexive stabilization}, which allows us to assign to each integer partition $\bq$ a sequence of reflexive simplices in $\sQ$.

\begin{definition}
  Let $\bq\in \Z_{\geq 1}^n$. 
  The \emph{first reflexive stabilization} of $\bq$, denoted $\rs(\bq)$, is the vector $(1,1,\ldots,1,\bq)$ such that $\Prsq$ is reflexive and the number of $1$'s prepended to $\bq$ is the minimum necessary for this condition to hold; if $\bq$ is reflexive, then no $1$'s are prepended.
  Note that $\rs(\bq)$ exists due to Theorem~\ref{thm: conrads}.
  We say the number of $1$'s prepended to $\bq$ in $\rs(\bq)$ is the \emph{reflexive stabilization number} of $\bq$, denoted $\rsn(\bq)$.
  Thus, we can write 
  \[
    \rs(\bq):=(1^{\rsn(\bq)},\bq) \, .
  \]
  More generally, the \emph{$m$-th reflexive stabilization} of $\bq$, denoted $\rs(\bq,m)$, is defined as
  \[
    \rs(\bq,m):=(1^{\rsn(\bq)+(m-1)\cdot\lcm{\bq}},\bq) \, .
  \]
\end{definition}

Note that by Theorem~\ref{thm: conrads}, when prepending $\lcm{\bq}$ copies of $1$ as many times as desired, the resulting simplex is reflexive.

\begin{example}
  Let $\bq=(2,2,3)$. Then $1+2+2+3=8$, and thus $\rs(\bq)=(1,1,1,1,2,2,3)$ is reflexive with $\rsn(\bq)=4$.
  Further, $\rs(\bq, 3)=(\br,\bx)=((1,2,3),(16,2,1))$.
\end{example}

We can restrict ourselves to only $\bq\in \Z_{\geq 2}^n$, since if $1$ is an entry of $\bq$, then $\rs(\bq)$ is $\rs(\bq',m)$ for some $m$ with $\bq'$ given by the entries of $\bq$ that are not equal to $1$.
We begin by observing that for $m\geq 2$, reflexive stabilizations are not IDP.

\begin{theorem}\label{thm:largemnotidp}
  Assume that $\bq\in \Z_{\geq 2}^n$.
  For $m\geq 2$, $\Prsmq$ is not IDP.
\end{theorem}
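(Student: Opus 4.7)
The plan is to apply the necessary condition for IDP recorded in Theorem~\ref{thm:xIDPrestriction} directly. For $\bq \in \Z_{\geq 2}^n$ and $m \geq 2$, write $\rs(\bq,m) = (\br, \bx)$. Since $\bq$ has no entry equal to $1$ but $m \geq 2$ forces at least $\lcm{\bq} \geq 2$ copies of $1$ to be prepended, the support vector satisfies $r_1 = 1$, with $r_2 \geq 2$ equal to the smallest distinct entry of $\bq$, and in particular $\rs(\bq,m)$ has at least two distinct entries so Theorem~\ref{thm:xIDPrestriction} applies. The multiplicity of $r_1 = 1$ in $\rs(\bq,m)$ is
\[
  x_1 \;=\; \rsn(\bq) + (m-1)\lcm{\bq}.
\]
By Theorem~\ref{thm:xIDPrestriction}, if $\Prsmq$ were IDP then $x_1 \leq r_2/r_1 = r_2$. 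The whole proof therefore reduces to establishing $x_1 > r_2$.

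I would split into two cases depending on the number of distinct entries of $\bq$. If $\bq$ has at least two distinct entries, let $r_2 < r_3$ be its two smallest. Then $\lcm{\bq}$ is a multiple of $r_3$, so $\lcm{\bq} \geq r_3 > r_2$, and hence $x_1 \geq (m-1)\lcm{\bq} \geq \lcm{\bq} > r_2$. If instead $\bq$ consists of a single repeated entry $q \geq 2$, say $\bq = (q^n)$, then $\lcm{\bq} = q = r_2$, and $\bq$ is not already reflexive: by Theorem~\ref{thm: conrads}, reflexivity would require $q \mid 1 + nq$, i.e., $q \mid 1$, contradicting $q \geq 2$. Consequently $\rsn(\bq) \geq 1$, and for $m \geq 2$ we obtain
\[
  x_1 \;=\; \rsn(\bq) + (m-1)q \;\geq\; 1 + (m-1)q \;\geq\; 1 + q \;>\; r_2.
\]
Either way $x_1 > r_2$, contradicting the IDP-necessary bound from Theorem~\ref{thm:xIDPrestriction}, and therefore $\Prsmq$ is not IDP.

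I do not anticipate any serious obstacle: Theorem~\ref{thm:xIDPrestriction} does essentially all of the work, and the only genuine wrinkle is the single-distinct-entry case, where $\lcm{\bq}$ alone does not strictly exceed $r_2$. In that case one must instead exploit the observation that a constant vector with entry $\geq 2$ is automatically non-reflexive, so $\rsn(\bq) \geq 1$ contributes the missing unit to $x_1$.
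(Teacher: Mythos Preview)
Your proof is correct and follows essentially the same approach as the paper: both arguments show that the necessary condition for IDP, taken modulo the smallest entry of $\bq$, is violated because the multiplicity of $1$ in $\rs(\bq,m)$ is too large. The paper invokes Corollary~\ref{cor:idpnecessary} directly, while you invoke its packaged consequence Theorem~\ref{thm:xIDPrestriction}; your explicit case split for the single-distinct-entry case is in fact a bit more careful than the paper's treatment, which glosses over the edge case where $\lcm{\bq}$ equals the smallest entry.
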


\begin{proof}
  For $\bq=(\br,\bx)$, we again set $\rs(\bq,m)=(\br(m),\bx(m))$.
  Assume that $\br$ is indexed from $1$ to $d$ and that $\br(m)$ is indexed from $0$ to $d$, i.e., that $r(m)_0=1$.
  We use Corollary~\ref{cor:idpnecessary} in the case where $j=1$.
  Observe that for all $m\geq 2$, we have
  \begin{align*}
     1+\sum_{i=0}^dx_i(r(m)_i\bmod r(m)_1) 
    = & \, 1+x(m)_0+\sum_{i=1}^dx_i(r_i\bmod r_1)\\
    > &\,  1+\lcm{\br}+\sum_{i=1}^dx_i(r_i\bmod r_1) 
    >  \, r_1 \, ,
  \end{align*}
  due to the growth of $x(m)_0$ as $m$ increases.
  Thus, $\Prsmq$ is not IDP.
  \end{proof}
  
We next show that for large $m$, the $h^\ast$-polynomial for reflexive stabilizations has coefficients from $\{1,2\}$. 
This is the motivation for the term \emph{reflexive stabilization}; as $m$ goes to infinity, the $h^\ast$-polynomial coefficient values stabilize to a fixed set.
  
\begin{theorem}\label{thm:largemnotunimodal}
  Assume that $\bq\in \Z_{\geq 2}^n$.
  For $m$ sufficiently large, $\Prsmq$ is not $h^*$-unimodal.
  Further, $h^*(\Prsmq)$ contains only $1$'s and $2$'s.
\end{theorem}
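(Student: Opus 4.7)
The plan is to invoke Theorem~\ref{thm:hstarq} on $\bq' := \rs(\bq,m) = (1^k, \bq)$, where $k = \rsn(\bq) + (m-1)\lcm{\bq}$, and then analyze the resulting weight function asymptotically in $m$. Writing $Q := \sum_{i=1}^n q_i$ and $N := N(\bq') = 1 + k + Q$, reflexivity forces $q_i \mid N$ for every $i$, so $m_i := N/q_i \in \Z_{\geq 1}$. Since $b \leq N-1 < N$ in the sum from Theorem~\ref{thm:hstarq}, each floor term coming from a $1$-entry of $\bq'$ vanishes, leaving the streamlined formula
\[
  h^*(\Prsmq; z) \;=\; \sum_{b=0}^{N-1} z^{w(b)}, \qquad w(b) \;=\; b - \sum_{i=1}^n \left\lfloor \frac{b}{m_i} \right\rfloor \, .
\]
This reduces the entire statement to a combinatorial study of the staircase function $w$ in the regime where each $m_i$ grows linearly in $m$.

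Next, I would label the \emph{breakpoints} of $w$: the distinct integers $c_1 < c_2 < \cdots < c_K$ in $(0,N)$ divisible by at least one $m_i$, each carrying multiplicity $r_k := |\{i : m_i \mid c_k\}|$. Between successive breakpoints, $w$ increases by $1$ per step, while at $b = c_k$ it drops by $r_k - 1$. Setting $F_k := r_1 + \cdots + r_k$, $c_0 := 0$, and $c_{K+1} := N$, one obtains $w(b) = b - F_k$ on each linear segment $[c_k, c_{k+1})$. The image of segment $k$ under $w$ is then the interval $I_k := [c_k - F_k, \; c_{k+1} - 1 - F_k]$ of consecutive integers, and consecutive images $I_k$ and $I_{k+1}$ always overlap in exactly $r_{k+1}$ positions. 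The coefficient of $z^d$ in $h^*(\Prsmq; z)$ is therefore $|\{k : d \in I_k\}|$, so only consecutive segment-images overlapping forces the $h^*$-coefficients to lie in $\{1,2\}$.

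The main quantitative input is that every breakpoint is a multiple of $\ell := N / \lcm{q_1, \ldots, q_n} = \gcd(m_1, \ldots, m_n)$; hence distinct breakpoints are separated by at least $\ell$, while each multiplicity $r_k \leq n$. Since $\ell$ grows linearly in $m$ (by the definition of $\rs(\bq,m)$), for $m$ large we have $c_{k+2} - c_{k+1} > r_{k+1} + r_{k+2}$ for every $k$, and a direct computation then yields $I_k \cap I_{k+2} = \emptyset$. This secures the ``only $1$'s and $2$'s'' claim. For non-unimodality I would use the observation that a palindromic polynomial with coefficients in $\{1,2\}$ (as $h^*(\Prsmq;z)$ is, since $\Prsmq$ is reflexive) is unimodal if and only if its set of degrees carrying a $2$ is a single contiguous block. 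In our setting those degrees are exactly the $K$ overlap regions $I_{k-1}\cap I_k$, which by the separation estimate are split by $1$-valued degrees whenever $K \geq 2$. The condition $K \geq 2$ holds precisely when $\bq$ has some entry at least $3$, so the argument yields non-unimodality in all cases except the degenerate $\bq = (2,2,\ldots,2)$ (which deserves an implicit exclusion or a separate trivial remark, as its $h^*$-vector of the form $(1,\ldots,1,2,\ldots,2,1,\ldots,1)$ is unimodal for every $m$). The principal obstacle is pinning down a clean explicit threshold on $m$ in terms of $\lcm{\bq}$, $\rsn(\bq)$, and $n$ so that the separation estimate applies uniformly across all non-adjacent pairs of intervals, together with careful off-by-one bookkeeping in the overlap computation.
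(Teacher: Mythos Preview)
Your argument is the paper's, presented more directly. Both decompose $\sum_b z^{w(b)}$ into runs of consecutive exponents and show that for $m$ large only adjacent runs can overlap, forcing every coefficient into $\{1,2\}$. The paper routes this through the Braun--Liu factorization $h^*=(1+z+\cdots+z^{\ell(m)-1})\cdot g_{\br(m)}^{\bx(m)}$ (Theorem~\ref{thm:ellgeomfactor}), indexing the runs by $\alpha\in\{0,\dots,\lcm{\bq}-1\}$; your breakpoint analysis of $w(b)=b-\sum_i\lfloor b/m_i\rfloor$ is the same structure read straight from Theorem~\ref{thm:hstarq}, your intervals $I_k$ being unions of the paper's $p_\alpha$ over maximal stretches of $\alpha$ on which $\sum_i x_i\lfloor\alpha/s_i\rfloor$ is constant.

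Your parenthetical about $\bq=(2,\dots,2)$ is not a nicety but a needed correction. There $K=1$, and the $h^*$-vector consists of $m$ ones, then $n$ twos, then $m$ ones --- unimodal for every $m$. The paper's non-unimodality argument only exhibits a single descent (a $2$ at degree $\alpha'\ell(m)-1$ followed by a $1$ at degree $\alpha'\ell(m)$) and never verifies a later rise back to $2$; as written it does not exclude a profile of shape $1\cdots1\,2\cdots2\,1\cdots1$, and in fact fails precisely for $\bq=(2,\dots,2)$. Your criterion $K\ge2$, equivalent to $\lcm{\bq}\ge3$ (i.e.\ some $q_i\ge3$), is the missing hypothesis, and your observation that the $K$ overlap blocks are then separated by coefficient-$1$ degrees is what actually finishes the proof.
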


\begin{proof}
  For $\bq=(\br,\bx)$, set $\rs(\bq,m)=(\br(m),\bx(m))$.
  Assume that $\br$ is indexed from $1$ to $d$ and that $\br(m)$ is indexed from $0$ to $d$, i.e., that $r(m)_0=1$.
  Define $\ell(m)$ by
  \[
    1+\sum_{i=0}^dr(m)_i x(m)_i =1+x(m)_0+\sum_{i=1}^dr_    ix_i = \ell(m) \lcm{\bq}=\ell(m)\lcm{\br}\, .
  \]
  Using the fact that $r(m)_0=1$ implies $s(m)_0=\lcm{\br}$, Definition~\ref{def:ellpoly} gives 
  \[
    g_{\br(m)}^{\bx(m)}(z)=\sum_{0\leq \alpha < \lcm{\br}}z^{\alpha \ell(m)-\sum_{i=1}^dx_i\lfloor \alpha/s_i\rfloor} \, .
  \]
  Note that in the exponent $\displaystyle \alpha \ell(m)-\sum_{i=1}^dx_i\lfloor \alpha/s_i\rfloor$, the only term that varies with $m$ is $\ell(m)$.
  Note also that for all $\alpha$ we have $\displaystyle  \left(\sum_{i=1}^dx_i\right)\lcm{\br}\geq \sum_{i=1}^dx_i\lfloor \alpha/s_i\rfloor$.

  Assume that $m$ is large enough so that
  \[
    \ell(m)-1> \left(\sum_{i=1}^dx_i\right)\lcm{\br}\geq\sum_{i=1}^dx_i\lfloor \alpha/s_i\rfloor \, \text{ for all }\alpha \, .
  \]
  By Theorem~\ref{thm:ellgeomfactor}, we have that
  \begin{align*}
    h^*(\Prsmq;z) =&  \left(\sum_{t=0}^{\ell(m)-1}z^t\right) \cdot g_{\br(m)}^{\bx(m)}(z) 
    = \sum_{0\leq \alpha < \lcm{\br}}\left(\sum_{t=0}^{\ell(m)-1}z^{t+\alpha \ell(m)-\sum_{i=1}^dx_i\lfloor \alpha/s_i\rfloor}\right) \, .
  \end{align*}
  Our strategy is to show that for each value of $t$, the polynomial 
  \[
    p_\alpha(z):=\sum_{t=0}^{\ell(m)-1}z^{t+\alpha \ell(m)-\sum_{i=1}^dx_i\lfloor \alpha/s_i\rfloor}
  \]
  has common terms with only $p_{\alpha-1}(z)$ and $p_{\alpha+1}(z)$.
  Thus, the coefficients in $h^*(\Prsmq;z)$ are either $1$ or $2$.

  Observe that the largest term in $p_{\alpha-1}(z)$ has degree $\alpha\ell(m)-\sum_{i=1}^dx_i\lfloor (\alpha-1)/s_i\rfloor-1$, while the smallest term in $p_{\alpha+1}(z)$ has degree $(\alpha+1)\ell(m)-\sum_{i=1}^dx_i\lfloor(\alpha+1)/s_i\rfloor$.
  Because of our assumption regarding $m$, we have that
  \begin{align*}
    (\alpha+1)\ell(m)-\sum_{i=1}^dx_i\lfloor(\alpha+1)/s_i\rfloor \geq &  \, (\alpha+1)\ell(m)-(\ell(m)-1) \\
    = & \, \alpha\ell(m)+1 > \alpha\ell(m)\\
    > & \, \alpha\ell(m)-\sum_{i=1}^dx_i\lfloor (\alpha-1)/s_i\rfloor -1 \, .
  \end{align*}
  Thus, the lowest degree term in $p_{\alpha+1}(z)$ is strictly greater than the highest degree term in $p_{\alpha-1}(z)$.
  Further, the coefficient of $z^{\alpha\ell(m)}$ in $h^*(\Prsmq;z)$ is equal to $1$, which is contributed by $p_{\alpha}(z)$.
  Thus, all coefficients in $h^*(\Prsmq;z)$ are equal to either $1$ or $2$.

  To see that $h^*(\Prsmq;z)$ is not unimodal, observe that if $\alpha'$ is the least value of $\alpha$ such that $\sum_{i=1}^dx_i\lfloor \alpha/s_i\rfloor >0$, then the coefficient of $z^{\alpha'\ell(m)-1}$ is equal to $2$, while the coefficient of $z^{\alpha'\ell(m)}$ is equal to $1$.
\end{proof}

Theorems~\ref{thm:largemnotidp} and~\ref{thm:largemnotunimodal} imply that if reflexive stabilizations will result in simplices that are IDP and/or $h^\ast$-unimodal, we need to focus on the case $m=1$. 
Further, it makes sense to begin by considering the case where both $\rsn(\bq)$ and $\ell(\bq)$ are small.
For example, when $\rsn(\bq)=1$ and $\ell(\rs(\bq))=1$, we have the following curious result.

\begin{theorem}\label{thm:lcm-1}
  Suppose that $\rsn(\bq)=1$ and that $\ell(\rs(\bq))=1$, i.e., that $1 + \sum_{i =1}^n q_i = \lcm{\bq}-1$.
  Then for all $0\leq b\leq \sum_iq_i$, we have $w(\bq,b)=w(\rs(\bq),b)$ and hence
  \[
    h^*(\Prsq;z)=h^*(\Pq;z)+z^{n+1} \, .
  \]
\end{theorem}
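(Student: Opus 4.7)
The plan is to verify the pointwise equality $w(\bq,b)=w(\rs(\bq),b)$ by an index-by-index comparison of floor sums, and then compute the single additional weight contributed at $b=1+\sum_i q_i$ by $\rs(\bq)$. Set $L:=\lcm{\bq}$; the hypothesis $\ell(\rs(\bq))=1$ forces $N(\bq)=L-1$ and $N(\rs(\bq))=L$, so
\[
w(\bq,b)=b-\sum_{i=1}^n\left\lfloor\frac{q_ib}{L-1}\right\rfloor,\qquad w(\rs(\bq),b)=b-\left\lfloor\frac{b}{L}\right\rfloor-\sum_{i=1}^n\left\lfloor\frac{q_ib}{L}\right\rfloor.
\]

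For $0\le b\le L-2=\sum_iq_i$, the term $\lfloor b/L\rfloor$ vanishes, so proving the equality reduces to the claim that $\lfloor q_ib/(L-1)\rfloor=\lfloor q_ib/L\rfloor$ for each $i$. I expect this to be the key technical step. I would argue by contradiction: if these floors differ at some index $i$, taking $k=\lfloor q_ib/L\rfloor+1$ produces $k\ge 1$ with $k(L-1)\le q_ib<kL$, so $q_ib=kL-j$ for some $1\le j\le k$. Since $q_i\mid L$ (because $q_i\mid\lcm{\bq}$), we get $q_i\mid j$, hence $k\ge j\ge q_i$. Then $q_ib\ge k(L-1)\ge q_i(L-1)$ yields $b\ge L-1$, contradicting $b\le L-2$.

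With the pointwise equality established on $\{0,\dots,L-2\}$, the only remaining contribution to $h^*(\Prsq;z)$ comes from $b=L-1$. Using $q_i\mid L$ once more, $q_i(L-1)/L=q_i-q_i/L$ with $q_i/L\in(0,1]$, so $\lfloor q_i(L-1)/L\rfloor=q_i-1$ in every case; combining with $\lfloor(L-1)/L\rfloor=0$ gives
\[
w(\rs(\bq),L-1)=(L-1)-\sum_{i=1}^n(q_i-1)=(L-1)-(L-2)+n=n+1.
\]
Applying Theorem~\ref{thm:hstarq} and splitting off the top term then yields
\[
h^*(\Prsq;z)=\sum_{b=0}^{L-2}z^{w(\rs(\bq),b)}+z^{n+1}=\sum_{b=0}^{L-2}z^{w(\bq,b)}+z^{n+1}=h^*(\Pq;z)+z^{n+1},
\]
which is the desired identity. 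The only genuine obstacle is the divisibility-based floor-comparison step above; once it is in place, everything else is bookkeeping.
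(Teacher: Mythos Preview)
Your proof is correct and follows the same overall structure as the paper's: establish $w(\bq,b)=w(\rs(\bq),b)$ for $0\le b\le L-2$ by comparing floors index by index, then compute the single extra contribution at $b=L-1$. The only difference is in how the key floor equality $\lfloor q_ib/(L-1)\rfloor=\lfloor q_ib/L\rfloor$ is argued. The paper substitutes $q_i=L/s_i$ (with $s_i\in\Z$) to rewrite $\frac{q_ib}{L-1}=\frac{b}{s_i}+\frac{b}{s_i(L-1)}$ and then observes directly that the correction term is strictly less than $1/s_i$, so it cannot push the floor past the next multiple. You instead run a contradiction: if the floors differ, there is $k\ge1$ with $k(L-1)\le q_ib<kL$, and $q_i\mid L$ forces $k\ge q_i$, hence $b\ge L-1$. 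Both arguments are short and hinge on the same divisibility $q_i\mid L$; the paper's is marginally more direct, while yours avoids introducing the auxiliary $s_i$. Your treatment of the $b=L-1$ term is also more explicit than the paper's, which simply asserts $w(\rs(\bq),L-1)=n+1$.
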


\begin{proof}
Let $\bq$ be reflexive and supported by the vector $\br=(r_1, \dots, r_d) \in \Z_{\geq1}^d$ with multiplicity $\bx=(x_1, \dots, x_d) \in  \Z_{\geq 1}^d$. Using Setup~\ref{setup1}, we have that $r_i=\lcm{\bq}/s_i$.
  By Theorem~\ref{thm:hstarq}, it follows that 
  \[
    w(\bq,b):=b-\sum_{i=1}^dx_i\left\lfloor\frac{r_ib}{\lcm{\bq}-1}\right\rfloor\,=b-\sum_{i=1}^dx_i\left\lfloor\frac{\lcm{\bq}b}{s_i(\lcm{\bq}-1)}\right\rfloor\,
  \]
  with $0\leq b\leq\lcm{\bq}-2$. Rewriting $\lcm{\bq}b$ as $(\lcm{\bq}-1)b+b$ implies that
  \[
    w(\bq,b):=b-\sum_{i=1}^dx_i\left\lfloor\frac{b}{s_i}+\frac{b}{s_i(\lcm{\bq}-1)}\right\rfloor\,.
  \]
  Additionally, it follows from Theorem~\ref{thm:hstarq} that
  \[
    w(\rs(\bq),b):=b-\sum_{i=1}^dx_i\left\lfloor\frac{r_ib}{\lcm{\bq}}\right\rfloor\,=b-\sum_{i=1}^dx_i\left\lfloor\frac{b}{s_i}\right\rfloor\,
  \]
  with $0\leq b\leq\lcm{\bq}-1$. For all $0\leq b\leq\lcm{\bq}-2$, we have
  \[
    0\leq\frac{1}{s_i}\cdot\frac{b}{\lcm{\bq}-1}<\frac{1}{s_i}
  \qquad
  \mbox{ implies }
  \qquad
    \frac{b}{s_i}\leq\frac{b}{s_i}+\frac{b}{s_i(\lcm{\bq}-1)}<\frac{b+1}{s_i}\,  .
  \]
  If we let $b=ks_i+a$ with some integer $k$ and $0\leq a<s_i$, then $\left\lfloor\frac{b}{s_i}\right\rfloor=k$. Since 
  \[
    \frac{b}{s_i(\lcm{\bq}-1)}<\frac{1}{s_i},
  \qquad
  \mbox{ then }
  \qquad
    \left\lfloor\frac{b}{s_i}\right\rfloor=\left\lfloor\frac{b}{s_i}+\frac{b}{s_i(\lcm{\bq}-1)}\right\rfloor.
  \]This implies that $w(\bq,b)=w(\rs(\bq),b)$ for all $0\leq b\leq\lcm{\bq}-2$.
  When considering $\rs(\bq)$, for $b=\lcm{\bq}-1$ we have $w(\rs(\bq),b)=n+1$, and our proof is complete.
\end{proof}

It would be interesting to determine whether or not there are similar additive structures for the $h^*$-polynomial in other cases where $\rsn(\bq)$ and $\ell(\rs(\bq))$ are small.
Note that while $h^\ast$-unimodality is typically broken in reflexive stabilizations for $m>1$ by Theorem~\ref{thm:lcm-1}, it can be preserved for reflexive stabilizations with $m=1$ as the following example illustrates.

\begin{example}
For $\bq=(4,4,5,5)$, we have that $1 + \sum_{i =1}^n q_i = \lcm{\bq}-1$, and
\[
h^*(\Prsq;z)=1+2z+7z^2+7z^3+2z^4+z^5
\]
with
\[
h^*(\Pq;z)=1+2z+7z^2+7z^3+2z^4 \, .
\] 
\end{example}

It is interesting that among the $\Pq$ simplices, we can find examples that are ``near the boundary'' of both $h^*$-unimodality and IDP using reflexive stabilizations.
One sequence of examples is the following.
Recall that for a lattice polytope $P$, the \emph{Hilbert basis} of $\cone{P}$ is the minimal generating set of the monoid $\cone{P}\cap \Z^{n+1}$.
Thus, $P$ is IDP if and only if the Hilbert basis of $\cone{P}$ consists of the elements at height $1$ in $\cone{P}$, i.e. $(1,P)\cap \Z^{n+1}$.

\begin{theorem}\label{thm:almostidpunimodal}
  For $n\geq 1$, define $\br(n)=(1, 3n, 10n, 15n)$ and $\bx(n)=(2n-1,1,1,1)$.
  Thus, $\bq(n):=(\br(n),\bx(n))=\rs((3n,10n,15n))$.
  For $\bq=(\br(n),\bx(n))$, let $V(n)=\{(1,\bv):\bv\text{ a vertex of }\Pq\}$.
  The Hilbert basis for $\cone{\Pq}$ consists of $V(n)$ and the columns of the following matrix (where the height coordinate is the first entry):
  \[
    \begin{bmatrix}
      1 & 1&1 & 1&1 & 1&1 & 2&2\\
      0&0&0&0&0&0&0&0&0\\
      \vdots &\vdots &\vdots &\vdots &\vdots &\vdots &\vdots &\vdots &\vdots \\
      0&0&0&0&0&0&0&0&0\\
      0 & 0 & 0 &0 & 0 & 0 & -1 & -2& -3\\
      0 & 0 & -1&-1&-2&-3&-4&-7&-10\\
      0 & -1 &-1&-2&-3&-5&-6&-10&-15\\
    \end{bmatrix}
  \]
  Thus, there are exactly two elements in the Hilbert basis of height greater than $1$, both of which are at height $2$.
  For $n\geq 2$ with $\bq=(\br(n),\bx(n))$, we have
  \[
    h^*(\Pq;z)=(1+z^2+z^4+z^6+\cdots+z^{2n-2})\cdot (1+7z+14z^2+7z^3+z^4) \, ,
  \]
  which has coefficient vector
  \[
  (1, 7, 15, 14, 16, 14,16,14,\ldots, 16, 14, 16, 14, 15, 7, 1)\, .
  \]
\end{theorem}

\begin{proof}
  We first prove that the $h^*$-polynomial is correct.
  Fix $n\geq 2$ and set $\bq=(\br(n),\bx(n))$.
  Since $\lcm \bq=30n$, it follows from Theorem~\ref{thm:hstarq} that for $0\leq b<30n$
  \begin{align*}
    w(\rs(\bq),b):=&b-(2n-1)\left\lfloor\frac{b}{30n}\right\rfloor-\left\lfloor\frac{3n\cdot b}{30n}\right\rfloor-\left\lfloor\frac{10n\cdot b}{30n}\right\rfloor-\left\lfloor\frac{15n\cdot b}{30n}\right\rfloor \\
    = & \, b-\left\lfloor\frac{b}{10}\right\rfloor-\left\lfloor\frac{b}{3}\right\rfloor-\left\lfloor\frac{b}{2}\right\rfloor\,.
  \end{align*}
  Let $b=30\alpha+\beta$, with $0\leq \alpha<n$ and $0\leq\beta<30.$
  Then 
  \begin{align*}
    w(\rs(\bq),b):=&30\alpha+\beta-\left\lfloor\frac{30\alpha+\beta}{10}\right\rfloor-\left\lfloor\frac{30\alpha+\beta}{3}\right\rfloor-\left\lfloor\frac{30\alpha+\beta}{2}\right\rfloor \\
    =&\, 2\alpha+\beta-\left\lfloor\frac{\beta}{10}\right\rfloor-\left\lfloor\frac{\beta}{3}\right\rfloor-\left\lfloor\frac{\beta}{2}\right\rfloor.
  \end{align*}
  This implies that
  \[
    \sum_{0\leq b<30n}z^{w(b)}=\left(\sum_{0\leq \alpha<n}z^{2\alpha}\right)\left(\sum_{0\leq \beta<30}z^{\beta-\left\lfloor\frac{\beta}{10}\right\rfloor-\left\lfloor\frac{\beta}{3}\right\rfloor-\left\lfloor\frac{\beta}{2}\right\rfloor}\right)
  \]
  which evaluates to
  \[
    (1+z^2+z^4+z^6+\cdots+z^{2n-2})\cdot (1+7z+14z^2+7z^3+z^4) \, .
  \]
  
  We next prove our claim regarding the Hilbert basis for $\Pq$.
  For each $0\leq b<30n$, there exist unique values $0\leq \alpha, \beta,\gamma<30n$ such that the following linear combination of ray generators for $\Pq$ produces an element in the fundamental parallelepiped for the cone over $\Pq$:
  \[
          \bp(b):=
    \left[\begin{array}{ccccccc}
            1& 0 & 0 & \cdots & 0 & 0 & -1\\
            0 & 1 & 0 & \cdots & 0 & 0 & -1\\
            \vdots & \ddots & \ddots & \ddots & \vdots & \vdots & \vdots\\
            0 & 0 & 0 & \ddots & 0 & 0 & -3n\\
            0 & 0 & 0 & \cdots & 1 & 0 & -10n\\
            0 & 0 & 0 & \cdots & 0 & 1 & -15n\\
            1 & 1 & 1 & \cdots & 1 & 1 & 1             
            \end{array}
          \right]
          \left[\begin{array}{c}
                  b/30n\\
                  b/30n\\
                  \vdots\\
                  \alpha/30n\\
                  \beta/30n\\
                  \gamma/30n\\
                  b/30n
            \end{array}
          \right]\in \Z^{2(n+1)}
  \]
  From the final four entries of the resulting vector, we observe that each of the following values must be an integer:
  \[
\frac{\alpha-3bn}{30n}, \, \frac{\beta-10bn}{30n},\, \frac{\gamma-15bn}{30n},\,  \frac{2nb+\alpha+\beta+\gamma}{30n}
    \]
    To determine $\alpha$, we write $b=10p_{10}+r_{10}$ where $0\leq r_{10}<10$ and evaluate in the first expression, yielding
    \[
\frac{\alpha-30p_{10}n-3r_{10}n}{30n}\in \Z
\]
from which it follows that $\alpha=3nr_{10}=3n(b\bmod 10)$ and
\[
\frac{\alpha-30p_{10}n-3r_{10}n}{30n}=p_{10}=\lfloor b/10\rfloor \, .
\]
Similarly, it can be shown (by expressing $b=3p_{3}+r_{3}$) that $\beta=10n(b\bmod 3)$ with
\[
\frac{\beta-10bn}{30n}=p_{3}=\lfloor b/3\rfloor \, .
\]
Similarly, $\gamma=15n(b\bmod 2)$ and
\[
\frac{\gamma-15bn}{30n}=\lfloor b/2\rfloor \, .
\]
Thus, we have that
\[
  \bp(b)=\left[
    \begin{array}{c}
                  0\\
                  \vdots\\
                  0\\
                  -\lfloor b/10\rfloor \\
                  -\lfloor b/3\rfloor\\
                  -\lfloor b/2\rfloor\\
                  ( 2b+3(b\bmod 10)+10(b\bmod 3) + 15(b\bmod 2) )/30n
             \end{array}
             \right]\, .
  \]
  We now show that we can reduce $\bp(b)$ for $30<b< 30n$ to a case where $0\leq b\leq 30$.
  If $30<b$, then
  \begin{align*}
\bp(b)-\bp(30)=& \left[
    \begin{array}{c}
                  0\\
                  \vdots\\
                  0\\
                  -\lfloor b/10\rfloor +3\\
                  -\lfloor b/3\rfloor+ 10\\
                  -\lfloor b/2\rfloor+15\\
                  ( 2b+3(b\bmod 10)+10(b\bmod 3) + 15(b\bmod 2) )/30n -2
             \end{array}
           \right]\\
           =& 
           \left[
    \begin{array}{c}
                  0\\
                  \vdots\\
                  0\\
                  -\lfloor (b-30)/10\rfloor \\
                  -\lfloor (b-30)/3\rfloor\\
                  -\lfloor (b-30)/2\rfloor\\
                  ( 2b+3((b-30)\bmod 10)+10((b-30)\bmod 3) + 15((b-30)b\bmod 2) )/30n
             \end{array}
           \right]\\
           =&     \bp(b-30) \, .
    \end{align*}
    Thus, every lattice point in $\Pq$ is a sum of elements from $\{\bp(b):0\leq b\leq 30\}$.
    It is straightforward but tedious to check (either by hand or via computer algebra system) that the minimal additive generators of this set are precisely the the columns of the matrix in the theorem statement.
    Thus, our proof is complete.
\end{proof}

While reflexive stabilizations are one way to obtain elements of $\sQ$ that exhibit extremal behavior for heights of Hilbert basis elements and $h^*$-unimodality, the following example demonstrates that this phenomenon occurs outside of reflexive stabilizations as well.

\begin{example}\label{ex:crazyhilbert}

 for $n\geq 2$, let
\[
  \bq=((n,(2n-1)(n+1),2n(n+1)),(1,1,2(n-1))) \, .
  \]
For $n\leq 20$, it has been verified that
  \begin{align*}
    h^*(\Pq;z)=(1, (n+1)^2, &(2n+1)(n+1)+1,(2n+1)(n+1), (2n+1)(n+1)+1,\ldots ,\\  &(2n+1)(n+1)+1,(2n+1)(n+1), (2n+1)(n+1)+1, (n+1)^2, 1) 
  \end{align*}
  and that the Hilbert basis of $\cone{\Pq}$ consists of the points $(1,\Pq)\cap \Z^{2n+1}$ together with the following the lattice point at height two (as given by the first coordinate):
  \[
    (2,-1,-2n-1,-2(n-1),-2(n-1),-2(n-1),\ldots,-2(n-1))^T \, .
  \]
  Thus, this family of simplices is another example of polytopes on the boundary of both IDP and $h^*$-unimodality. These simplices are more arithmetically complicated than those given in Theorem~\ref{thm:almostidpunimodal}.
\end{example}

\section{Proof of Theorem~\ref{thm:3suppclassify}}
\label{sec:proofs}

\subsection{Proof of derivation of $(i)$-$(viii)$ in Theorem~\ref{thm:3suppclassify}}
We first suppose that $(\br,\bx)$ satisfies the necessity condition, and show that the resulting $\br$-vectors must be of one of the eight types listed.
Since $r_1 < r_2 < r_3$, reducing modulo $r_3$ gives
$
r_3=1+r_1x_1+r_2x_2 \, .
$
If we next consider the modulo $r_2$ necessary condition, then substituting the above for $r_3$ and simplifying gives
$
r_2=1+x_1r_1+x_3((1+r_1x_1)\bmod r_2) \, .
$
The challenge here is that we would like to specify $r_2$ using this formula, but it involves a remainder which could fluctuate.
The key observation is to recall that if the necessary condition for IDP holds, then Theorem~\ref{thm:xIDPrestriction} implies $x_1\leq r_2/r_1$.
Thus, we have
\begin{equation}
  1+x_1r_1\leq 1+(r_2/r_1)r_1=1+r_2 \, .\label{eqn:3supcases}
\end{equation}
There are now three cases to consider.

\textbf{Case 1:} Suppose we have equality in~\eqref{eqn:3supcases}.
		It is immediate that in this case
		\[
			(1+r_1x_1)\bmod r_2=(1+r_2)\bmod r_2=1\, ,
		\]
		and thus
		$
			r_2=1+x_1r_1+x_3 =r_2+1+x_3 \, .
		$
		As $x_3\geq 1$, this yields a contradiction, and thus this case does not occur. \\

\textbf{Case 2:} Consider if $1+x_1r_1= r_2$ in~\eqref{eqn:3supcases}.
		Then 
		\[
			(1+r_1x_1)\bmod r_2=r_2\bmod r_2=0\, .
		\]
		Thus, we have
		$
			(\br,\bx)=((r_1,1+x_1r_1,r_3),(x_1,x_2,x_3)) \, .
		$
		However, we know that
		\[
			r_3=1+r_1x_1+r_2x_2=(1+x_1r_1)(1+x_2) \, ,
		\]
		and thus
		$
			(\br,\bx)=((r_1,1+x_1r_1,(1+x_1r_1)(1+x_2)),(x_1,x_2,x_3)) \, .
		$
		If $r_1=1$, then the result is
		$
			(\br,\bx)=((1,1+x_1,(1+x_1)(1+x_2)),(x_1,x_2,x_3)) \, .
		$
		which corresponds to $(i)$ in our theorem statement.
		If $r_1\geq 2$, we consider our necessary condition modulo $r_1$ and obtain
		$
			r_1 = 1 + x_2 + x_3((1+x_2) \bmod r_1) \, .
		$
		Since (by this equality) we have $1+x_2\leq r_1$, it follows that there are two subcases. 

\textbf{Subcase 2.1:}
			Suppose $1+x_2=r_1$.
			Then our vector is 
			\[
				(\br,\bx)=((1+x_2,1+x_1(1+x_2),(1+x_1(1+x_2))(1+x_2)),(x_1,x_2,x_3)) \, ,
			\]
			yielding $(ii)$ in our theorem statement. \\
		
\textbf{Subcase 2.2:}
			Suppose $1+x_2<r_1$.
			Then 
			\[\begin{aligned}
				r_1 &= 1 + x_2 + x_3((1+x_2) \bmod r_1) = 1 + x_2 + x_3(1+x_2) =(1+x_2)(1+x_3)\, .
			\end{aligned}\]
			Then our vector is 
			\[\begin{aligned}
				(\br,\bx)= & (((1+x_2)(1+x_3),\\
					& \quad 1+x_1(1+x_2)(1+x_3),\\
					& \quad (1+x_1(1+x_2)(1+x_3))(1+x_2)),(x_1,x_2,x_3)) \, ,
			\end{aligned}\]
			yielding $(iii)$ in our theorem statement.

\textbf{Case 3:}
		If $1+x_1r_1< r_2$ in~\eqref{eqn:3supcases}, it is immediate that
		$
			(1+r_1x_1)\bmod r_2=1+r_1x_1\, ,
		$
		and thus
		$
			r_2=1+x_1r_1+(1+r_1x_1)x_3 =(1+x_3)(1+x_1r_1)\, .
		$
		Combining this with
		\[\begin{aligned}
			r_3&=1+r_1x_1+r_2x_2 =1+r_1x_1+(1+x_3)(1+x_1r_1)x_2 =(1+r_1x_1)(1+(1+x_3)x_2) \, ,
		\end{aligned}\]
		we obtain that
		\[
			(\br,\bx)=((r_1,(1+x_3)(1+x_1r_1),(1+r_1x_1)(1+(1+x_3)x_2)),(x_1,x_2,x_3)) \, ,
		\]
		which is a function of the multiplicities and the value $r_1$.
		If $r_1=1$, then we obtain
		\[
			(1,(1+x_1)(1+x_3),(1+x_1)(1+x_2(1+x_3)))
		\]
		which corresponds to item $(iv)$ in our theorem statement.
		If $r_1\geq 2$, we again consider the necessary condition modulo $r_1$, for which we obtain
		\begin{align*}
			r_1&=1+x_2((1+x_3)(1+x_1r_1) \bmod r_1)+x_3((1+r_1x_1)(1+(1+x_3)x_2)\bmod r_1)\\
			& = 1+x_2((1+x_3)\bmod r_1)+x_3((1+(1+x_3)x_2) \bmod r_1).
		\end{align*}
		We now have three subcases to consider.

\textbf{Subcase 3.1:}
		If $1+x_3<r_1$ then
		\[
			r_1=1+x_2(1+x_3)+x_3((1+(1+x_3)x_2) \bmod r_1) \, .
		\]
		This requires two subsubcases.

\textbf{Subsubcase 3.1.1:}
				Suppose $(1+(1+x_3)x_2) = r_1$.
				Then $x_3$ is arbitrary, and we have
				\[\begin{aligned}
					(\br,\bx)=&((1+(1+x_3)x_2,(1+x_3)(1+x_1(1+(1+x_3)x_2)), \\
						& \quad (1+(1+(1+x_3)x_2)x_1)(1+(1+x_3)x_2)),(x_1,x_2,x_3)) \, ,
				\end{aligned}\]
				establishing item $(v)$ in the theorem statement. 

			\textbf{Subsubcase 3.1.2:}
				Suppose $(1+(1+x_3)x_2) < r_1$.
				Then
				\[
					r_1=1+x_2(1+x_3)+x_3(1+(1+x_3)x_2) =(1+x_3)(1+(1+x_3)x_2)\, ,
				\]
				and thus it follows that for $\bx=(x_1,x_2,x_3)$, we have
				\[\begin{aligned}
					\br=&((1+x_3)(1+(1+x_3)x_2),\\
						&\quad(1+x_3)(1+x_1(1+x_3)(1+(1+x_3)x_2)),\\
						&\quad (1+(1+x_3)(1+(1+x_3)x_2)x_1)(1+(1+x_3)x_2)) \, ,
				\end{aligned}\]
				establishing item $(vi)$ in the theorem statement.
		
\textbf{Subcase 3.2:}
		Suppose $1+x_3= r_1$.
		Then for $\bx=(x_1,x_2,x_3)$ we have
		\[
			\br = (1+x_3,(1+x_3)(1+x_1(1+x_3)),(1+(1+x_3)x_1)(1+(1+x_3)x_2)) \, ,
		\]
		establishing item $(vii)$ in the theorem statement. \\
    
\textbf{Subcase 3.3:}
		Suppose $1+x_3> r_1$.
		If $r_1\mid (1+x_3)$, then we have
		\[
			r_1 = 1+x_2((1+x_3)\bmod r_1)+x_3((1+(1+x_3)x_2) \bmod r_1) = 1+x_3 \, .
		\]
		This is a contradiction, and thus it follows that $r_1\nmid 1+x_3$.
		If $r_1\mid x_2$, then
		\[
		r_1=1+x_2((1+x_3)\bmod r_1)+x_3\, ,
		\]
		which implies that $r_1\mid (1+x_3)$, again a contradiction.
		Thus, we must have that $r_1\nmid x_2$, and since $r_1\nmid (1+x_3)$ we also know $r_1>x_2$ since
		\[
			r_1 = 1+x_2((1+x_3)\bmod r_1)+x_3((1+(1+x_3)x_2) \bmod r_1) >x_2 \, .
		\]
		We now consider two subsubcases.

\textbf{Subsubcase 3.3.1:}
				Suppose $r_1\mid (1+x_2(1+x_3))$.
				Then
				\[
 					r_1 = 1+x_2((1+x_3)\bmod r_1) \, .
				\]
				Thus, there exists some $k\geq 1$ where $r_1=1+x_2k$, and we are forced to have
				\[
					r_1 =1+x_2k= 1+x_2((1+x_3)\bmod r_1) \, 
				\]
				implying that $k=(1+x_3)\bmod 1+x_2k$.
				Thus, for some $s\geq 1$ we set
				\[
					x_3=skx_2+s-k+1 
				\]
				and we have the case
				\[\begin{aligned}
					\br =& (1+kx_2,(2+skx_2+s-k)(1+x_1(1+kx_2)),\\
						& \quad (1+x_1(1+kx_2))(1+x_2(2+skx_2+s-k)))
				\end{aligned}\]
				with
				\[
					\bx = (x_1,x_2,skx_2+s-k+1) \, ,
				\]
				corresponding to item $(viii)$ in our theorem statement. \\
				
\textbf{Subsubcase 3.3.2:}
				Suppose $r_1\nmid (1+x_2(1+x_3))$, so we have
				\[
					r_1 = 1+x_2((1+x_3)\bmod r_1)+x_3((1+(1+x_3)x_2) \bmod r_1) \, .
				\]
				We also have that $1+x_3>r_1>x_2$.
				However, $r_1\nmid (1+x_2(1+x_3))$ implies that 
				\[
					r_1 = 1+x_2((1+x_3)\bmod r_1)+x_3((1+(1+x_3)x_2) \bmod r_1) \geq 1+x_3\, ,
				\]
				yielding a contradiction. 
				
This completes our analysis of possible cases based on the necessary condition.
In particular, all of the types listed yield reflexive simplices.

\subsection{Proof of IDP for types $(i)$, $(ii)$, and $(iv)$ and of non-IDP for type $(viii)$ in Theorem~\ref{thm:3suppclassify}}
We now show that type $(viii)$ $\br$-vectors yield non-IDP simplices, where we apply Theorem~\ref{thm:idpreflexives} and the notation therein.
Let $q_j=r_2=(skx_2+s+k)(1+x_1(1+kx_2))$, and set $b=skx_2+s+k$ which is strictly less than $r_2$ as needed for Theorem~\ref{thm:idpreflexives}.
It is tedious but straightforward to reduce the left-hand side of~\eqref{eqn 1} to the form $x_3+(x_3-1)x_2(skx_2+s+k)$.
Note that by the assumption of type $(viii)$, we have
\[
x_3=s+1+k(sx_2-1)\geq 2
\]
and thus 
\[
x_3+(x_3-1)x_2(skx_2+s+k)\geq 2+x_2(skx_2+s+k)\geq 2 \, ,
\]
satisfying~\eqref{eqn 1}.
Setting $q_i=r_3$, we now ask if there is a solution $0<c<b$ satisfying~\eqref{eqn 2} and~\eqref{eqn 3}.
Using the fact that we must have $0<c<b=skx_2+s+k$, we obtain that the left-hand side of~\ref{eqn 2} is:
\begin{align*}
& \left\lfloor \frac{(skx_2+s+k)(1+x_1(1+kx_2))(1+x_2(skx_2+s+k))}{(skx_2+s+k)(1+x_1(1+kx_2))}\right\rfloor -  \left\lfloor \frac{c(1+x_2(skx_2+s+k))}{skx_2+s+k}\right\rfloor \\
  = & \, 1+x_2(skx_2+s+k) -cx_2 - \left\lfloor \frac{c}{skx_2+s+k}\right\rfloor \\
  = & \, 1+x_2(skx_2+s+k-c)
\end{align*}
On the other hand, using $0<c<b=skx_2+s+k$, we find that the right-hand side of~\ref{eqn 2} is:
\begin{align*}
& \left\lfloor \frac{(skx_2+s+k-c)(1+x_2(skx_2+s+k))}{(skx_2+s+k)}\right\rfloor \\
  = & \, x_2(skx_2+s+k-c) + \left\lfloor 1-\frac{c}{skx_2+s+k}\right\rfloor \\
  = & \, x_2(skx_2+s+k-c)
\end{align*}
Thus, there is no $c$ value in this range that satisfies~\eqref{eqn 2}, and hence we find that simplices of type $(viii)$ are not IDP.

We next show that simplices of types $(i)$-$(vii)$ are IDP.
Types $(i)$, $(ii)$, and $(iv)$ all follow from affine free sum decompositions as follows.
For $(i)$, observe that
\[
\br=(1^{x_1})*_0(1^{x_2})*_0(1^{x_3}) \, ,
\]
and thus Theorem~\ref{thm:freesumdecomp} applies.
For $(ii)$, observe that
\[
\br=((1+x_1)^{x_2},(1+x_1(1+x_2))^{x_2})*_0(1^{x_3}) \, ,
  \]
and thus Theorem~\ref{thm:2suppclassify} and Theorem~\ref{thm:freesumdecomp} apply to finish this case.
  Finally, for $(iv)$ observe that
  \[
\br=(1^{x_3})*_0((1+x_2)^{x_3},(1+x_2(1+x_3))^{x_3}) \, ,
\]
and thus again Theorem~\ref{thm:2suppclassify} and Theorem~\ref{thm:freesumdecomp} apply to finish this case.

Types $(iii)$, $(v)$, $(vi)$, and $(vii)$ do not follow from affine free sum decompositions, and thus we must use Theorem~\ref{thm:idpreflexives} directly.
 Throughout the remainder of this proof, we use the notation
  \[
    h(b):=b\left(\Frac{1+\sum_{i\neq j}q_i}{q_j} \right) - \sum_{i\neq j}\left\lfloor \Frac{bq_i}{q_j} \right\rfloor 
  \]
  to denote the left-hand side of~\eqref{eqn 1}.

\subsection{Proof of IDP for type $(iii)$ in Theorem~\ref{thm:3suppclassify}}
    
  We next verify IDP for $\br$-vectors of type $(iii)$ using Theorem~\ref{thm:idpreflexives}.
  We must consider three cases corresponding to three possible values of $q_j$.
  
  \textbf{Case: $q_j=(1+x_2)(1+x_3)$}. 
  It is straightforward to verify that 
  \[
    h(b)=b-x_3\left\lfloor \frac{b}{(1+x_3)}\right\rfloor \, ,
  \]
  and using this formula one can check that
  \[
    h(k(1+x_3))=k \, .
  \]
  Combining these two observations, it follows that $h(b)=1$ only when $b=1$ and $b=(1+x_3)$, thus identifying the $b$-values we are required to check in \eqref{eqn 1}.
  To verify that~\eqref{eqn 2} always has the desired solution, we consider three cases.
  If $q_i=(1+x_2)(1+x_3)$, the result is trivial.
  If $q_i=x_1(1+x_2)(1+x_3)+1$, then we may select $c=1$, from which it follows that both sides of~\eqref{eqn 2} are equal to $x_1(b-1)$.
  If $q_i=(1+x_2)(x_1(1+x_2)(1+x_3)+1)$, then we set $c=(1+x_3)$, from which it is straightforward to compute that both sides of~\eqref{eqn 2} are equal to $b(1+x_2)x_1-x_1(1+x_2)(1+x_3)-1+\lfloor b/(1+x_3) \rfloor$.
  This completes our first case.

  \textbf{Case: $q_j=x_1(1+x_2)(1+x_3)+1$}.
  It is straightforward to verify that 
  \[
    h(b)=b-x_1\left\lfloor \frac{b(1+x_2)(1+x_3)}{x_1(1+x_2)(1+x_3)+1} \right\rfloor \, ,
  \]
  where the values of $b$ range from $1$ to $x_1(1+x_2)(1+x_3)$.
  To verify that~\eqref{eqn 2} always has the desired solution, we consider three cases.
  If $q_i=x_1(1+x_2)(1+x_3)+1$, the result is trivial.
  If $q_i=(1+x_2)(1+x_3)$, then we write $b = \alpha x_1 + \beta$, where $0\leq \beta < x_1$ and $0\leq \alpha \leq (1+x_2)(1+x_3)$ for $\alpha,\beta \in \Z$.
  Consequently, we have
  \begin{align*}
    \left\lfloor \frac{b(1+x_2)(1+x_3)}{x_1(1+x_2)(1+x_3) + 1}\right\rfloor &=  \left\lfloor \frac{(\alpha x_1 + \beta)(1+x_2)(1+x_3)}{x_1(1+x_2)(1+x_3) + 1}\right\rfloor \\
    &= \alpha +  \left\lfloor \frac{\beta(1+x_2)(1+x_3) - \alpha}{x_1(1+x_2)(1+x_3) + 1}\right\rfloor \\
    &= \alpha + \left. \begin{cases}\phantom{-}0, & \beta > 0 \\ -1, & \beta = 0 \end{cases} \right\} .
  \end{align*}
  In the case that $\beta>0$, we have that $h(b) = b - x_1\alpha = \beta$.
  Hence, the viable candidates for $c$-values from 1 to $x_1(1+x_2)(1+x_3)$ that satisfy equation~\eqref{eqn 3} are precisely those $c$ such that $c\equiv 1\bmod x_1$. 
  Therefore, the $b$-values we are required to check in~\eqref{eqn 1} are all $b = \alpha x_1 + \beta$, where $2\leq \beta \leq x_1-1$. 
  In this case, we may choose $c = \alpha x_1 + 1$, from which it follows that both sides of~\eqref{eqn 2} are equal to 0, giving the desired result.
  On the other hand, if $\beta = 0$, then we have that $h(b) = h(\alpha x_1) = \alpha x_1 - x_1(\alpha - 1) = x_1$. If $x_1=1$, then $h(b) = 1$.
  Thus, we need only consider when $x_1 > 1$. 
  In order to satisfy~\eqref{eqn 1}, it must be that $\alpha > 0$.
  Given that $x_1 > 1$ and $\alpha >0$, it is straightforward to verify that both sides of~\eqref{eqn 2} when $c=1$ are equal to $\alpha - 1$. 
  Finally, if $q_i=(1+x_2)(x_1(1+x_2)(1+x_3)+1)$, then we can set $c=1$ and the result is immediate.
  This completes our second case.

  \textbf{Case: $q_j=(1+x_2)(x_1(1+x_2)(1+x_3)+1)$}.
  We first identify those values of $b$ that satisfy~\eqref{eqn 1} and~\eqref{eqn 3}.
  It is straightforward to verify that
  \[
    h(b)=b-x_1\left\lfloor \frac{b(1+x_3)}{x_1(1+x_2)(1+x_3)+1}\right\rfloor -((1+x_2)-1)\left\lfloor \frac{b}{(1+x_2)} \right\rfloor \, .
  \]
  Writing $b=m(1+x_2)+t$ where $0\leq m\leq x_1(1+x_2)(1+x_3)$ and $0\leq t\leq (1+x_2)$, it follows that
  \[
    h(b)=h(m(1+x_2)+t)=m+t-x_1\left\lfloor \frac{m(1+x_2)(1+x_3)+t (1+x_3)}{x_1(1+x_2)(1+x_3)+1} \right \rfloor \, .
  \]
  We can now further divide into cases: either we have $m=x_1(1+x_2)(1+x_3)$ or we have $m=kx_1+w$ where $0\leq k<(1+x_2)(1+x_3)$ and $0\leq w<x_1$, which yields
  \[
    h(b)=h((kx_1+w)(1+x_2)+t)=w+t-x_1\left\lfloor \frac{w(1+x_2)(1+x_3)+t (1+x_3) -k}{x_1(1+x_2)(1+x_3)+1}\right\rfloor \, .
  \]
  For $m\neq x_1(1+x_2)(1+x_3)$, observe that since $0\leq w(1+x_2)(1+x_3)\leq x_1-1$ and $0\leq t (1+x_3)<(1+x_2)(1+x_3)$, with $0\leq k<(1+x_2)(1+x_3)$, we have that $0\leq w(1+x_2)(1+x_3)+t (1+x_3)<x_1(1+x_2)(1+x_3)$.
  Thus, $\left\lfloor \frac{w(1+x_2)(1+x_3)+t (1+x_3) -k}{x_1(1+x_2)(1+x_3)+1}\right\rfloor$ is equal to either $0$ or $-1$.

  \textbf{Subcase 1 of 3:} Suppose $m=x_1(1+x_2)(1+x_3)$.
  Since $0\leq t (1+x_3)<(1+x_2)(1+x_3)$, we have
  \[
    h((1+x_2)(1+x_3)x_1(1+x_2)+t)=t-x_1\left\lfloor \frac{t (1+x_3)-(1+x_2)(1+x_3)}{x_1(1+x_2)(1+x_3)+1}\right\rfloor = t+x_1.
  \]
  If this is equal to $1$, then it must be that $t=0$ and $x_1=1$.
  Thus, if $x_1=1$, we have that $h((1+x_2)(1+x_3)(1+x_2))=1$.

  \textbf{Subcase 2 of 3:} Suppose now that $m\neq x_1(1+x_2)(1+x_3)$ and that
  \[
    \left\lfloor \frac{w(1+x_2)(1+x_3)+t (1+x_3) -k}{x_1(1+x_2)(1+x_3)+1}\right\rfloor = -1 \, .
  \]
  Then since $w,t\geq 0$ and $x_1\geq 1$, we have $h(b)=w+t +x_1=1$ which forces $w=t=0$ and $x_1=1$.
  In this case, $h(k(1+x_2))=1$ any time that $k>0$.
  Thus, if $x_1=1$, we have that $h(k(1+x_2))=1$ when $0<k<(1+x_2)(1+x_3)$.

  \textbf{Subcase 3 of 3:} Suppose again that $m\neq x_1(1+x_2)(1+x_3)$ and that
  \[
    \left\lfloor \frac{w(1+x_2)(1+x_3)+t (1+x_3) -k}{x_1(1+x_2)(1+x_3)+1}\right\rfloor = 0 \, .
  \]
  Then $0\leq k\leq w(1+x_2)(1+x_3)+t (1+x_3)$, which implies that either (A) $0<w<x_1$ with $0\leq t <(1+x_2)$ or (B) $w=0$ with $k\leq t (1+x_3)$.
  If (A) holds, then $h(b)=w+t=1$ forces $w=1$ and $t=0$ since $w>0$, which means that $h(b)=1$ when $b=(kx_1+1)(1+x_2)$ for $0\leq k <(1+x_2)(1+x_3)$.
  If (B) holds, then our same equation forces $w=0$ and $t=1$ when $k\leq (1+x_3)$, which means that $h(b)=1$ when $b=kx_1(1+x_2)+1$ for $0\leq k<(1+x_2)(1+x_3)$ and $k\leq (1+x_3)$.

  We summarize the values of $b$ for which $h(b)=1$ that were just derived:
  \begin{itemize}
  \item If $x_1=1$ and $0<k\leq (1+x_2)(1+x_3)$ we have $b=k(1+x_2)$. 
  \item If $x_1\geq 1$ and $0\leq k<(1+x_2)(1+x_3)$, we have $b=(kx_1+1)(1+x_2)$.
  \item If $x_1\geq 1$ and $0\leq k<(1+x_2)(1+x_3)$ and $k\leq (1+x_3)$, we have $b=kx_1(1+x_2)+1$.
  \end{itemize}

  Our next goal is to establish that~\eqref{eqn 2} is always satisfied; recall that we are in the case where $q_j=(1+x_2)(x_1(1+x_2)(1+x_3)+1)$.
  If $q_i=(1+x_2)(x_1(1+x_2)(1+x_3)+1)$, then~\eqref{eqn 2} is trivially satisfied.
  If $q_i=x_1(1+x_2)(1+x_3)+1$, we write $b=m(1+x_2)+t$ where $0\leq m<x_1(1+x_2)(1+x_3)+1$ and $0\leq t<(1+x_2)$.
  Substituting this form of $b$ into~\eqref{eqn 2} yields the equation
  \[
    -\left\lfloor \frac{c}{(1+x_2)}\right\rfloor =\left\lfloor \frac{t -c}{(1+x_2)}\right\rfloor \, .
  \]
  If $b>(1+x_2)$, we set $c=(1+x_2)$ and the equation is satisfied.
  If $2<b<(1+x_2)$, then we set $c=1$ and the equation is satisfied.

  If $q_i=(1+x_2)(1+x_3)$, the analysis becomes more complicated.
  We write $b=m(1+x_2)+t$ where $0\leq m<d$ and $0\leq t <(1+x_2)$.
  Our argument will proceed by considering $x_1=1$ and $x_1>1$ separately.

  Suppose $x_1=1$.
  Then the left-hand-side of~\eqref{eqn 2} is reduced to
  \[
    \left\lfloor \frac{t (1+x_3)-m}{(1+x_2)(1+x_3)+1}\right \rfloor - \left \lfloor \frac{c(1+x_3)}{(1+x_2)(1+x_3)+1}\right \rfloor
  \]
  and the right-hand-side to
  \[
    \left\lfloor \frac{t (1+x_3)-m-c(1+x_3)}{(1+x_2)(1+x_3)+1}\right \rfloor \, .
  \]
  Since $m<(1+x_3)$, if $t (1+x_3)-m<0$ this forces $t=0$ and $0<m$, thus $b$ is a multiple of $(1+x_2)$, and we found earlier that $h(m(1+x_2))=1$.
  Thus, we need proceed no further.
  If $t (1+x_3)-m\geq 0$, then since $m<(1+x_3)$ we must have $t\geq 1$, and we also have $t (1+x_3)-m<(1+x_2)(1+x_3)$.
  Thus, $\left\lfloor \frac{t (1+x_3) -m}{(1+x_2)(1+x_3)+1}\right\rfloor =0$, from which it follows that~\eqref{eqn 2} reduces to
  \[
    -\left\lfloor \frac{c(1+x_3)}{(1+x_2)(1+x_3)+1}\right\rfloor = \left\lfloor \frac{t (1+x_3) -m -c(1+x_3)}{(1+x_2)(1+x_3)+1}\right\rfloor \, .
  \]
  If $m=0$, set $c=1$ and this equation is solved.
  If $m\geq 1$, set $c=m(1+x_2)+1$ which is less than $b$ in this case, and this equation is again satisfied.
  This completes our proof for $x_1=1$.

  We next consider when $x_1\geq 2$, maintaining our previous notation of $b=m(1+x_2)+t$.
  Write $m=fx_1+g$ where $0\leq f\leq \lfloor (1+x_3)/x_1\rfloor$ and $0\leq g<x_1$ except in the case where $f=\lfloor (1+x_3)/x_1\rfloor$ in which case $g$ is bounded above by $(1+x_3)-(1+x_3)\lfloor (1+x_3)/x_1\rfloor$.
  This leads to the left-hand-side of~\eqref{eqn 2} having the form
  \[
    f+\left\lfloor \frac{g(1+x_2)(1+x_3)+t (1+x_3) -f}{x_1(1+x_2)(1+x_3)+1}\right\rfloor -\left\lfloor \frac{c(1+x_3)}{x_1(1+x_2)(1+x_3)+1}\right\rfloor 
  \]
  while the right-hand-side has the form
  \[
    f+\left\lfloor \frac{g(1+x_2)(1+x_3)+t (1+x_3) -f-c(1+x_3)}{x_1(1+x_2)(1+x_3)+1}\right\rfloor \, .
  \]
  We thus need to solve the equation
  \begin{align*}
    & \, \left\lfloor \frac{g(1+x_2)(1+x_3)+t (1+x_3) -f}{x_1(1+x_2)(1+x_3)+1}\right\rfloor -\left\lfloor \frac{c(1+x_3)}{x_1(1+x_2)(1+x_3)+1}\right\rfloor \\
    = & \, \left\lfloor \frac{g(1+x_2)(1+x_3)+t (1+x_3) -f-c(1+x_3)}{x_1(1+x_2)(1+x_3)+1}\right\rfloor
  \end{align*}
  subject to the constraints $0\leq g<x_1$ (with the exception mentioned above), $0\leq t \leq (1+x_2)$, and $0 \leq f\leq \lfloor (1+x_3)/x_1\rfloor$.
  Note that the first two inequalities imply that $0\leq g(1+x_2)(1+x_3)+t (1+x_3)<x_1(1+x_2)(1+x_3)$, and also $f\leq \lfloor (1+x_3)/x_1\rfloor \leq (1+x_2)(1+x_3)$, hence 
  \[
    \left\lfloor \frac{g(1+x_2)(1+x_3)+t (1+x_3) -f}{x_1(1+x_2)(1+x_3)+1}\right\rfloor = 
    \left\{
      \begin{array}{ll}
        0 & \text{ if } g(1+x_2)(1+x_3)+t (1+x_3)\geq f \\
        -1 & \text{ if } g(1+x_2)(1+x_3)+t (1+x_3) < f
      \end{array}
    \right.
  \]

  \textbf{Subcase 1 of 2:} 
  Suppose $g(1+x_2)(1+x_3)+t (1+x_3)-f\geq 0$.
  Then~\eqref{eqn 2} reduces to 
  \[
    -\left\lfloor \frac{c(1+x_3)}{x_1(1+x_2)(1+x_3)+1}\right\rfloor = \left\lfloor \frac{g(1+x_2)(1+x_3)+t (1+x_3) -f-c(1+x_3)}{x_1(1+x_2)(1+x_3)+1}\right\rfloor \, .
  \]
  Note that $f\leq \lfloor (1+x_3)/x_1\rfloor <(1+x_3)$, and thus we can set $c=fx_1(1+x_2)+1$ which is less than $b$.
  The left-hand-side of our above equation is given by
  \begin{align*}
    & \, -\left\lfloor \frac{(fx_1(1+x_2)+1)(1+x_3)}{x_1(1+x_2)(1+x_3)+1}\right\rfloor \\
    = & \, -\left\lfloor \frac{fx_1(1+x_2)(1+x_3)+(1+x_3)}{x_1(1+x_2)(1+x_3)+1}\right\rfloor \\
    = & \, -\left\lfloor \frac{fx_1(1+x_2)(1+x_3)+f+(1+x_3)-f}{x_1(1+x_2)(1+x_3)+1}\right\rfloor \\
    = & \, -f \, .
  \end{align*}
  Similarly, the right-hand-side of our equation is given by
  \begin{align*}
    & \, -f+\left \lfloor \frac{f-(1+x_3)+g(1+x_2)(1+x_3)+t (1+x_3)-f}{x_1(1+x_2)(1+x_3)+1}\right\rfloor \\
    = & \,-f+\left \lfloor \frac{-(1+x_3)+g(1+x_2)(1+x_3)+t (1+x_3)}{x_1(1+x_2)(1+x_3)+1}\right\rfloor \, .
  \end{align*}
  Since $h(b)$ is assumed to be at least $2$, we have that one or both of $g$ and $t$ are non-zero.
  Combining this observation with $g(1+x_2)(1+x_3)+t (1+x_3)-f\geq 0$ it follows that $g(1+x_2)(1+x_3)+t (1+x_3)> 0$.
  Note that $(1+x_3)\mid (g(1+x_2)(1+x_3)+t (1+x_3))$, and thus $x_1(1+x_2)(1+x_3)+1>g(1+x_2)(1+x_3)+t (1+x_3) -(1+x_3)\geq 0$, which forces the right-hand-side of our equation to equal $-f$, satisfying~\eqref{eqn 2}.

  \textbf{Subcase 2 of 2:} 
  Suppose $g(1+x_2)(1+x_3)+t (1+x_3)-f<0$.
  Note that since $g,(1+x_2)(1+x_3),t,(1+x_3)\geq 0$, it follows that $f\geq 1$ and thus $b=(fx_1+g)(1+x_2)+t\geq (1+x_2)$.
  Set $c=(1+x_2)$, which is less than $b$.
  With these conditions, the left-hand-side of~\eqref{eqn 2} is easily seen to equal $-1$.
  The right-hand-side of~\eqref{eqn 2} is given by
  \[
    \left\lfloor \frac{g(1+x_2)(1+x_3)+t (1+x_3)-f-(1+x_2)(1+x_3)}{x_1(1+x_2)(1+x_3)+1}\right\rfloor \, .
  \]
  Since $g(1+x_2)(1+x_3)+t (1+x_3) -f<0$ and $-(1+x_2)(1+x_3)<0$, the numerator above is strictly negative.
  Also, since $g,(1+x_2)(1+x_3),t, (1+x_3)\geq 0$, the numerator is minimized by $-f-(1+x_2)(1+x_3)>-(1+x_3)-(1+x_2)(1+x_3)\geq -2(1+x_2)(1+x_3)$.
  But, since we assumed that $x_1\geq 2$, it follows that $x_1(1+x_2)(1+x_3)+1>2(1+x_2)(1+x_3)$ and thus the floor function above is equal to $-1$, satisfying equality for~\eqref{eqn 2}.
  This completes the proof establishing IDP for $\br$-vectors of type $(iii)$.

\subsection{Proof of IDP for type $(v)$ in Theorem~\ref{thm:3suppclassify}}
We next verify IDP for $\br$-vectors of type $(v)$ using Theorem~\ref{thm:idpreflexives}. 
  Again, we must consider three cases corresponding to three possible values of $q_j$.
  
  \textbf{Case: $q_j=1+(1+x_3)x_2$}. 
  It is straightforward to verify that 
 \begin{align*}
 h(b) = b - x_2\floor{\frac{b(1+x_3)}{1+(1+x_3)x_2}},
 \end{align*}
 where $1\leq b \leq (1+x_3)x_2$. 
  To verify that~\eqref{eqn 2} always has the desired solution, we consider three cases.
  If $q_i=1+(1+x_3)x_2$, the result is trivial.
  If $q_i=(1+x_3)(1+x_1(1+(1+x_3)x_2))$, then we write $b = \alpha x_2 + \beta$, where $0\leq \beta < x_2$ and $0\leq \alpha \leq 1+x_3$ for $\alpha,\beta \in \Z$. 
  Consequently, observe that
  \begin{align*}
  \floor{\frac{b(1+x_3)}{1+(1+x_3)x_2}} = \floor{\frac{(\alpha x_2 + \beta)(1+x_3}{1+(1+x_3)x_2}} = \alpha + \floor{\frac{\beta(1+x_3) - \alpha}{1+(1+x_3)x_2}} = \alpha + \left. \begin{cases} \phantom{-}0, & \beta > 0 \\ -1, & \beta = 0 \end{cases} \right\} .
\end{align*}  
In the case that $\beta > 0$, this formula implies that $h(b) = b - x_2\alpha = \beta$. 
Hence, the viable candidates for $c$-values that satisfy~\eqref{eqn 3} are precisely those $c$ such that $c \equiv 1\bmod x_2$.
Therefore, the $b$-values we are required to check in~\eqref{eqn 1} are all $b = \alpha x_2 + \beta$, where $2\leq \beta \leq x_2 - 1$.
In this case, we may choose $c = \alpha x_2 + 1$, from which it follows that both sides of~\eqref{eqn 2} are equal to $(\beta - 1)(1+x_3)x_1$.
On the other hand, if $\beta = 0$, then we have that $h(b) = h(\alpha x_2) = \alpha x_2 - x_2(\alpha - 1) = x_2$. If $x_2 = 1$, then $h(b) = 1$. 
Thus, we need only consider when $x_2 > 1$. 
Given that $x_2 > 1$ and $0\leq \alpha \leq 1+x_3$, it is straightforward to verify that both sides of~\eqref{eqn 2} when $c=1$ are equal to $(\alpha x_2 - 1)(1+x_3)x_1 + \alpha - 1$. 
  If $q_i=(1+(1+(1+x_3)x_2)x_1)(1+(1+x_3)x_2)$, then we may again set $c=1$, from which it is straightforward to compute that both sides of~\eqref{eqn 2} are equal to $(b-1)(1+(1+(1+x_3)x_2)x_1)$.
  This completes our first case.
  
  \textbf{Case: $q_j=(1+x_3)(1+x_1(1+(1+x_3)x_2))$}.
  We first identify those values of $b$ that satisfy~\eqref{eqn 1} and~\eqref{eqn 3}.
  It is straightforward to verify that
  \[
    h(b)=b-x_1\left\lfloor \frac{b(1+(1+x_3)x_2)}{(1+x_3)(1+x_1(1+(1+x_3)x_2))}\right\rfloor -x_3\left\lfloor \frac{b}{1+x_3} \right\rfloor \, .
  \]
  Writing $b=\alpha (1+x_3)+\beta$, where $0\leq \beta < 1+x_3$ and $0\leq \alpha \leq x_1(1+(1+x_3)x_2)$, it follows that
  \begin{align*}
    h(b) &= h(\alpha(1+x_3)+\beta) \\
    &= \alpha + \beta - x_1\left\lfloor \frac{\alpha(1+x_3)(1+(1+x_3)x_2) + \beta(1+(1+x_3)x_2)}{(1+x_3)(1+x_1(1+(1+x_3)x_2))}\right \rfloor  \, .
  \end{align*}
  We can now further divide into cases: either we have $\alpha = x_1(1+(1+x_3)x_2)$ or we have $\alpha = \gamma x_1 + \delta$ where $0\leq \gamma < 1+(1+x_3)x_2$ and $0\leq \delta < x_1$, which yields
  \begin{align*}
  h(b) &= h((\gamma x_1 + \delta)(1+x_3) + \beta) \\ 
  &= \delta + \beta - x_1 \floor{\frac{\delta(1+x_3)(1+(1+x_3)x_2) + \beta(1+(1+x_3)x_2) - \gamma(1+x_3)}{(1+x_3)(1+x_1(1+(1+x_3)x_2))}} \, .
  \end{align*}
  For $\alpha \neq x_1(1+(1+x_3)x_2)$, observe that since $0\leq \delta(1+x_3)(1+(1+x_3)x_2) \leq (x_1-1)(1+x_3)(1+(1+x_3)x_2)$ and $0\leq \beta(1+(1+x_3)x_2) < (1+x_3)(1+(1+x_3)x_2)$ with $0\leq \gamma(1+x_3)<(1+x_3)(1+(1+x_3)x_2)$, we have that $0\leq \delta(1+x_3)(1+(1+x_3)x_2) + \beta(1+(1+x_3)x_2) < x_1(1+x_3)(1+(1+x_3)x_2)$. 
  Thus, $\floor{\frac{\delta(1+x_3)(1+(1+x_3)x_2) + \beta(1+(1+x_3)x_2 - \gamma(1+x_3)}{(1+x_3)(1+x_1(1+(1+x_3)x_2))}}$ is equal to either $0$ or $-1$. 
  
  \textbf{Subcase 1 of 3:} Suppose $\alpha = x_1(1+(1+x_3)x_2)$. Since $0\leq \beta(1+(1+x_3)x_2) < (1+x_3)(1+(1+x_3)x_2)$, we  have
  \begin{align*}
  h((1+x_3)x_1(1+(1+x_3)x_2)+\beta) &= \beta - x_1\floor{\frac{\beta(1+(1+x_3)x_2) - (1+x_3)(1+(1+x_3)x_2)}{(1+x_3)(1+x_1(1+(1+x_3)x_2))}} \\
   &= \beta + x_1.
  \end{align*}
  If this is equal to 1, then it must be that $\beta = 0$ and $x_1 = 1$. Thus, if $x_1 = 1$, we have that $h((1+x_3)(1+(1+x_3)x_2)) = 1$.
  
  \textbf{Subcase 2 of 3:} Suppose now that $\alpha \neq x_1(1+(1+x_3)x_2)$ and that
  \begin{align*}
  \floor{\frac{\delta(1+x_3)(1+(1+x_3)x_2) + \beta(1+(1+x_3)x_2 - \gamma(1+x_3)}{(1+x_3)(1+x_1(1+(1+x_3)x_2))}} = -1.
  \end{align*}
  Then, writing $\alpha = \gamma x_1 + \delta$ where $0\leq \gamma < 1+(1+x_3)x_2$ and $0\leq \delta < x_1$, we have $h(b) = \delta + \beta + x_1$. 
Since $\delta,\beta \geq 0$ and $x_1 \geq 1$, for this to equal 1, we must have $\delta = \beta = 0$ and $x_1 = 1$. 
In this case, $h(\gamma(1+x_3)) = 1$ whenever $\gamma > 0$.
Thus, if $x_1=1$, we have that $h(\gamma(1+x_3)) = 1$ when $0<\gamma < 1+(1+x_3)x_2$. 
  
  \textbf{Subcase 3 of 3:} Suppose now that $\alpha \neq x_1(1+(1+x_3)x_2)$ and that
  \begin{align*}
  \floor{\frac{\delta(1+x_3)(1+(1+x_3)x_2) + \beta(1+(1+x_3)x_2 - \gamma(1+x_3)}{(1+x_3)(1+x_1(1+(1+x_3)x_2))}} = 0.
  \end{align*}
  Then, it follows that $0\leq \gamma(1+x_3)\leq \delta(1+x_3)(1+(1+x_3)x_2)+\beta(1+(1+x_3)x_2)$, which given the bounds on $\gamma$, $\delta$, and $\beta$, implies that either (A) $0<\delta<x_1$ or (B) $\delta = 0$ with $\gamma(1+x_3)\leq \beta(1+(1+x_3)x_2)$. 
 If (A) holds, then $h(b) = \delta + \beta = 1$ forces $\delta = 1$ and $\beta = 0$ since $\delta > 0$. 
Therefore, $h(b) = 1$ when $b=(\gamma x_1+1)(1+x_3)$ for $0\leq \gamma < 1+(1+x_3)x_2$. 
If (B) holds, then our same equation forces $\delta = 0$ and $\beta = 1$ when $\gamma(1+x_3) \leq 1+1(1+x_3)x_2$, which further implies $0\leq \gamma \leq x_2$. 
This means $h(b) = 1$ when $b=\gamma x_1(1+x_3)+1$ for $0\leq \gamma \leq x_2$.
  
  We summarize the values of $b$ for which $h(b) = 1$ that were just derived:
  \begin{itemize}
  \item If $x_1=1$ and $0<\gamma\leq 1+(1+x_3)x_2$, we have $b=\gamma(1+x_3)$.
  \item If $x_1\geq 1$ and $0\leq \gamma < 1+(1+x_3)x_2$, we have $b=(\gamma x_1+1)(1+x_3)$.
  \item If $x_1\geq 1$ and $0\leq \gamma < 1+(1+x_3)x_2$ and $\gamma < x_2$, we have $b = \gamma x_1(1+x_3)+1$.
  \end{itemize}
  Next, we establish that~\eqref{eqn 2} is always satisfied; recall that we are in the case where $q_j = (1+x_3)(1+x_1(1+(1+x_3)x_2))$. 
  If $q_i = (1+x_3)(1+x_1(1+(1+x_3)x_2))$, then~\eqref{eqn 2} is trivially satisfied. 
  Now suppose $q_i = (1+(1+(1+x_3)x_2)x_1)(1+(1+x_3)x_2)$. Note that $b\neq 1+x_3$ since $h(1+x_3)=1$. 
  If $b>1+x_3$, we set $c=1+x_3$ from which it is straightforward to compute that both sides of~\eqref{eqn 2} are equal to $bx_2 - (1+(1+x_3)x_2) + \floor{\frac{b}{1+x_3}}$.
  Otherwise, if $b<1+x_3$, we may choose $c=1$ from which it follows that both sides of~\eqref{eqn 2} are equal to $(b-1)x_2$ since $1\leq b < 1+x_3$ implies $0\leq b-1 < 1+x_3$.
  Finally, if $q_i = 1+(1+x_3)x_2$, we write $b=\alpha(1+x_3)+\beta$, where $0\leq \beta < 1+x_3$ and $0\leq \alpha < 1+x_1(1+(1+x_3)x_2)$. 
  Moreover, we write $\alpha = \gamma x_1 + \delta$, where $0\leq \delta < x_1$ and $0\leq \gamma \leq 1+(1+x_3)x_2$. 
  We consider the following possible cases:
  
  \textbf{Subcase 1 of 4:} Suppose $\beta > 0$ and $\delta > 0$. Then, choosing $c = (\gamma x_1+1)(1+x_3) < (\gamma x_1+\delta)(1+x_3) + \beta = b$, it follows that both sides of~\eqref{eqn 2} are equal to 0.
  
  \textbf{Subcase 2 of 4:} Suppose $\beta = 0$ and $\delta > 0$. Note that $\delta \neq 1$ since $h((\gamma x_1+1)(1+x_3))= 1$. Therefore, $2\leq \delta < x_1$, so we may consider $c=(\gamma x_1+1)(1+x_3) < (\gamma x_1+\delta)(1+x_3) = b$. Since $2\leq \delta < x_1$ and $0\leq \gamma < 1+(1+x_3)x_2$, both sides of~\eqref{eqn 2} are equal to 0.
  
  \textbf{Subcase 3 of 4:} Suppose $\beta>0$ and $\delta = 0$. If $0\leq \gamma \leq x_2$, then $\beta \neq 1$ since $h(\gamma x_1(1+x_3)+1) = 1$. Thus, it must be that $\beta > 1$, thereby allowing us to consider $c=\gamma x_1(1+x_3)+1 < \gamma x_1(1+x_3) + \beta = b$. With this choice of $c$, it is straightforward to verify both sides of~\eqref{eqn 2} will again be equal to 0. Otherwise, if $x_2 < \gamma \leq 1+(1+x_3)x_2$, consider $c=x_2x_1(1+x_3)+1 < \gamma x_1(1+x_3) + \beta = b$. Then, the left-hand side of~\eqref{eqn 2} simplifies to 
  \begin{align*}
  \gamma - x_2 + \floor{\frac{\beta(1+(1+x_3)x_2)-\gamma(1+x_3)}{(1+x_3)(1+x_1(1+(1+x_3)x_2))}},
  \end{align*}
  and the right-hand side of~\eqref{eqn 2} simplifies to 
  \begin{align*}
  \gamma - x_2 + \floor{\frac{\beta(1+(1+x_3)x_2)-\gamma(1+x_3) - 1}{(1+x_3)(1+x_1(1+(1+x_3)x_2))}}.
  \end{align*}
  Indeed, these two quantities are equivalent because $\beta(1+(1+x_3)x_2) \neq \gamma(1+x_3)$. 
  To see this, assume otherwise. 
  Then, it would follow that $\beta = (\gamma - \beta x_2)(1+x_3)$. 
  However, this is a contradiction as we assumed $0<\beta < 1+x_3$. 
  Therefore, we have that~\eqref{eqn 2} is satisfied.
  
  \textbf{Subcase 4 of 4:} Suppose $\beta = \delta = 0$. 
  In this case, $b = \gamma x_1(1+x_3)$.
  Moreover, note that $\gamma > 0$ since otherwise, $b = 0$ contradicting our bounds on $b$. 
  If $x_1=1$, then $h(b) = h(\gamma(1+x_3)) = 1$. 
  Hence, we need only consider when $x_1>1$. 
  Since $\gamma > 0$ and $x_1 > 1$, we may consider $c = 1+x_3 < \gamma x_1(1+x_3) = b$, from which it is straightforward to find that both sides of~\eqref{eqn 2} are equal to $\gamma - 1$. 
  
  As these cases cover all possible values for $\beta$ and $\delta$, this completes our second case.
  
  \textbf{Case:} $q_j = (1+(1+(1+x_3)x_2)x_1)(1+(1+x_3)x_2)$. Thus, we consider $1\leq b \leq (1+(1+(1+x_3)x_2)x_1)(1+(1+x_3)x_2) - 1$.
  Again, we will start by identifying those values of $b$ that satisfy~\eqref{eqn 1} and~\eqref{eqn 3}. 
  It is straightforward to verify that 
  \begin{align*}
  h(b) = b - x_1\floor{\frac{b}{1+(1+(1+x_3)x_2)x_1}} - x_2\floor{\frac{b(1+x_3)}{1+(1+x_3)x_2}}.
  \end{align*}
  Writing $b=\alpha(1+(1+x_3)x_2)+\beta$, where $0\leq \beta < 1+(1+x_3)x_2$ and $0\leq \alpha < 1+(1+(1+x_3)x_2)x_1$, it follows that
  \begin{align*}
  	h(b) &= h(\alpha(1+(1+x_3)x_2)+\beta) \\ 
  	&= \alpha + \beta - x_1\floor{\frac{\alpha(1+(1+x_3)x_2)+\beta}{1+(1+(1+x_3)x_2)x_1}} - x_2\floor{\frac{\beta(1+x_3)}{1+(1+x_3)x_2}}. 
  \end{align*}
  Now, writing $\beta = \gamma x_2 + \delta$ , where $0\leq \delta < x_2$ and $0\leq \gamma \leq 1+x_3$, and $\alpha = \vep x_1+\eta$, where $0\leq \eta < x_1$ and $0\leq \vep \leq 1+(1+x_3)x_2$, it follows that
  \begin{align*}
  h(b) = \eta + \delta - x_1\floor{\frac{\eta(1+(1+x_3)x_2)+\gamma x_2 + \delta - \vep}{1+(1+(1+x_3)x_2)x_1}} - x_2\floor{\frac{\delta(1+x_3)-\gamma}{1+(1+x_3)x_2}}.
  \end{align*}
Since $0\leq \delta < x_2$ and $0\leq \gamma \leq 1+x_3$, observe that
\begin{align*}
\floor{\frac{\delta(1+x_3)-\gamma}{1+(1+x_3)x_2}} = \begin{cases} -1, & \delta = 0, \gamma > 0 \\ \phantom{-}0, & \text{otherwise} \end{cases}.
\end{align*}
Given the bounds on $\vep$, $\eta$, $\gamma$, and $\delta$, note that $-(1+(1+x_3)x_2) \leq \eta(1+(1+x_3)x_2)+\gamma x_2 + \delta - \vep < 1+(1+(1+x_3)x_2)x_1$.
Consequently, it follows that
\begin{align*}
\abs{\eta(1+(1+x_3)x_2)+\gamma x_2 + \delta - \vep} < 1+(1+(1+x_3)x_2)x_1,
\end{align*}
and this implies that $\floor{\frac{\eta(1+(1+x_3)x_2)+\gamma x_2 + \delta - \vep}{1+(1+(1+x_3)x_2)x_1}}$ is equal to either $0$ or $-1$. 
To resolve this floor function, we consider the following subcases which analyze the sign of the numerator of its argument. 
Let $n$ denote that numerator, i.e., $n=\eta(1+(1+x_3)x_2)+\gamma x_2 + \delta - \vep$.

\textbf{Subcase 1 of 3:} Suppose $\eta = 0$ and $\gamma x_2 + \delta \geq \vep$. 
Then, we have that $n \geq 0$, implying
\begin{align*}
\floor{\frac{\eta(1+(1+x_3)x_2)+\gamma x_2 + \delta - \vep}{1+(1+(1+x_3)x_2)x_1}} = 0.
\end{align*}
Therefore, our equation for $h(b)$ simplifies to
\begin{align*}
h(b) = \delta - x_2\cdot \left. \begin{cases} -1, & \delta = 0, \gamma > 0 \\ \phantom{-}0, & \text{otherwise} \end{cases} \right\}.
\end{align*}
If $\delta = 0$ and $\gamma > 0$, then $h(b) = h(\vep x_1(1+(1+x_3)x_2)+\gamma x_2) = x_2$. 
Thus, if $x_2=1$, we have that $h(b) = h(\vep x_1(2+x_3)+\gamma) = 1$ whenever $\gamma\geq \vep$ with $\gamma > 0$. 
Otherwise, $h(b) = \delta$, which forces $\delta = 1$, i.e., $h(b) = h(\vep x_1(1+(1+x_3)x_2) + \gamma x_2 + \delta) = 1$ whenever $\gamma x_2 + \delta \geq \vep$.

\textbf{Subcase 2 of 3:} Suppose $\eta = 0$ and $\gamma x_2 + \delta < \vep$.
Then, we have that $n < 0$, implying
\begin{align*}
\floor{\frac{\eta(1+(1+x_3)x_2)+\gamma x_2 + \delta - \vep}{1+(1+(1+x_3)x_2)x_1}} = -1.
\end{align*}
Therefore, our equation for $h(b)$ simplifies to
\begin{align*}
h(b) = \delta + x_1 - x_2\cdot \left. \begin{cases} -1, & \delta = 0, \gamma > 0 \\ \phantom{-}0, & \text{otherwise} \end{cases} \right\}.
\end{align*}
If $\delta = 0$ and $\gamma > 0$, then $h(b) = h(\vep x_1(1+(1+x_3)x_2)+\gamma x_2) = x_1 + x_2 > 1$. 
Otherwise, $h(b) = \delta + x_1$, which forces $\delta = 0$ and $x_1 = 1$ since $x_1\geq 1$, i.e., $h(b) = h(\vep(1+(1+x_3)x_2) + \gamma x_2) = 1$ whenever $\gamma x_2 < \vep$.

\textbf{Subcase 3 of 3:} Suppose $\eta \geq 1$. 
Then, we have that $n \geq 0$, implying
\begin{align*}
\floor{\frac{\eta(1+(1+x_3)x_2)+\gamma x_2 + \delta - \vep}{1+(1+(1+x_3)x_2)x_1}} = 0.
\end{align*}
Therefore, our equation for $h(b)$ simplifies to
\begin{align*}
h(b) = \eta + \delta - x_2\cdot \left. \begin{cases} -1, & \delta = 0, \gamma > 0 \\ \phantom{-}0, & \text{otherwise} \end{cases} \right\}.
\end{align*}
If $\delta = 0$ and $\gamma > 0$, then $h(b) = h((\vep x_1+\eta)(1+(1+x_3)x_2)+\gamma x_2) = \eta + x_2 \geq 1+x_2 > 1$. 
Otherwise, $h(b) = \eta + \delta$, which forces $\delta = 0$ and $\eta = 1$ since we assume $\eta \geq 1$, i.e., $h(b) = h((\vep x_1 + 1)(1+(1+x_3)x_2) + \gamma x_2) = 1$.

We summarize the values of $b$ for which $h(b) = 1$ that were just derived:
\begin{itemize}
\item If $x_1=1$, $\eta=\delta=0$,  and $0\leq \gamma x_2 < \vep \leq 1+(1+x_3)x_2$, we have $b=\vep(1+(1+x_3)x_2)+\gamma x_2$.
\item If $x_2=1$, $\eta = \delta = 0$, and $0\leq \vep \leq \gamma \leq 1+x_3$ with $\gamma > 0$, we have $b=\vep x_1(2+x_3)+\gamma$. 
\item If $x_1,x_2\geq 1$, $\eta = 0$, $\delta = 1$, and $0\leq \vep \leq \gamma x_2 + 1 \leq 1+(1+x_3)x_2$, we have $b=\vep x_1(1+(1+x_3)x_2)+\gamma x_2 + 1$.
\item If $x_1,x_2\geq 1$, $\eta = 1$, $\delta = 0$, $0\leq \gamma \leq 1+x_3$, and $0\leq \vep \leq 1+(1+x_3)x_2$, we have $b=(\vep x_1+1)(1+(1+x_3)x_2)+\gamma x_2$.
\end{itemize}

Our next goal is to establish that~\eqref{eqn 2} is always satisfied; recall that we are in the case where $q_j=(1+(1+(1+x+3)x_2)x_1)(1+(1+x_3)x_2)$. 
If $q_i=(1+(1+(1+x+3)x_2)x_1)(1+(1+x_3)x_2)$, the result is trivial. 
If $q_i = 1+(1+x_3)x_2$, we write $b=\alpha(1+(1+x_3)x_2)+\beta$, where $0\leq \beta < 1+(1+x_3)x_2$ and $0\leq \alpha < 1+(1+(1+x_3)x_2)x_1$.
Furthermore, we write $\beta = \gamma x_2+\delta$, where $0\leq \delta<x_2$ and $0\leq\gamma\leq 1+x_3$, and we write $\alpha = \vep x_1 + \eta$, where $0\leq \eta < x_1$ and $0\leq \vep \leq 1+(1+x_3)x_2$.
We consider the following possible cases:

\textbf{Subcase 1 of 4:} Suppose $\eta > 0$ and $\delta > 0$. 
We consider $c=(\vep x_1+1)(1+(1+x_3)x_2)+\gamma x_2 < (\vep x_1+\eta)(1+(1+x_3)x_2)+\gamma x_2 +\delta = b$.
Since $0<\eta<x_1$ and $0\leq \delta < x_2$, it follows that $0<(\eta-1)(1+(1+x_3)x_2)+\delta < x_1(1+(1+x_3)x_2)$.
Therefore, it is straightforward to verify that both sides of~\eqref{eqn 2} are equal to $0$. 

\textbf{Subcase 2 of 4:} Suppose $\eta = 0$ and $\delta > 0$. 
If $\gamma x_2+1 \geq \vep$, note that $\delta\neq 1$ since otherwise, $h(b)=h(\vep x_1(1+(1+x_3)x_2)+\gamma x_2 + 1)=1$. 
Thus, we have that $\delta>1$, and we consider $c=\vep x_1(1+(1+x_3)x_2)+\gamma x_2+1 < \vep x_1(1+(1+x_3)x_2)+\gamma x_2+\delta = b$.
Given that $1<\delta <x_2$ implies $\gamma x_2+\delta > \gamma x_2+1 \geq \vep$, it is straightforward to verify that our choice of $c$ gives that both sides of~\eqref{eqn 2}  are equal to $0$.
Otherwise, if $\gamma x_2+1<\vep$ (and hence, $\vep > 1$), we consider $c=1+x_1(1+(1+x_3)x_2) \leq \delta + x_1(1+(1+x_3)x_2) < \vep x_1(1+(1+x_3)x_2)+\gamma x_2 +\delta = b$.
With this choice of $c$, it is straightforward to verify both sides of~\eqref{eqn 2} are equal to
\begin{align*}
\vep - 1 + \floor{\frac{\gamma x_2 + \delta - \vep}{1+(1+(1+x_3)x_2)x_1}}\, .
\end{align*}

\textbf{Subcase 3 of 4:} Suppose $\eta > 0$ and $\delta = 0$. 
Note that $\eta \neq 1$ since otherwise, $h(b)=h((\vep x_1+1)(1+(1+x_3)x_2)+\gamma x_2)=1$. 
Thus, we have that $\eta > 1$, and we consider $c=(\vep x_1+1)(1+(1+x_3)x_2)+\gamma x_2 < (\vep x_1+\eta)(1+(1+x_3)x_2)+\gamma x_2 = b$. 
Since $1<\eta<x_1$, it is straightforward to verify that both sides of~\eqref{eqn 2} are equal to $0$. 

\textbf{Subcase 4 of 4:} Suppose $\eta = \delta = 0$. 
Further suppose $\gamma x_2 < \vep$ (and hence, $\vep > 0$).
If $x_1=1$, then $h(b) = h(\vep(1+(1+x_3)x_2)+\gamma x_2) = 1$, so we may assume $x_1>1$.
Consider $c=1+(1+x_3)x_2 < \vep x_1(1+(1+x_3)x_2)+\gamma x_2 = b$. 
Since $x_1 > 1$ and $\gamma x_2 < \vep$, it follows that $-x_1(1+(1+x_3)x_2) \leq \gamma x_2 - \vep - (1+(1+x_3)x_2) < 0$.
Given this, it is straightforward to verify that both sides of~\eqref{eqn 2} are equal to $\vep - 1$.
Now, suppose $\gamma x_2 > \vep$. 
Note that $\gamma \neq 0$ since otherwise, $\vep < 0$ contradicting our initial bounds on $\vep$. 
Thus, we have that $\gamma > 0$.
Moreover, if $x_2=1$, it follows that $h(b) = h(\vep x_1(2+x_3)+\gamma)=1$, so we may assume $x_2 > 1$.
Taking $c=1$, the inequality $\gamma x_2 > \vep$ readily implies that both sides of~\eqref{eqn 2} are equal to $\vep$. 
Finally, suppose $\gamma x_2 = \vep$. 
Note that neither $\gamma$ nor $\vep$ can be equal to 0 since otherwise, we would have $\eta = \delta = \gamma = \vep = 0$, implying $b=0$. 
This, of course, contradicts the bounds on $b$.
Moreover, we may again assume $x_2>1$ (and thus, $\vep > 1$) since otherwise, $h(b) = h(\vep x_1(2+x_3)+\gamma) = 1$. 
Since $\gamma x_2 = \vep$, observe that $b = \vep(1+x_1(1+(1+x_3)x_2))$.
As such, we may consider $c = 1+x_1(1+(1+x_3)x_2)$ which is strictly less than $b$ since $\vep > 1$. 
This choice of $c$ readily gives that both sides of~\eqref{eqn 2} are equal to $\vep - 1$.

Finally, if $q_i = (1+x_3)(1+x_1(1+(1+x_3)x_2))$, we begin again by writing $b=\alpha(1+(1+x_3)x_2)+\beta$, where $0\leq \beta < 1+(1+x_3)x_2$ and $0\leq \alpha < 1+(1+(1+x_3)x_2)x_1$.
Furthermore, we write $\beta = \gamma x_2+\delta$, where $0\leq \delta<x_2$ and $0\leq\gamma\leq 1+x_3$, and we write $\alpha = \vep x_1 + \eta$, where $0\leq \eta < x_1$ and $0\leq \vep \leq 1+(1+x_3)x_2$.
We consider the following possible cases:

\textbf{Subcase 1 of 3:} Suppose $\delta > 1$. 
Combined with our bounds on $\gamma$, this implies that $0\leq (\delta-1)(1+x_3)-\gamma <1+(1+x_3)x_2$.
Taking $c=1$, it follows that both sides of~\eqref{eqn 2} are equal to $\alpha(1+x_3)+\gamma$.

\textbf{Subcase 2 of 3:} Suppose $\delta = 1$. 
If $\gamma x_2+1\geq \vep$, note that $\eta \neq 0$ since otherwise, $h(b) = h(\vep x_1(1+(1+x_3)x_2)+\gamma x_2+1) = 1$.
Thus, $\eta \geq 1$, and we may consider $c=(\vep x_1+1)(1+(1+x_3)x_2) < (\vep x_1+\eta)(1+(1+x_3)x_2)+\gamma x_2+1 = b$. 
Since $\gamma\leq 1+x_3$, it is straightforward to show both sides of~\eqref{eqn 2} are equal to $(\eta-1)(1+x_3)+\gamma$.
On the other hand, if $\gamma x_2 +1 < \vep$ (and hence, $\vep > 1$), we may consider $c=1+(1+x_3)x_2< (\vep x_1+\eta)(1+(1+x_3)x_2)+\gamma x_2+1 = b$, from which it follows that both sides of~\eqref{eqn 2} are equal to $(\vep x_1 + \eta - 1)(1+x_3) + \gamma$.

\textbf{Subcase 3 of 3:} Suppose $\delta = 0$. Note that $\eta \neq 1$ since otherwise, $h(b) = 1$. 
This lends itself to two possibilities: (A) $\eta = 0$ or (B) $\eta > 1$. 
If (A) holds, we first suppose $\gamma x_2 < \vep$ (and hence, $\vep > 0$). 
If $x_1=1$, then $h(b)=1$, so we may assume $x_1>1$.
Since $\vep > 0$ and $x_1>1$, we may consider $c=1+(1+x_3)x_2 < \vep x_1(1+(1+x_3)x_2)+\gamma x_2 = b$. 
For this choice of $c$, it readily follows that both sides of~\eqref{eqn 2} are equal to
\begin{align*}
(\vep x_1 -1)(1+x_3) + \floor{\frac{\gamma x_2}{1+(1+x_3)x_2}}.
\end{align*}
Now, suppose $\gamma x_2 > \vep$. 
Since $\vep\geq 0$ by construction, this inequality implies $\gamma \neq 0$.
Moreover, note that if $x_2=1$, then $h(b) = 1$.
Thus, we may assume $x_2>1$, and we simply consider $c=1$.
Given that $0<\gamma \leq 1+x_3$ and $x_2>1$, it follows that $0<\gamma + (1+x_3) < 1+(1+x_3)x_2$.
Therefore, with $c=1$, we find that both sides of~\eqref{eqn 2} are equal to $\vep x_1(1+x_3) + \gamma - 1$.
Finally, suppose $\gamma x_2 = \vep$. 
Since $b\neq 0$, this equality implies that both $\gamma$ and $\vep$ cannot be $0$. Moreover, we may again assume $x_2>1$ since $x_2=1$ would imply $h(b) = 1$. 
Since $\gamma x_2 = \vep$, it follows that $b=\vep(1+x_1(1+(1+x_3)x_2))$, and we also get that $\vep > 1$ since we assume $x_2>1$.
Combining, we may consider $c = 1+x_1(1+(1+x_3)x_2) < \vep(1+x_1(1+(1+x_3)x_2)) = b$. 
As before, the inequality $0<\gamma + (1+x_3) < 1+(1+x_3)x_2$ still holds in this case, from which it is straightforward to verify both sides of~\eqref{eqn 2} are equal to $(\vep-1)x_1(1+x_3) + \gamma -1$.

On the other hand, if (B) holds, we have that $\eta > 1$. 
Therefore, we consider $c=(\vep x_1+1)(1+(1+x_3)x_2) + \gamma x_2 < (\vep x_1+\eta)(1+(1+x_3)x_2) + \gamma x_2 = b$, from which it follows that both sides of~\eqref{eqn 2} simplify to $(\eta-1)(1+x_3)$. 
In any case, we find that both sides of~\eqref{eqn 2} are equivalent for each possible $q_i$, thereby completing our third and final case.
Thus, we have established IDP for $\br$-vectors of type $(v)$.

\subsection{Proof of IDP for type $(vi)$ in Theorem~\ref{thm:3suppclassify}}
We next verify IDP for $\br$-vectors of type $(vi)$ using Theorem~\ref{thm:idpreflexives}. 
  Again, we must consider three cases corresponding to three possible values of $q_j$.
  
  \textbf{Case: $q_j=(1+x_3)(1+(1+x_3)x_2)$}. 
  We first identify those values of $b$ that satisfy~\eqref{eqn 1} and~\eqref{eqn 3}. It is straightforward to verify that 
 \begin{align*}
 h(b) = b - x_2\floor{\frac{b}{1+(1+x_3)x_2}} - x_3\floor{\frac{b}{1+x_3}},
 \end{align*}
 where $1\leq b \leq (1+x_3)(1+(1+x_3)x_2)-1$. 
 Writing $b = \alpha (1+x_3) + \beta$, where $0\leq \beta < 1+x_3$ and $0\leq \alpha < 1+(1+x_3)x_2$ for $\alpha,\beta \in \Z$, it follows that
\begin{align*}
h(b) &= h(\alpha(1+x_3)+\beta) \\ 
&= \alpha + \beta - x_2\floor{\frac{\alpha(1+x_3)+\beta}{1+(1+x_3)x_2}}.
\end{align*}
We can now further divide into cases: either we have $\alpha = (1+x_3)x_2$ or we have $\alpha = \gamma x_2 + \delta$ where $0\leq \gamma < 1+x_3$ and $0\leq \delta < x_2$, which yields
\begin{align*}
h(b) &= h((\gamma x_2+\delta)(1+x_3)+\beta) \\
&= \delta + \beta - x_2\floor{\frac{\delta(1+x_3)+\beta - \gamma}{1+(1+x_3)x_2}}.
\end{align*}
For $\alpha \neq (1+x_3)x_2$, let $n=\delta(1+x_3)+\beta - \gamma$. 
Observe that since $0\leq \delta(1+x_3)+\beta < 1+(1+x_3)x_2$ and $0\leq \gamma < 1+x_3$, it follows that $\abs{n} < 1+(1+x_3)x_2$. 
Thus, $\floor{\frac{\delta(1+x_3)+\beta - \gamma}{1+(1+x_3)x_2}}$ is equal to either $0$ or $-1$. 

\textbf{Subcase 1 of 4:} Suppose $\alpha = (1+x_3)x_2$. Since $0\leq \beta < 1+x_3$, we have
\begin{align*}
h((1+x_3)^2x_2 + \beta) &= (1+x_3)x_2 + \beta - (1+x_3)x_2 - x_2\floor{\frac{\beta-(1+x_3)}{1+(1+x_3)x_2}} \\
&= \beta + x_2.
\end{align*}
If this is equal to $1$, then it must be that $\beta = 0$ and $x_2=1$. Thus, if $x_2=1$, we have that $h((1+x_3)^2) = 1$.\

For the next three subcases, we assume $\alpha \neq (1+x_3)x_2$, so we may write $\alpha = \gamma x_2 + \delta$ where $0\leq \delta < x_2$ and $0\leq \gamma < 1+x_3$.

\textbf{Subcase 2 of 4:} Suppose $\delta = 0$ and $\beta \geq \gamma$. Then, we have that $n\geq 0$, implying 
\begin{align*}
\floor{\frac{\delta(1+x_3)+\beta - \gamma}{1+(1+x_3)x_2}} = 0.
\end{align*}
Therefore, our equation for $h(b)$ simplifies to $h(b) = \beta$, which forces $\beta = 1$, i.e., $h(b) = h(\gamma x_2(1+x_3)+1) = 1$ whenever $0\leq \gamma \leq 1$.

\textbf{Subcase 3 of 4:} Suppose $\delta = 0$ and $\beta < \gamma$. Then, we have that $n<0$, implying
\begin{align*}
\floor{\frac{\delta(1+x_3)+\beta - \gamma}{1+(1+x_3)x_2}} = -1.
\end{align*}
Therefore, our equation for $h(b)$ simplifies to $h(b) = \beta + x_2$, which forces $\beta = 0$ and $x_2=1$ since $x_2\geq 1$, i.e., $h(b) = h(\gamma(1+x_3)) = 1$ for $0<\gamma<1+x_3$.

\textbf{Subcase 4 of 4:} Suppose $\delta \geq 1$. Then, since $0\leq \gamma < 1+x_3$, we have that $n\geq 0$, implying
\begin{align*}
\floor{\frac{\delta(1+x_3)+\beta - \gamma}{1+(1+x_3)x_2}} = 0.
\end{align*}
Therefore, our equation for $h(b)$ simplifies to $h(b) = \delta + \beta$, which forces  $\beta = 0$ and $\delta = 1$ since we assume $\delta \geq 1$, i.e., $h(b) = h((\gamma x_2 +1)(1+x_3)) = 1$ for $0\leq \gamma < 1+x_3$. 

We summarize the values of $b$ for which $h(b) = 1$ that were just derived:

\begin{itemize}
\item If $x_2=1$ and $0< \gamma \leq 1+x_3$, we have $b=\gamma(1+x_3)$.
\item If $x_2\geq 1$, $\delta = 0$, $\beta = 1$, and $0\leq \gamma \leq 1$, we have $b=\gamma x_2(1+x_3) + 1$.
\item If $x_2\geq 1$, $\delta = 1$, $\beta = 0$, and $0\leq \gamma < 1+x_3$, we have $b=(\gamma x_2+1)(1+x_3)$.
\end{itemize}

Our next goal is to establish that~\eqref{eqn 2} is always satisfied; recall that we are in the case where $q_j = (1+x_3)(1+(1+x_3)x_2)$.
If $q_i = (1+x_3)(1+(1+x_3)x_2)$, the result is trivial. If $q_i = (1+(1+x_3)(1+(1+x_3)x_2)x_1)(1+(1+x_3)x_2)$, we write $b=\alpha(1+x_3)+\beta$, where $0\leq \beta < 1+x_3$ and $0\leq \alpha < 1+(1+x_3)x_2$. 
Note that $b\neq 1+x_3$ since $h(1+x_3)=1$. 
If $b>1+x_3$, we set $c=1+x_3$ from which it is straightforward to compute that both sides of~\eqref{eqn 2} are equal to $((\alpha-1)(1+x_3)+\beta)(1+(1+x_3)x_2)x_1 + \alpha - 1$.
  Otherwise, if $b<1+x_3$, note that $\alpha = 0$ and hence $b=\beta$. 
  To satisfy~\eqref{eqn 1}, we need only consider $1<\beta<1+x_3$. 
  Thus, we may choose $c=1$ from which it follows that both sides of~\eqref{eqn 2} are equal to $(\beta-1)(1+(1+x_3)x_2)x_1$ since $1< b < 1+x_3$ implies $1\leq \beta-1 < 1+x_3$.
Finally, if $q_i = (1+x_3)(1+x_1(1+x_3)(1+(1+x_3)x_2))$, we again write $b=\alpha(1+x_3)+\beta$, where $0\leq \beta < 1+x_3$ and $0\leq \alpha < 1+(1+x_3)x_2$. 
Moreover, in the case that $\alpha \neq (1+x_3)x_2$, we write $\alpha = \gamma x_2 + \delta$, where $0\leq \delta < x_2$ and $0\leq \gamma < (1+x_3)$.
We consider the following possible cases:

\textbf{Subcase 1 of 5:} Suppose $\alpha = (1+x_3)x_2$. If $x_2=1$, then $\beta \neq 0$ since $h((1+x_3)^2) = 1$. 
Thus, we may consider $c=(1+x_3)^2 < (1+x_3)^2 + \beta = b$. 
With this choice of $c$, it is straightforward to verify that both sides of~\eqref{eqn 2} are equal to $\beta x_1(1+x_3)$. 
Otherwise, if $x_2 > 1$, we consider $c=1+x_3 < (1+x_3)^2x_2+\beta = b$. Since $x_2>1$ implies $-x_2(1+x_3) < \beta - 2(1+x_3) < 0$, it is straightforward to verify that both sides of~\eqref{eqn 2} are equal to $(b-(1+x_3))x_1(1+x_3) + x_3$.

\textbf{Subcase 2 of 5:} Suppose $\alpha \neq (1+x_3)x_2$ with $\delta > 0$ and $\beta > 0$. Then, choosing $c = (\gamma x_2 + 1)(1+x_3) < (\gamma x_2+\delta)(1+x_3)+\beta = b$, it follows that both sides of~\eqref{eqn 2} are equal to $((\delta - 1)(1+x_3) + \beta)x_1(1+x_3)$.

\textbf{Subcase 3 of 5:} Suppose $\alpha \neq (1+x_3)x_2$ with $\delta > 0$ and $\beta = 0$. Note that $\delta \neq 1$ since $h((\gamma x_2+1)(1+x_3))=1$. 
Therefore, $2\leq \delta < x_2$, so we may consider $c=(\gamma x_2+1)(1+x_3) < (\gamma x_2 + \delta)(1+x_3)=b$. 
Given this choice, we find that both sides of~\eqref{eqn 2} are equal to $(\delta - 1)(1+x_3)x_1(1+x_3)$.

\textbf{Subcase 4 of 5:} Suppose $\alpha \neq (1+x_3)x_2$ with $\delta = 0$ and $\beta > 0$. If $0\leq \gamma \leq 1$, then $\beta\neq 1$ since $h(\gamma x_2(1+x_3)+1) = 1$.
Thus, it must be that $\beta > 1$, thereby allowing us to consider $c = \gamma x_2(1+x_3)+1 < \gamma x_2(1+x_3)+\beta = b$.
With this choice of $c$, it is straightforward to verify both sides of~\eqref{eqn 2} are equal to $(\beta-1)x_1(1+x_3)$.
Otherwise, if $1<\gamma < 1+x_3$, consider $c=1+(1+x_3)x_2 <\gamma x_2(1+x_3) + \beta = b$. 
Then, both sides of~\eqref{eqn 2} are equal to
\begin{align*}
((\gamma-1)x_2(1+x_3)+\beta - 1)x_1(1+x_3) + \gamma - 1 + \floor{\frac{\beta - \gamma}{1+(1+x_3)x_2}}.
\end{align*}

\textbf{Subcase 5 of 5:} Suppose $\alpha \neq (1+x_3)x_2$ with $\delta = \beta = 0$. In this case, $b=\gamma x_2(1+x_3)$.
Moreover, note that $\gamma > 0$ since otherwise, $b=0$ contradicting our bounds on $b$. If $x_2=1$, then $h(b) = h(\gamma(1+x_3)) = 1$.
Hence, we need only consider when $x_2>1$.
Since $\gamma > 0$ and $x_2 > 1$, we may take $c=1+x_3 < \gamma x_2(1+x_3) = b$, from which it is straightforward to find that both sides of~\eqref{eqn 2} are equal to $(\gamma x_2 - 1)(1+x_3)^2 x_1 + \gamma - 1$.

This completes our first case.

\textbf{Case:} $q_j = (1+x_3)(1+x_1(1+x_3)(1+(1+x_3)x_2))$. We first identify those values of $b$ that satisfy~\eqref{eqn 1} and~\eqref{eqn 3}.
  It is straightforward to verify that
  \[
    h(b)=b-x_1\left\lfloor \frac{b(1+(1+x_3)x_2)}{1+x_1(1+x_3)(1+(1+x_3)x_2)}\right\rfloor -x_3\left\lfloor \frac{b}{1+x_3} \right\rfloor \, .
  \]
  Writing $b=\alpha (1+x_3)+\beta$, where $0\leq \beta < 1+x_3$ and $0\leq \alpha \leq 1+ x_1(1+x_3)(1+(1+x_3)x_2)$, it follows that
  \begin{align*}
    h(b) &= h(\alpha(1+x_3)+\beta) \\
    &= \alpha + \beta - x_1\left\lfloor \frac{(\alpha(1+x_3)+\beta)(1+(1+x_3)x_2)}{1+x_1(1+x_3)(1+(1+x_3)x_2)}\right \rfloor  \, .
  \end{align*}
  We can now further divide into cases: either we have $\alpha = x_1(1+x_3)(1+(1+x_3)x_2)$ or we have $\alpha = \gamma x_1 + \delta$ where $0\leq \gamma < (1+x_3)(1+(1+x_3)x_2)$ and $0\leq \delta < x_1$. If $\alpha = x_1(1+x_3)(1+(1+x_3)x_2)$, then since $0\leq \beta < 1+x_3$, we have
\begin{align*}
 h(b) &= \beta - x_1\floor{\frac{\beta(1+(1+x_3)x_2)-(1+x_3)(1+(1+x_3)x_2)}{1+x_1(1+x_3)(1+(1+x_3)x_2)}} \\
 &= \beta + x_1.
\end{align*}  
If this is equal to 1, then it must be that $\beta=0$ and $x_1=1$. Thus, if $x_1=1$, we have that $h((1+x_3)^2(1+(1+x_3)x_2))=1$. Otherwise, if $\alpha \neq x_1(1+x_3)(1+(1+x_3)x_2)$, we write $\alpha = \gamma x_1 + \delta$ where $0\leq \gamma < (1+x_3)(1+(1+x_3)x_2)$ and $0\leq \delta < x_1$. Thus, $h(b)$ simplifies as follows
  \begin{align*}
  h(b) &= h((\gamma x_1 + \delta)(1+x_3) + \beta) \\ 
  &= \delta + \beta - x_1 \floor{\frac{(\delta(1+x_3)+\beta)(1+(1+x_3)x_2) - \gamma}{1+x_1(1+x_3)(1+(1+x_3)x_2}} \, .
  \end{align*}
  For $\alpha \neq x_1(1+(1+x_3)x_2)$, observe that since $0\leq \delta(1+x_3)(1+(1+x_3)x_2) \leq (x_1-1)(1+x_3)(1+(1+x_3)x_2)$ and $0\leq \beta(1+(1+x_3)x_2) < (1+x_3)(1+(1+x_3)x_2)$ with $0\leq \gamma<(1+x_3)(1+(1+x_3)x_2)$, we have that $0\leq (\delta(1+x_3)+\beta)(1+(1+x_3)x_2) < x_1(1+x_3)(1+(1+x_3)x_2)$. 
Let $n=(\delta(1+x_3)+\beta)(1+(1+x_3)x_2)-\gamma$. 
The inequalities above readily imply $\abs{n} < 1+x_1(1+x_3)(1+(1+x_3)x_2)$.
  Thus, 
  \[
  \floor{\frac{\delta(1+x_3)(1+(1+x_3)x_2) + \beta(1+(1+x_3)x_2 - \gamma}{1+x_1(1+x_3)(1+(1+x_3)x_2)}}
  \]
  is equal to either $0$ or $-1$.
  We further write $\gamma = \vep (1+(1+x_3)x_2)+\eta$, where $0\leq \eta < 1+(1+x_3)x_2$ and $0\leq \vep <1+x_3$. 
  Then, $h(b)$ becomes
  \begin{align*}
  h(b) = \delta + \beta -x_1\floor{\frac{(\delta(1+x_3)+\beta)(1+(1+x_3)x_2) - (\vep(1+(1+x_3)x_2)+\eta)}{1+x_1(1+x_3)(1+(1+x_3)x_2)}}\, .
  \end{align*}
  
  \textbf{Subcase 1 of 4:} Suppose $\delta = 0$ and $\beta > \vep$. 
  Then, we have that $n>0$, implying
  \begin{align*}
  \floor{\frac{\delta(1+x_3)(1+(1+x_3)x_2) + \beta(1+(1+x_3)x_2 - \gamma}{1+x_1(1+x_3)(1+(1+x_3)x_2)}} = 0.
  \end{align*}
  Therefore, our equation for $h(b)$ simplifies to $h(b) = \beta$, which forces $\beta = 1$.
  Note that $\beta = 1$ implies $\vep =0$ as $\beta > \vep$. 
  Thus, we have $h(b) = h(\eta x_1(1+x_3)+1) = 1$ whenever $0\leq \eta \leq (1+x_3)x_2$.
  
  \textbf{Subcase 2 of 4:} Suppose $\delta = 0$ and $\beta < \vep$. 
  Note that $\vep > 0$ since $\beta < \vep$.
  Then, we have that $n<0$, implying
  \begin{align*}
  \floor{\frac{\delta(1+x_3)(1+(1+x_3)x_2) + \beta(1+(1+x_3)x_2 - \gamma}{1+x_1(1+x_3)(1+(1+x_3)x_2)}} = -1.
  \end{align*}
  Consequently, our equation for $h(b)$ simplifies to $h(b) = \beta + x_1$, which forces $\beta = 0$ and $x_1=1$ since $x_1\geq 1$, i.e., $h(b) = h((\vep(1+(1+x_3)x_2)+\eta)(1+x_3)) = 1$ whenever $0<\vep<1+x_3$ and $0\leq \eta \leq (1+x_3)x_2$.
  
  \textbf{Subcase 3 of 4:} Suppose $\delta=0$ and $\beta = \vep$. If $\eta > 0$, then we have that $n < 0$, implying
    \begin{align*}
  \floor{\frac{\delta(1+x_3)(1+(1+x_3)x_2) + \beta(1+(1+x_3)x_2 - \gamma}{1+x_1(1+x_3)(1+(1+x_3)x_2)}} = -1.
  \end{align*}
  In this case, $h(b)$ simplifies to $h(b) = \beta + x_1$, which again forces $\beta = 0$ and $x_1=1$. 
  Since $\beta = \vep$, it follows that $\vep = 0$, and so we have that $h(b) = h(\eta(1+x_3)=1$ whenever $0<\eta \leq (1+x_3)x_2$.
  Otherwise, if $\eta = 0$, then we have that $n=0$, implying
    \begin{align*}
  \floor{\frac{\delta(1+x_3)(1+(1+x_3)x_2) + \beta(1+(1+x_3)x_2 - \gamma}{1+x_1(1+x_3)(1+(1+x_3)x_2)}} = 0.
  \end{align*}
  Therefore, our equation for $h(b)$ simplifies to $h(b) = \beta$, which forces $\beta = 1$ (and thus, $\vep=1$), i.e., $h(b) = h((1+(1+x_3)x_2)x_1(1+x_3)+1) = 1$. 
  
  \textbf{Subcase 4 of 4:} Suppose $\delta \geq 1$. 
  Then, we have that $n\geq 0$, implying
    \begin{align*}
  \floor{\frac{\delta(1+x_3)(1+(1+x_3)x_2) + \beta(1+(1+x_3)x_2 - \gamma}{1+x_1(1+x_3)(1+(1+x_3)x_2)}} = 0.
  \end{align*}
  Therefore, our equation for $h(b)$ simplifies to $h(b) = \delta + \beta$, which forces $\delta = 1$ and $\beta = 0$ since $\delta \geq 1$. That is, we have $h(b) = h(((\vep(1+(1+x_3)x_2)+\eta)x_1+1)(1+x_3)) = 1$ whenever $0\leq \vep < 1+x_3$ and $0\leq \eta \leq (1+x_3)x_2$. 
  
  We summarize the values of $b$ for which $h(b)=1$ that were just derived:
  \begin{itemize}
  \item If $x_1=1$, $\beta = 0$, and $\alpha = (1+x_3)(1+(1+x_3)x_2)$, we have $b=(1+x_3)^2(1+(1+x_3)x_2$. 
  \item If $x_1=1$, $\beta = \delta = 0$, $0<\vep<1+x_3$, and $0\leq \eta\leq (1+x_3)x_2$, we have $b=(\eta(1+(1+x_3)x_2)+\eta)(1+x_3)$.
  \item If $x_1=1$, $\beta = \delta = \vep = 0$, and $0<\eta \leq (1+x_3)x_2$, we have $b=\eta(1+x_3)$.
  \item If $x_1\geq 1$, $\delta = \vep = 0$, $\beta = 1$, and $0\leq\eta\leq (1+x_3)x_2$, we have $b=\eta x_1(1+x_3)+1$.
  \item If $x_1\geq 1$, $\delta = \eta = 0$, and $\beta = \vep = 1$, we have $b=(1+(1+x_3)x_2)x_1(1+x_3)+1$.
  \item If $x_1\geq 1$, $\delta = 1$, $\beta = 0$, $0\leq \vep < 1+x_3$, and $0\leq \eta \leq (1+x_3)x_2$, we have $b = ((\vep(1+(1+x_3)x_2)+\eta)x_1+1)(1+x_3)$.
  \end{itemize}
  
  Our next goal is to establish that~\eqref{eqn 2} is always satisfied; recall that we are in the case where $q_j = (1+x_3)(1+x_1(1+x_3)(1+(1+x_3)x_2))$.
If $q_i = (1+x_3)(1+x_1(1+x_3)(1+(1+x_3)x_2))$, the result is trivial. If $q_i = (1+(1+x_3)(1+(1+x_3)x_2)x_1)(1+(1+x_3)x_2)$, note that $b\neq 1+x_3$ since $h(1+x_3)=1$. 
If $b>1+x_3$, we set $c=1+x_3$ from which it is straightforward to compute that both sides of~\eqref{eqn 2} are equal to $bx_2 - (1+(1+x_3)x_2) + \floor{\frac{b}{1+x_3}}$.
  Otherwise, if $b<1+x_3$, we may choose $c=1$ from which it follows that both sides of~\eqref{eqn 2} are equal to $(b-1)x_2$ since $1< b < 1+x_3$ implies $1\leq b-1 < 1+x_3$.
Finally, if $q_i = (1+x_3)(1+(1+x_3)x_2)$, we again write $b=\alpha(1+x_3)+\beta$, where $0\leq \beta < 1+x_3$ and $0\leq \alpha < 1+x_1(1+x_3)(1+(1+x_3)x_2)$. 
Moreover, in the case that $\alpha \neq x_1(1+x_3)(1+(1+x_3)x_2)$,  we write $\alpha = \gamma x_1 + \delta$ with $\gamma = (\vep(1+(1+x_3)x_2)+\eta)$, where $0\leq \delta < x_1$, $0\leq \vep < (1+x_3)$, and $0\leq \eta \leq (1+x_3)x_2$.
We consider the following possible cases:

\textbf{Subcase 1 of 5:} Suppose $\alpha = x_1(1+x_3)(1+(1+x_3)x_2)$. 
If $x_1=1$, note that $\beta \neq 0$ since $h((1+x_3)^2(1+(1+x_3)x_2)) = 1$. 
Thus, we may consider $c=(1+x_3)^2(1+(1+x_3)x_2) < (1+x_3)^2(1+(1+x_3)x_2)+\beta = b$.
With this choice of $c$, it is straightforward to verify that both sides of~\eqref{eqn 2} are equal to $0$. 
Otherwise, if $x_1 > 1$, we consider $c=1$. 
Since $x_1>1$ implies $-x_1(1+x_3)(1+(1+x_3)x_2) < (\beta - x_3 - 2)(1+(1+x_3)x_2) < 0$, it is straightforward to verify that both sides of~\eqref{eqn 2} are equal to $(1+x_3)(1+(1+x_3)x_2)-1$.

\textbf{Subcase 2 of 5:} Suppose $\alpha \neq x_1(1+x_3)(1+(1+x_3)x_2)$ with $\delta > 0$ and $\beta > 0$. Then, choosing $c = (\gamma x_1+ 1)(1+x_3) < (\gamma x_1+\delta)(1+x_3)+\beta = b$, it follows that both sides of~\eqref{eqn 2} are equal to $0$ since $0\leq \delta - 1 < x_1 - 1$ and $0<\beta < 1+x_3$ together imply $0 < (\delta-1)(1+x_3)+\beta < x_1(1+x_3)$. 

\textbf{Subcase 3 of 5:} Suppose $\alpha \neq x_1(1+x_3)(1+(1+x_3)x_2)$ with $\delta > 0$ and $\beta = 0$. Note that $\delta \neq 1$ since $h((\gamma x_1+1)(1+x_3))=1$. 
Therefore, $2\leq \delta < x_2$, so we may consider $c=(\gamma x_1+1)(1+x_3) < (\gamma x_1 + \delta)(1+x_3)=b$. 
Given this choice, we find that both sides of~\eqref{eqn 2} are equal to $0$ since $0<\delta < x_1$.

\textbf{Subcase 4 of 5:} Suppose $\alpha \neq x_1(1+x_3)(1+(1+x_3)x_2)$ with $\delta = 0$ and $\beta > 0$. 
If $\beta > \vep$, then $\beta \neq 1$ since otherwise, $\beta = 1$ would force $\vep = 0$ and $h(\eta x_1(1+x_3)+1) = 1$.
Thus, it must be that $\beta > 1$, thereby allowing us to consider $c = \eta x_1(1+x_3) < (\vep(1+(1+x_3)x_2)+\eta)x_1(1+x_3) + \beta = b$.
With this choice of $c$ and since $\beta - 1\geq \vep$ with $\beta > 1$, it is straightforward to verify both sides of~\eqref{eqn 2} are equal to $\vep(1+(1+x_3)x_2)$.
Now, if $\beta< \vep$, we have that $\vep > 1$ since we assumed $\beta > 0$. 
Therefore, we consider $c=(1+(1+x_3)x_2)x_1(1+x_3)+1 < (\vep(1+(1+x_3)x_2)+\eta)x_1(1+x_3) + \beta = b$. 
Then, both sides of~\eqref{eqn 2} are equal to $(\vep - 1)(1+(1+x_3)x_2)+\eta - 1$.
Lastly, if $\beta = \vep$, we consider two possibilities. If $\eta > 0 $, we may again choose $c=(1+(1+x_3)x_2)x_1(1+x_3)+1 < (\vep(1+(1+x_3)x_2)+\eta)x_1(1+x_3) + \beta = b$ and find that both sides of~\eqref{eqn 2} are equal to $(\vep - 1)(1+(1+x_3)x_2)+\eta - 1$.
Otherwise, if $\eta = 0$, note that $\vep = \beta \neq 1$ since otherwise, $h(b) = h((1+(1+x_3)x_2)x_1(1+x_3) + 1) = 1$.
Therefore, the same value of $c$, namely $c=(1+(1+x_3)x_2)x_1(1+x_3)+1$, will again be strictly less than $b$, from which it follows that both sides of~\eqref{eqn 2} are equal to $(\vep - 1)(1+(1+x_3)x_2)+\eta - 1$.

\textbf{Subcase 5 of 5:} Suppose $\alpha \neq x_1(1+x_3)(1+(1+x_3)x_2)$ with $\delta = 0$ and $\beta = 0$.
In this case, $b=\gamma x_1(1+x_3)$.
Moreover, note that $\gamma > 0$ since otherwise, $b=0$ contradicting our bounds on $b$. 
If $x_1=1$, then $h(b) = h(\gamma(1+x_3)) = 1$.
Hence, we need only consider when $x_1>1$.
Since $\gamma > 0$ and $x_2 > 1$, we may take $c=1+x_3 < \gamma x_1(1+x_3) = b$, from which it is straightforward to find that both sides of~\eqref{eqn 2} are equal to $\gamma - 1$.

This completes our second case.

\textbf{Case:} $q_j = (1+(1+x_3)(1+(1+x_3)x_2)x_1)(1+(1+x_3)x_2)$. 
Thus, we consider $1\leq b \leq (1+(1+x_3)(1+(1+x_3)x_2)x_1)(1+(1+x_3)x_2) - 1$. Again, we will start by identifying those values of $b$ that satisfy~\eqref{eqn 1} and~\eqref{eqn 3}. 
It is straightforward to verify that
\begin{align*}
h(b) = b - x_1\floor{\frac{b(1+x_3)}{1+x_1(1+x_3)(1+(1+x_3)x_2)}} - x_2\floor{\frac{b(1+x_3)}{1+(1+x_3)x_2}}\, .
\end{align*}
Writing $b=\alpha(1+(1+x_3)x_2)+\beta$, where $0\leq \beta < 1+x_3$ and $0\leq \alpha < 1+x_1(1+x_3)(1+(1+x_3)x_2)$ for $\alpha,\beta \in \Z$, it follows that
\begin{align*}
h(b) &= h(\alpha(1+(1+x_3)x_2)+\beta) \\
&= \alpha + \beta - x_1\floor{\frac{(\alpha(1+(1+x_3)x_2)+\beta)(1+x_3)}{1+x_1(1+x_3)(1+(1+x_3)x_2)}} - x_2\floor{\frac{\beta(1+x_3)}{1+(1+x_3)x_2}}\, .
\end{align*}
There are two different possibilities for both $\alpha$ and $\beta$: either $\alpha = x_1(1+x_3)(1+(1+x_3)x_2)$ or $\alpha = \vep x_1+\eta$ where $0\leq \eta < x_1$ and $0\leq \vep < (1+x_3)(1+(1+x_3)x_2)$, and either $\beta = (1+x_3)x_2$ or $\beta = \gamma x_2+\delta$ where $0\leq \delta < x_2$ and $0\leq \gamma < 1+x_3$.
We consider the following subcases.

\textbf{Subcase 1 of 4:} Suppose $\alpha = x_1(1+x_3)(1+(1+x_3)x_2)$ and $\beta = (1+x_3)x_2$.
Then,
\begin{align*}
h(b) &= x_1(1+x_3)(1+(1+x_3)x_2) +(1+x_3)x_2  \\ &\quad - x_1\floor{\frac{x_1(1+x_3)^2(1+(1+x_3)x_2)^2+(1+x_3)^2x_2}{1+x_1(1+x_3)(1+(1+x_3)x_2)}} - x_2\floor{\frac{(1+x_3)^2x_2}{1+(1+x_3)x_2}}  \\
&= x_1 + x_2 \\
&> 1.
\end{align*}

\textbf{Subcase 2 of 4:} Suppose $\alpha = x_1(1+x_3)(1+(1+x_3)x_2)$ and $\beta\neq (1+x_3)x_2$. 
Writing $\beta = \gamma x_2+\delta$, where $0\leq \delta < x_2$ and $0\leq \gamma < 1+x_3$, we have that
\begin{align*}
h(b) &= x_1(1+x_3)(1+(1+x_3)x_2) + \gamma x_2 + \delta \\ &\quad - x_1\floor{\frac{(x_1(1+x_3)(1+(1+x_3)x_2)^2 + \beta)(1+x_3)}{1+x_1(1+x_3)(1+(1+x_3)x_2)}} - x_2\floor{\frac{(\gamma x_2 + \delta)(1+x_3)}{1+(1+x_3)x_2}}  \\
&= \delta - x_1\floor{\frac{\beta(1+x_3) - (1+x_3)(1+(1+x_3)x_2)}{1+x_1(1+x_3)(1+(1+x_3)x_2)}} - x_2\floor{\frac{\delta(1+x_3)-\gamma}{1+(1+x_3)x_2}} \\
&= \delta + x_1 - x_2\cdot \left. \begin{cases}-1, & \delta = 0, \gamma > 0 \\ \phantom{-}0, & \text{otherwise} \end{cases}\right\} \, .
\end{align*}
If $\delta = 0$ and $\gamma > 0$, then $h(b) = x_1 + x_2 > 1$.
Otherwise, we have that $h(b) = \delta + x_1$.
For this to be equal to $1$, it must be the case that $\delta = 0$ and $x_1 = 1$ since $x_1\geq 1$. 
Moreover, note that $\beta = 0$ here since $\delta = 0$ implies $\gamma  = 0$ (otherwise, we are in the previous case that $\delta = 0$ and $\gamma > 0$).
Therefore, when $x_1=1$, we have that $h((1+x_3)(1+(1+x_3)x_2)^2) = 1$.

\textbf{Subcase 3 of 4:} Suppose $\alpha \neq x_1(1+x_3)(1+(1+x_3)x_2)$ and $\beta = (1+x_3)x_2$. 
Writing $\alpha = \vep x_1+\eta$, where $0\leq \eta < x_1$ and $0\leq \vep < (1+x_3)(1+(1+x_3)x_2)$, we have that
\begin{align*}
h(b) &= \vep x_1+\eta + (1+x_3)x_2 - x_1\floor{\frac{((\vep x_1+\eta)(1+(1+x_3)x_2)+(1+x_3)x_2)(1+x_3)}{1+x_1(1+x_3)(1+(1+x_3)x_2)}} \\ &\qquad \qquad - x_2\floor{\frac{(1+x_3)^2x_2}{1+(1+x_3)x_2}}  \\
&= \eta - x_1\floor{\frac{(\eta(1+(1+x_3)x_2)+(1+x_3)x_2)(1+x_3)-\vep}{1+x_1(1+x_3)(1+(1+x_3)x_2)}}  + x_2\, .
\end{align*}
Given that $0\leq \eta < x_1$ and $0\leq \vep < (1+x_3)(1+(1+x_3)x_2)$, observe that 
\begin{align*}
\abs{(\eta(1+(1+x_3)x_2)+(1+x_3)x_2)(1+x_3)-\vep}<1+x_1(1+x_3)(1+(1+x_3)x_2).
\end{align*}
Therefore, $\floor{\frac{(\eta(1+(1+x_3)x_2)+(1+x_3)x_2)(1+x_3)-\vep}{1+x_1(1+x_3)(1+(1+x_3)x_2)}}$ is equal to either $0$ or $-1$. 
If
\begin{align*}
\floor{\frac{(\eta(1+(1+x_3)x_2)+(1+x_3)x_2)(1+x_3)-\vep}{1+x_1(1+x_3)(1+(1+x_3)x_2)}} = -1\, ,
\end{align*}
then $h(b) = \eta + x_1 + x_2 > 1$.
Otherwise, if
\begin{align*}
\floor{\frac{(\eta(1+(1+x_3)x_2)+(1+x_3)x_2)(1+x_3)-\vep}{1+x_1(1+x_3)(1+(1+x_3)x_2)}} = 0\, ,
\end{align*}
then either (A) $0<\eta < x_1$ or (B) $\eta = 0$ with $0\leq \vep \leq (1+x_3)^2x_2$.
If (A) holds, then $h(b) = \eta + x_2 > 1$ since $\eta > 0$. If (B) holds, then our same equation forces $\eta = 0$ and $x_2=1$ when $\vep \leq (1+x_3)^2$, which means that $h(\vep x_1(2+x_3)+(1+x_3)) = 1$ whenever $0\leq \vep \leq (1+x_3)^2$.

\textbf{Subcase 4 of 4:} Suppose $\alpha \neq x_1(1+x_3)(1+(1+x_3)x_2)$ and $\beta \neq (1+x_3)x_2$. 
Writing $\alpha = \vep x_1+\eta$ where $0\leq \eta < x_1$ and $0\leq \vep < (1+x_3)(1+(1+x_3)x_2)$ and $\beta = \gamma x_2+\delta$ where $0\leq \delta < x_2$ and $0\leq \gamma < 1+x_3$, we have that
\begin{align*}
h(b) &= \vep x_1+\eta + \gamma x_2 + \delta - x_1\floor{\frac{((\vep x_1+\eta)(1+(1+x_3)x_2)+\gamma x_2+\delta)(1+x_3)}{1+x_1(1+x_3)(1+(1+x_3)x_2)}} \\ &\qquad \qquad - x_2\floor{\frac{(\gamma x_2+\delta)(1+x_3)}{1+(1+x_3)x_2}}  \\
&= \eta + \delta - x_1\floor{\frac{(\eta(1+(1+x_3)x_2)+\gamma x_2+\delta)(1+x_3)-\vep}{1+x_1(1+x_3)(1+(1+x_3)x_2)}}  - x_2\floor{\frac{\delta(1+x_3)-\gamma}{1+(1+x_3)x_2}} \\
&= \eta + \delta - x_1\floor{\frac{(\eta(1+(1+x_3)x_2)+\gamma x_2+\delta)(1+x_3)-\vep}{1+x_1(1+x_3)(1+(1+x_3)x_2)}}  - x_2\cdot \left. \begin{cases}-1, & \delta = 0, \gamma > 0 \\ \phantom{-}0, & \text{otherwise} \end{cases}\right\} \, .
\end{align*}
Given the bounds on $\eta$, $\gamma$, $\delta$, and $\vep$, observe that
\begin{align*}
\abs{(\eta(1+(1+x_3)x_2)+\gamma x_2+\delta)(1+x_3)-\vep}<1+x_1(1+x_3)(1+(1+x_3)x_2).
\end{align*}
Therefore, \[
\floor{\frac{(\eta(1+(1+x_3)x_2)+\gamma x_2 + \delta)(1+x_3)-\vep}{1+x_1(1+x_3)(1+(1+x_3)x_2)}}
\]
is equal to either $0$ or $-1$. 
\begin{enumerate}
\item[(i)] Suppose $\delta=0$ and $\gamma > 0$. If
\begin{align*}
\floor{\frac{(\eta(1+(1+x_3)x_2)+\gamma x_2 + \delta)(1+x_3)-\vep}{1+x_1(1+x_3)(1+(1+x_3)x_2)}} = -1\, ,
\end{align*}
then $h(b) = \eta + x_1+x_2 > 1$.
Otherwise, if
\begin{align*}
\floor{\frac{(\eta(1+(1+x_3)x_2)+\gamma x_2 + \delta)(1+x_3)-\vep}{1+x_1(1+x_3)(1+(1+x_3)x_2)}} = 0\, ,
\end{align*}
then either (A) $0<\eta < x_1$ or (B) $\eta = 0$ with $0\leq \vep \leq \gamma x_2(1+x_3)$. 
If (A) holds, then $h(b)=\eta + x_2 > 1$ since $\eta > 0$ and $x_2\geq 1$. If (B) holds, then our same equation forces $\eta = 0$ and $x_2=1$ when $\vep \leq \gamma (1+x_3)$, which means that $h(\vep x_1(2+x_3)+\gamma) = 1$ whenever $0\leq \vep \leq \gamma(1+x_3)$. 
\item[(ii)] Suppose otherwise, i.e., $\delta=0$ and $\gamma > 0$ does not hold. If
\begin{align*}
\floor{\frac{(\eta(1+(1+x_3)x_2)+\gamma x_2 + \delta)(1+x_3)-\vep}{1+x_1(1+x_3)(1+(1+x_3)x_2)}} = -1\, ,
\end{align*}
note that $\eta = 0$. Then, $h(b) = \delta + x_1$. For this to equal $1$, it must be the case that $\delta = 0$ (and hence $\gamma = 0$ since otherwise, we would be in the previous case) and $x_1 = 1$. 
Since $\gamma = \delta = 0$, we have that $\beta = 0$.
Therefore, given that $\eta = 0$ as well, it must be that $\vep > 0$ since otherwise, $b=0$ contradicting our bounds on $b$. 
In this case, we have that $h(\vep(1+(1+x_3)x_2))=1$ whenever $0< \vep < (1+x_3)(1+(1+x_3)x_2)$.
Otherwise, if
\begin{align*}
\floor{\frac{(\eta(1+(1+x_3)x_2)+\gamma x_2 + \delta)(1+x_3)-\vep}{1+x_1(1+x_3)(1+(1+x_3)x_2)}} = 0\, ,
\end{align*}
then either (A) $0<\eta < x_1$ or (B) $\eta = 0$ with $0\leq \vep \leq (\gamma x_2+\delta)(1+x_3)$.
If (A) holds, then $h(b) = \eta + \delta$, which setting equal to $1$ forces $\eta = 1$ and $\delta = 0$ since $\eta > 0$. 
Note that $\delta = 0$ forces $\gamma = 0$ since otherwise, we would be in the previous case. 
Therefore, we have that $h((\vep x_1 + 1)(1+(1+x_3)x_2)) = 1$ whenever $0\leq \vep < (1+x_3)(1+(1+x_3)x_2)$.
On the other hand, if (B) holds, then our same equation forces $\eta = 0$ and $\delta = 1$ when $0\leq \vep \leq (\gamma x_2+1)(1+x_3)$, which means that $h(\vep x_1(1+(1+x_3)x_2)+\gamma x_2 + 1) = 1$ whenever $0\leq \gamma < 1+x_3$ and $0\leq \vep \leq (\gamma x_2 +1)(1+x_3)$.
\end{enumerate}

We summarize the values of $b$ for which $h(b) = 1$ that were just derived:
\begin{itemize}
\item If $x_1 = 1$, $\beta = 0$, and $0<\vep\leq (1+x_3)(1+(1+x_3)x_2)$, we have $b = \vep(1+(1+x_3)x_2)$.
\item If $x_2 = 1$, $\delta = \eta = 0$, $0<\gamma \leq 1+x_3$, and $0\leq \vep \leq \gamma(1+x_3)$, we have $b=\vep x_1(2+x_3)+\gamma$.
\item If $\beta = 0$, $\eta = 1$, and $0\leq \vep <(1+x_3)(1+(1+x_3)x_2)$, we have $b=(\vep x_1+1)(1+(1+x_3)x_2)$.
\item If $\eta = 0$, $\delta = 1$, $0\leq \gamma < 1+x_3$, and $0\leq \vep \leq (\gamma x_2+1)(1+x_3)$, we have $b=\vep x_1(1+(1+x_3)x_2)+\gamma x_2 + 1$.
\end{itemize}

Our next goal is to establish that~\eqref{eqn 2} is always satisfied; recall that we are in the case where $q_j = (1+(1+x_3)(1+(1+x_3)x_2)x_1)(1+(1+x_3)x_2)$. 
If $q_i = (1+(1+x_3)(1+(1+x_3)x_2)x_1)(1+(1+x_3)x_2)$, the result is trivial. 
If $q_i = (1+x_3)(1+x_1(1+x_3)(1+(1+x_3)x_2))$, we write $b=\alpha(1+(1+x_3)x_2)+\beta$, where $0\leq \beta < 1+(1+x_3)x_2$ and $0\leq \alpha < 1+x_1(1+x_3)(1+(1+x_3)x_2)$.
Note that $b\neq 1+(1+x_3)x_2$ since $h(1+(1+x_3)x_2) = 1$. 
If $b > 1+(1+x_3)x_2$, we set $c=1+(1+x_3)x_2$ from which it is straightforward to compute that both sides of~\eqref{eqn 2} are equal to
\begin{align*}
(\alpha - 1)(1+x_3) + \floor{\frac{\beta(1+x_3)}{1+(1+x_3)x_2}}\, .
\end{align*}
Otherwise, if $b<1+(1+x_3)x_2$, note that $\alpha = 0$ and $b = \beta$. 
Therefore, to ensure we satisfy~\eqref{eqn 1}, we consider $2\leq \beta < 1+(1+x_3)x_2$. 
Moreover, note that $\beta \not\equiv 1\bmod x_2$ since otherwise $h(b) = h(\beta) = 1$. 
Now, suppose $\beta = (1+x_3)x_2$. If $x_2 = 1$, then $h(\beta) = h(1+x_3) = 1$, so we may assume $x_2 > 1$.
Setting $c=1$ and since $x_2 > 1$, it is straightforward to compute that both sides of~\eqref{eqn 2} are equal to $x_3$. 
Otherwise, if $\beta \neq (1+x_3)x_2$, we write $\beta = \gamma x_2 + \delta$, where $0\leq \delta < x_2$ and $0\leq \gamma < 1+x_3$. 
Note that $\delta \neq 1$ since $h(\gamma x_2 + 1) = 1$ for $0\leq \gamma < 1+x_3$. 
Suppose $\delta > 1$. Then, choosing $c=\gamma x_2 + 1 < \gamma x_2 + \delta = b$, it is straightforward to show that both sides of~\eqref{eqn 2} are equal to $0$. 
On the other hand suppose $\delta = 0$, so $\beta = \gamma x_2$, where $\gamma > 0$. 
If $x_2 = 1$, then $h(\beta) = 1$, so we may assume $x_2 > 1$. 
Taking $c=1$ and observing that $x_2>1$ implies $-x_2(1+x_3)< -(1+x_3)-\gamma$, it is straightforward to compute that both sides of~\eqref{eqn 2} are equal to $\gamma - 1$. 

Finally, if $q_i = (1+x_3)(1+(1+x_3)x_2)$, the analysis becomes a bit more complicated. 
We start by again writing $b = \alpha(1+(1+x_3)x_2)+\beta$, where $0\leq \beta < 1+(1+x_3)x_2$ and $0\leq \alpha < 1+x_1(1+x_3)(1+(1+x_3)x_2)$.
Suppose $\alpha = x_1(1+x_3)(1+(1+x_3)x_2)$.
If $x_1=1$, then $\beta \neq 0$ since otherwise $h(b) = 1$.
Thus, we may consider $c=(1+x_3)(1+(1+x_3)x_2)^2 < (1+x_3)(1+(1+x_3)x_2)^2+\beta = b$ from which it is straightforward to compute that both sides of~\eqref{eqn 2} are equal to $0$.
If $x_1>1$, observe that \[
-x_1(1+x_3)(1+(1+x_3)x_2) < -(1+x_3)(2+(1+x_3)x_2) \leq (\beta-1)(1+x_3) - (1+x_3)(1+(1+x_3)x_2) < 0 \, .
\]
Choosing $c=1$, the previous inequality readily gives that both sides of~\eqref{eqn 2} are equal to $(1+x_3)(1+(1+x_3)x_2) - 1$.

Now, suppose $\alpha \neq x_1(1+x_3)(1+(1+x_3)x_2)$.
Then, we may write $\alpha = \vep x_1 + \eta$, where $0\leq \eta < x_1$ and $0\leq \vep < (1+x_3)(1+(1+x_3)x_2)$, and so $b=(\vep x_1 + \eta)(1+(1+x_3)x_2)+\beta$. 
Suppose $\beta = (1+x_3)x_2$.
If $\eta \geq 1$, then we may consider $c=(\vep x_1+1)(1+(1+x_3)x_2)<(\vep x_1+\eta)(1+(1+x_3)x_2) + (1+x_3)x_2 = b$ from which it is straightforward to show that both sides of~\eqref{eqn 2} are equal to $0$. 
Otherwise, if $\eta = 0$, note that we may assume $x_2>1$ since $x_2=1$ gives that $h(b) = h(\vep x_1(2+x_3)+(1+x_3)) = 1$.

We consider two possible cases, namely when $0 \leq \vep < 1+x_3$ and when $1+x_3\leq \vep <(1+x_3)(1+(1+x_3)x_2)$. 
If $0 \leq \vep < 1+x_3$, we consider $c = \vep x_1(1+(1+x_3)x_2)+x_2x_3 + 1$ which is strictly less than $b=\vep x_1(1+(1+x_3)x_2)+(1+x_3)x_2$ as $x_2 > 1$. 
With this choice of $c$ and since $0\leq \vep < 1+x_3$, it is straightforward to show that both sides of~\eqref{eqn 2} are equal to $0$. 
On the other hand, if $1+x_3\leq \vep <(1+x_3)(1+(1+x_3)x_2)$, we may consider $c=1+x_1(1+x_3)(1+(1+x_3)x_2) < \vep x_1(1+(1+x_3)x_2)+(1+x_3)x_2 = b$ from which it is straightforward to compute that both sides of~\eqref{eqn 2} are equal to
\begin{align*}
\vep - (1+x_3) + \floor{\frac{(1+x_3)^2x_2 - \vep}{1+x_1(1+x_3)(1+(1+x_3)x_2)}}\, .
\end{align*}
Now, suppose $\beta \neq (1+x_3)x_2$. 
Then, we may write $\beta = \gamma x_2 + \delta$, where $0\leq \delta < x_2$ and $0\leq \gamma < 1+x_3$, and so $b$ can be written as $b=(\vep x_1 + \eta)(1+(1+x_3)x_2) + \gamma x_2 + \delta$. 
We consider the following possible subcases.

\textbf{Subcase 1 of 4:} Suppose $\eta > 0$ and $\delta > 0$. 
We consider $c=(\vep x_1 + 1)(1+(1+x_3)x_2) < (\vep x_1 + 1)(1+(1+x_3)x_2) + \gamma x_2 + \delta = b$. 
Since $0 < \eta < x_1$ and $0 < \gamma x_2 + \delta <(1+x_3)x_2$, it follows that $0 < (\eta - 1)(1+(1+x_3)x_2)+\gamma x_2 +\delta < x_1(1+(1+x_3)x_2)$.
Therefore, it is straightforward to verify that both sides of~\eqref{eqn 2} are equal to $0$.

\textbf{Subcase 2 of 4:} Suppose $\eta = 0$ and $\delta > 0$. 
If $\vep \leq (\gamma x_2 + 1)(1+x_3)$, note that $\delta \neq 1$ since otherwise, $h(b) = h(\vep x_1(1+(1+x_3)x_2) + \gamma x_2 + 1) = 1$.
Thus, we have that $\delta > 1$, and we consider $c=\vep x_1(1+(1+x_3)x_2) + \gamma x_2 + 1 < \vep x_1(1+(1+x_3)x_2) + \gamma x_2 + \delta = b$. 
Given that $\vep \leq (\gamma x_2 + 1)(1+x_3)$ and $1< \delta < x_2$, it is straightforward to verify that our choice of $c$ gives that both sides of~\eqref{eqn 2} are equal to $0$. 
Otherwise, if $(\gamma x_2 + 1)(1+x_3) < \vep$ (and hence, $\vep > 1+x_3$), we consider $c=1+x_1(1+x_3)(1+(1+x_3)x_2) < \vep x_1(1+(1+x_3)x_2)+\gamma x_2 + \delta = b$.
With this choice of $c$, it is straightforward to verify that both sides of~\eqref{eqn 2} are equal to 
\begin{align*}
\vep - (1+x_3) + \floor{\frac{(\gamma x_2 + \delta)(1+x_3) - \vep}{1+x_1(1+x_3)(1+(1+x_3)x_2)}}\, .
\end{align*}

\textbf{Subcase 3 of 4:} Suppose $\eta > 0$ and $\delta = 0$. 
If $\gamma > 0$, we may consider $c=(\vep x_1 + 1)(1+(1+x_3)x_2) < (\vep x_1 + \eta)(1+(1+x_3)x_2) + \gamma x_2 = b$ from which it is straightforward to compute that both sides of~\eqref{eqn 2} are equal to $0$. 
On the other hand, if $\gamma = 0$, note that $\eta \neq 1$ since otherwise, $h(b) = h((\vep x_1 + \eta)(1+(1+x_3)x_2)) = 1$.
Therefore, we may again choose $c = (\vep x_1 + 1)(1+(1+x_3)x_2) < (\vep x_1 + \eta)(1+(1+x_3)x_2) = b$.
Since $1< \eta < x_1$, it is straightforward to verify that both sides of~\eqref{eqn 2} are equal to $0$. 

\textbf{Subcase 4 of 4:} Suppose $\eta = \delta = 0$. 
Further suppose $\gamma x_2(1+x_3) < \vep$. 
If $\gamma >0$, then it follows that $\gamma x_2 \geq 1$.
Therefore, our assumed inequality implies  $\vep > 1+x_3$, so we may consider $c=1+x_1(1+x_3)(1+(1+x_3)x_2) < \vep x_1 (1+(1+x_3)x_2)+\gamma x_2 = b$.
Since $\gamma x_2(1+x_3) < \vep$, our choice of $c$ readily gives that both sides of~\eqref{eqn 2} are equal to $\vep - x_3 -2$. 
Otherwise, if $\gamma = 0$ (and hence, $\vep > 0$ since we assumed $\gamma x_2(1+x_3) < \vep$), we may assume $x_1 > 1$ since otherwise, we would have that $h(b) = h(\vep (1+(1+x_3)x_2)) = 1$. 
Thus, since $x_1 > 1$, we may take $c=1+(1+x_3)x_2 < \vep x_1 (1+(1+x_3)x_2) = b$. 
Observe that the bounds on $\vep$ and $x_1 > 1$ imply $-x_1(1+x_3)(1+(1+x_3)x_2) \leq -2(1+x_3)(1+(1+x_3)x_2) < -(1+x_3)(1+(1+x_3)x_2) - \vep < 0$.
Consequently, it is straightforward to verify that both sides of~\eqref{eqn 2} are equal to $\vep - 1$. 
Now, suppose $\gamma x_2(1+x_3) > \vep$. 
Note that $\gamma\neq 0$ since otherwise, $\vep < 0$ contradicting our initial bounds on $\vep$. 
Thus, we have that $\gamma > 0$.
Moreover, if $x_2 = 1$, it follows that $h(b) = h(\vep x_1 (2+x_3)+\gamma) = 1$, so we may assume $x_2 > 1$. 
Given the addition restriction that $\vep \leq (\gamma x_2-1)(1+x_3)$, we may choose $c=1$ from which the inequality $\vep \leq (\gamma x_2-1)(1+x_3)$ readily implies both sides of~\eqref{eqn 2} are equal to $\vep$.
However, for $(\gamma x_2-1)(1+x_3) < \vep < \gamma x_2(1+x_3)$, we consider $c=1+x_1(1+x_3)(1+(1+x_3)x_2)$.
Note that $x_2>1$ and $\gamma > 0$ together with our restriction on $\vep$ imply that $\vep > 1+x_3$. 
Therefore, we satisfy $c < \vep x_1(1+(1+x_3)x_2)+\gamma x_2 = b$, and a straightforward computation gives that both sides of~\eqref{eqn 2} are equal to $\vep - (1+x_3)$.
Finally, suppose $\vep = \gamma x_2 (1+x_3)$.
Given this equality, note that neither $\gamma$ nor $\vep$ can be equal to $0$ since otherwise, we would have $\eta = \delta = \gamma = \vep = 0$, implying $b=0$. 
This, of course, contradicts the bounds on $b$. 
Moreover, we may again assume $x_2>1$ (and thus, $\vep > 1+x_3$) since otherwise, $h(b) = h(\vep x_1(2+x_3)+\gamma) = 1$.
Since $\vep > 1+x_3$, we may consider $c=1+x_1(1+x_3)(1+(1+x_3)x_2) < \vep x_1(1+(1+x_3)x_2)+\gamma x_2 = b$.
This choice of $c$ readily gives that both sides of~\eqref{eqn 2} are equal to $\vep - (1+x_3)$. 

In any case, we find that both sides of~\eqref{eqn 2} are equivalent for each possible $q_i$, thereby completing our third and final case.
Thus, we have established IDP for $\br$-vectors of type $(vi)$.

\subsection{Proof of IDP for type $(vii)$ in Theorem~\ref{thm:3suppclassify}}
Here, we verify IDP for $\br$-vectors of type $(vii)$ using Theorem~\ref{thm:idpreflexives}. 
  Again, we must consider three cases corresponding to three possible values of $q_j$.
  
  \textbf{Case: $q_j=1+x_3$}. 
  Since $1\leq b \leq x_3$ in this case, it is straightforward to verify that $h(b) = b$. 
  Hence, the $b$-values we are required to check in~\eqref{eqn 1}  are $2\leq b \leq x_3$. 
  To verify that~\eqref{eqn 2} always has the desired solution, we consider three cases.
  If $q_i=1+x_3$, the result is trivial.
  If $q_i=(1+x_3)(1+x_1(1+x_3))$, then we may select $c=1$, from which it follows that both sides of~\eqref{eqn 2} are equal to $(b-1)(1+x_1(1+x_3))$.
  If $q_i=(1+(1+x_3)x_1)(1+(1+x_3)x_2)$, then we may again set $c=1$, from which it is straightforward to compute that both sides of~\eqref{eqn 2} are equal to $(b-1)(x_1+x_2+x_1x_2(1+x_3))$.
  This completes our first case.
  
  \textbf{Case: $q_j=(1+x_3)(1+x_1(1+x_3))$}.
  It is straightforward to verify that 
  \[
    h(b)=b-x_1\left\lfloor \frac{b}{1+x_1(1+x_3)} \right\rfloor - x_3\left\lfloor \frac{b}{1+x_3} \right\rfloor \, ,
  \]
  where the values of $b$ range from $1$ to $(1+x_3)(1+x_1(1+x_3))-1$.
  To verify that~\eqref{eqn 2} always has the desired solution, we consider three cases.
  If $q_i=(1+x_3)(1+x_1(1+x_3))$, the result is trivial.
  If $q_i=1+x_3$, then we write $b = \alpha (1+x_1(1+x_3)) + \beta$, where $0\leq \beta < 1+x_1(1+x_3)$ and $0\leq \alpha < 1+x_3$ for $\alpha,\beta \in \Z$.
  Consequently, we have that
  \begin{align*}
  	h(b) &= h(\alpha (1+x_1(1+x_3)) + \beta) \\ 
  	&= \alpha (1+x_1(1+x_3)) + \beta - \alpha x_1 - \alpha x_1x_3 - x_3\left\lfloor \frac{\alpha+\beta}{1+x_3} \right\rfloor \\
  	&= \alpha + \beta - x_3\left\lfloor \frac{\alpha+\beta}{1+x_3} \right\rfloor\, .
  \end{align*}
  If $\beta > 0$, we may select $c=1$, from which it follows that both sides of~\eqref{eqn 2} are equal to $\alpha$. 
  If $\beta = 0$, then our formula for $h(b)$ reduces to 
  \begin{align*}
  h(b) = h(\alpha (1+x_1(1+x_3))) = \alpha
  \end{align*}
  since $0\leq \alpha < 1+x_3$. 
  Thus, to satisfy~\eqref{eqn 1}, it must be that $\alpha > 1$, implying $b = \alpha(1+x_1(1+x_3)) > 1+x_1(1+x_3)$. In this case, taking $c = 1+x_1(1+x_3)$, it is straightforward to verify that both sides of ~\eqref{eqn 2} are equal to $\alpha - 1$, and~\eqref{eqn 3} is satisfied as $h(1+x_1(1+x_3)) = 1$.
  Finally, if $q_i = (1+(1+x_3)x_1)(1+(1+x_3)x_2)$, then we write $b = \alpha (1+x_3) + \beta$, where $0\leq \beta < 1+x_3$ and $0\leq \alpha < 1+x_1(1+x_3)$ for $\alpha,\beta \in \Z$.
  Consequently, since $0\leq \beta < 1+x_3$, we have that
  \begin{align*}
  	h(b) &= h(\alpha (1+x_3) + \beta) \\ 
  	&= \alpha (1+x_3) + \beta - x_1\left\lfloor \frac{\alpha(1+x_3)+\beta}{1+x_1(1+x_3)} \right\rfloor - \alpha x_3 - x_3\left\lfloor \frac{\beta}{1+x_3} \right\rfloor \\
  	&= \alpha  + \beta - x_1\left\lfloor \frac{\alpha(1+x_3)+\beta}{1+x_1(1+x_3)} \right\rfloor  \, .
  \end{align*}
  If $\beta > 0$, we may select $c=1$, from which it is straightforward to verify that both sides of~\eqref{eqn 2} are equal to $\alpha(1 + (1+x_3)x_2) + (\beta-1)x_2$ (since $0\leq \beta - 1 < x_3$).
  On the other hand, if $\beta = 0$, then our formula for $h(b)$ reduces to 
  \begin{align*}
  h(b) = h(\alpha(1+x_3)) = \alpha - \left\lfloor \frac{\alpha(1+x_3)}{1+x_1(1+x_3)} \right\rfloor.
  \end{align*}
  In order to satisfy~\eqref{eqn 1}, it must be that $\alpha > 1$, which implies $b=\alpha(1+x+3) > 1+x_3$. Thus, in this case, we consider $c = 1+x_3$. Clearly, $h(1+x_3) = 1$, giving~\eqref{eqn 3}, and moreover, it is straightforward to verify that both sides of ~\eqref{eqn 2} when $c=1+x_3$ are equal to $(\alpha - 1)(1+(1+x_3)x_2)$. This completes our second case.
  
   \textbf{Case: $q_j=(1+(1+x_3)x_1)(1+(1+x_3)x_2)$}.
  We first identify those values of $b$ that satisfy~\eqref{eqn 1} and~\eqref{eqn 3}.
  It is straightforward to verify that
  \[
    h(b)=b-x_1\left\lfloor \frac{b(1+x_3)}{(1+(1+x_3)x_1)(1+(1+x_3)x_2)}\right\rfloor -x_2\left\lfloor \frac{b(1+x_3)}{(1+(1+x_3)x_2)} \right\rfloor \, .
  \]
  Writing $b=\alpha (1+(1+x_3)x_2)+\beta$, where $0\leq \beta \leq (1+x_3)x_2$ and $0\leq \alpha \leq (1+x_3)x_1$, it follows that
  \begin{align*}
    h(b) &= h(\alpha(1+(1+x_3)x_2)+\beta) \\
    &= \alpha + \beta - x_1\left\lfloor \frac{\alpha(1+x_3)(1+(1+x_3)x_2) + \beta(1+x_3)}{(1+(1+x_3)x_1)(1+(1+x_3)x_2)}\right \rfloor - x_2\left\lfloor \frac{\beta(1+x_3)}{1+(1+x_3)x_2} \right\rfloor \, .
  \end{align*}
  Now, writing $\beta = \gamma x_2 + \delta$, where $0\leq \delta < x_2$ and $0\leq \gamma \leq 1+x_3$, it follows that
  \begin{align*}
  h(b) = \alpha + \delta - x_1\left\lfloor \frac{\alpha(1+x_3)(1+(1+x_3)x_2) + (\gamma x_2 + \delta)(1+x_3)}{(1+(1+x_3)x_1)(1+(1+x_3)x_2)}\right \rfloor - x_2\left\lfloor \frac{\delta(1+x_3) - \gamma}{1+(1+x_3)x_2} \right\rfloor \, .
  \end{align*}
  Since $0\leq \delta < x_2$ and $0\leq \gamma \leq 1+x_3$, observe that
  \begin{align*}
  \left\lfloor \frac{\delta(1+x_3) - \gamma}{1+(1+x_3)x_2} \right\rfloor = \begin{cases}
  -1, & \delta=0, \gamma >0 \\
  \phantom{-}0, & \text{otherwise} \quad .
\end{cases}
  \end{align*}
  We further write $\alpha = \vep x_1 + \eta$, where $0\leq \eta < x_1$ and $0 \leq \vep \leq 1+x_3$. Then,
  \begin{align*}
  	h(b) &= \eta + \delta - x_1\left\lfloor \frac{(\eta(1+x_3)-\vep)(1+(1+x_3)x_2) + (\gamma x_2 + \delta)(1+x_3)}{(1+(1+x_3)x_1)(1+(1+x_3)x_2)}\right \rfloor - x_2\left\lfloor \frac{\delta(1+x_3) - \gamma}{1+(1+x_3)x_2} \right\rfloor \\
    &= \eta + \delta - x_1\left\lfloor \frac{(\eta(1+x_3) - \vep + \gamma)(1+(1+x_3)x_2) + \delta(1+x_3)-\gamma}{(1+(1+x_3)x_1)(1+(1+x_3)x_2)}\right \rfloor - x_2\left\lfloor \frac{\delta(1+x_3) - \gamma}{1+(1+x_3)x_2} \right\rfloor   \, .
  \end{align*}
  Given the bounds on $\vep$, $\eta$, $\gamma$, and $\delta$, note that $-(1+x_3) \leq \delta(1+x_3) - \gamma < 1+(1+x_3)x_2$ and $-(1+x_3) \leq \eta(1+x_3) -\vep + \gamma \leq (1+x_3)x_1$. 
  Consequently, it follows that 
  \begin{align*}
  \abs{(\eta(1+x_3) - \vep + \gamma)(1+(1+x_3)x_2) + \delta(1+x_3)-\gamma}< (1+(1+x_3)x_1)(1+(1+x_3)x_2),
  \end{align*} 
  and this implies that $\left\lfloor \frac{(\eta(1+x_3) - \vep + \gamma)(1+(1+x_3)x_2) + \delta(1+x_3)-\gamma}{(1+(1+x_3)x_1)(1+(1+x_3)x_2)}\right \rfloor$ is equal to either $0$ or $-1$. 
  To resolve this floor function, we consider the following subcases which analyze the sign of the numerator of its argument.
  
  \textbf{Subcase 1 of 6:} Suppose $\eta =0$ and $\vep > \gamma$. 
  Then, since $\delta(1+x_3)-\gamma < 1+(1+x_3)x_2$, the numerator above will be negative, implying 
  \begin{align*}
  \left\lfloor \frac{(\eta(1+x_3) - \vep + \gamma)(1+(1+x_3)x_2) + \delta(1+x_3)-\gamma}{(1+(1+x_3)x_1)(1+(1+x_3)x_2)}\right \rfloor = -1.
  \end{align*} 
  Therefore, our equation for $h(b)$ simplifies to
  \begin{align*}
  	h(b) = \delta + x_1 - x_2\cdot \left.\begin{cases} -1, & \delta =0, \gamma > 0 \\ \phantom{-}0, & \text{otherwise}  \end{cases} \right\} .
  \end{align*}
  If $\delta = 0$ and $\gamma > 0$, then $h(b) = h(\vep x_1(1+(1+x_3)x_2) + \gamma x_2) = x_1 + x_2 > 1$.
  If $\delta = \gamma = 0$, then $h(b) = h(\vep x_1(1+(1+x_3)x_2)) = x_1$.
  Thus, if $x_1=1$, we have that $h(\vep x_1(1+(1+x_3)x_2)) = 1$ whenever $\vep > 0$. 
  If $\delta > 0$, then  $h(b) = h(\vep x_1(1+(1+x_3)x_2) + \gamma x_2 + \delta) = \delta + x_1 > 1$. 
  
  \textbf{Subcase 2 of 6:} Suppose $\eta = 0$ and $\vep < \gamma$. 
  Then, $\eta(1+x_3) - \vep + \gamma > 0$, and consequently, the numerator of our floor function argument will be positive.
  Hence, 
  \begin{align*}\left\lfloor \frac{(\eta(1+x_3) - \vep + \gamma)(1+(1+x_3)x_2) + \delta(1+x_3)-\gamma}{(1+(1+x_3)x_1)(1+(1+x_3)x_2)}\right \rfloor = 0,
  \end{align*} 
  simplifying our formula for $h(b)$ to
  \begin{align*}
  	h(b) = \delta - x_2\cdot \left.\begin{cases} -1, & \delta =0, \gamma > 0 \\ \phantom{-}0, & \text{otherwise} \end{cases} \right\} .
  \end{align*}
  If $\delta = 0$ and $\gamma > 0$, then $h(b) = h(\vep x_1(1+(1+x_3)x_2) + \gamma x_2) = x_2$.
  Thus, if $x_2=1$, we have that $h(b) = h(\vep x_1(2+x_3) + \gamma) = 1$ whenever $\vep < \gamma$.
  Otherwise, $h(b) = \delta$, which forces $\delta = 1$, i.e., $h(\vep x_1(1+(1+x_3)x_2) + \gamma x_2 + 1) = 1$ whenever $\vep < \gamma$.
  
  \textbf{Subcase 3 of 6:} Suppose $\eta = 0$ and $\vep = \gamma$. 
  Then, $\eta(1+x_3) - \vep + \gamma = 0$, so the numerator of our floor function argument reduces to $\delta(1+x_3) - \gamma$. 
  If $\delta = 0$ and $\gamma > 0$, then $\delta(1+x_3) - \gamma < 0$ which implies 
  \begin{align*}
  \left\lfloor \frac{(\eta(1+x_3) - \vep + \gamma)(1+(1+x_3)x_2) + \delta(1+x_3)-\gamma}{(1+(1+x_3)x_1)(1+(1+x_3)x_2)}\right \rfloor = -1.
  \end{align*}
  Hence, for $\vep = \gamma > 0$, $h(b) = h(\gamma x_1(1+(1+x_3)x_2) + \gamma x_2) = x_1 + x_2 > 1$. 
  If $\delta = \gamma = 0$, then $\delta(1+x_3) - \gamma = 0$ which implies 
  \begin{align*}
  \left\lfloor \frac{(\eta(1+x_3) - \vep + \gamma)(1+(1+x_3)x_2) + \delta(1+x_3)-\gamma}{(1+(1+x_3)x_1)(1+(1+x_3)x_2)}\right \rfloor = 0.
  \end{align*}
  Hence, $h(b) = h(0) = 0$.
  If $\delta > 0$, then $\delta(1+x_3) - \gamma > 0$ since $0\leq \gamma \leq (1+x_3)$. 
  Therefore, 
  \begin{align*}
  \left\lfloor \frac{(\eta(1+x_3) - \vep + \gamma)(1+(1+x_3)x_2) + \delta(1+x_3)-\gamma}{(1+(1+x_3)x_1)(1+(1+x_3)x_2)}\right \rfloor = 0.
  \end{align*} 
  As such, we have that $h(b) = h(\gamma x_1(1+(1+x_3)x_2) + \gamma x_2 + \delta) = \delta$, which forces $\delta = 1$, i.e., $h(b) = h(\gamma x_1(1+(1+x_3)x_2) + \gamma x_2 + 1) = 1$ for $0\leq \gamma \leq 1+x_3$.
  
  \textbf{Subcase 4 of 6:} Suppose $\eta = 1$ and $0\leq \vep < 1+x_3$. 
  Then, it follows that $\eta(1+x_3) - \vep + \gamma>0$.
  Consequently, since $\delta(1+x_3) - \gamma < 1+(1+x_3)x_2$, we have that the numerator of our floor function argument is positive, implying 
  \begin{align*}
  \left\lfloor \frac{(\eta(1+x_3) - \vep + \gamma)(1+(1+x_3)x_2) + \delta(1+x_3)-\gamma}{(1+(1+x_3)x_1)(1+(1+x_3)x_2)}\right \rfloor = 0
\end{align*}  
  for any $0\leq \gamma \leq 1+x_3$ and $0\leq \delta < x_2$. 
  As a result, if $\delta = 0$ and $\gamma > 0$, then $h(b) = h((\vep x_1 + 1)(1+(1+x_3)x_2) + \gamma x_2) = 1 + x_2 > 1$.
  If $\delta = \gamma = 0$, then $h(b) = h((\vep x_1 + 1)(1+(1+x_3)x_2)) = \eta = 1$ whenever $0\leq \vep < 1+x_3$.
  If $\delta > 0$, then $h(b) = h((\vep x_1 + 1)(1+(1+x_3)x_2) + \gamma x_2 + \delta) = 1 + \delta > 1$.
  
  \textbf{Subcase 5 of 6:} Suppose $\eta = 1$ and $\vep = 1+x_3$.
  Then, the numerator of our floor function argument reduces to $\gamma(1+x_3)x_2 + \delta(1+x_3)$, which is certainly nonnegative.
  Therefore, it follows that 
  \begin{align*}
  \left\lfloor \frac{(\eta(1+x_3) - \vep + \gamma)(1+(1+x_3)x_2) + \delta(1+x_3)-\gamma}{(1+(1+x_3)x_1)(1+(1+x_3)x_2)}\right \rfloor = 0,
  \end{align*} 
  which simplifies our formula for $h(b)$ to
  \begin{align*}
  h(b) = h((1+(1+x_3)x_1)(1+(1+x_3)x_2) + \gamma x_2 + \delta) = 1 + \delta - x_2\cdot \left. \begin{cases} -1, & \delta =0, \gamma > 0 \\ \phantom{-}0, & \text{otherwise} \end{cases} \right\} .
\end{align*}   
  Therefore, if $\delta = 0$ and $\gamma > 0$, it follows that $h(b) = h((1+(1+x_3)x_1)(1+(1+x_3)x_2) + \gamma x_2) = 1 + x_2 > 1$.
  Otherwise, $h(b) = h((1+(1+x_3)x_1)(1+(1+x_3)x_2) + \gamma x_2 + \delta) = 1 + \delta$. 
  So, for this to be equal to $1$, it must be that $\delta = \gamma = 0$, i.e., $h((1+(1+x_3)x_1)(1+(1+x_3)x_2)) = 1$. 
  
  \textbf{Subcase 6 of 6:} Suppose $\eta > 1$. 
  Then, it follows that $\eta(1+x_3) - \vep + \gamma>0$, and consequently, since $\delta(1+x_3) - \gamma < 1+(1+x_3)x_2$, we have that the numerator of our floor function argument is positive.
  Therefore, we have that 
  \begin{align*}
  \left\lfloor \frac{(\eta(1+x_3) - \vep + \gamma)(1+(1+x_3)x_2) + \delta(1+x_3)-\gamma}{(1+(1+x_3)x_1)(1+(1+x_3)x_2)}\right \rfloor = 0,
  \end{align*} 
  which simplifies our formula for $h(b)$ to
  \begin{align*}
  h(b) = h((\vep x_1 + \eta)(1+(1+x_3)x_2) + \gamma x_2 + \delta) = \eta + \delta - x_2\cdot \left. \begin{cases} -1, & \delta =0, \gamma > 0 \\ \phantom{-}0, & \text{otherwise} \end{cases} \right\} .
\end{align*}   
  Therefore, if $\delta = 0$ and $\gamma > 0$, it follows that $h(b) = h((\vep x_1 + \eta)(1+(1+x_3)x_2) + \gamma x_2) = \eta + x_2 > 1$.
  Otherwise, $h(b) = h((\vep x_1 + \eta)(1+(1+x_3)x_2) + \gamma x_2+ \delta) = \eta + \delta > 1$. 
  
  We summarize the values of $b$ for which $h(b) = 1$ that were just derived:
  \begin{itemize}
  	\item If $x_1 = 1$, $\eta = \delta = \gamma = 0$, and $0 < \vep \leq 1+x_3$, we have $b = \vep(1+(1+x_3)x_2)$.
  	\item If $x_2 = 1$, $\eta = \delta = 0$, and $0 \leq \vep < \gamma \leq 1+x_3$, we have $b = \vep x_1(2+x_3) + \gamma x_2$.
  	\item If $\eta = 0$, $\delta = 1$, and $0 \leq \vep \leq \gamma \leq 1+x_3$, we have $b = \vep x_1(1+(1+x_3)x_2) + \gamma x_2 + 1$.
  	\item If $\eta = 1$, $\delta = \gamma = 0$, and $0\leq \vep \leq 1+x_3$, we have $b = (1+\vep x_1)(1+(1+x_3)x_2)$.
  \end{itemize}
  	
  	Our next goal is to establish that~\eqref{eqn 2} is always satisfied; recall that we are in the case where $q_j = (1+(1+x_3)x_1)(1+(1+x_3)x_2)$.
  	If $q_i =  (1+(1+x_3)x_1)(1+(1+x_3)x_2)$, the result is trivial. 
  	If $q_i = (1+x_3)(1+(1+x_3)x_1)$, we write $b = \alpha(1+(1+x_3)x_2)+\beta$, where $0\leq \beta < 1+(1+x_3)x_2$ and $0 \leq \alpha < 1+(1+x_3)x_1$. 
  	If $b > 1+(1+x_3)x_2$ (and thus, $\alpha \geq 1$), we can take $c = 1+(1+x_3)x_2$ as this makes both sides of~\eqref{eqn 2} equal to $(\alpha - 1)(1+x_3) + \left\lfloor \frac{\beta(1+x_3)}{1+(1+x_3)x_2} \right\rfloor$.
  	If $2\leq b < 1+(1+x_3)x_2$, note that $\alpha$ must be 0 and hence $b = \beta$. 
  	We write $\beta = \gamma x_2 + \delta$, where $0\leq \delta < x_2$ and $0 \leq \gamma \leq 1+x_3$. 
  	In the case that $\delta > 1$, we set $c = \gamma x_2 + 1$ as this makes both sides of~\eqref{eqn 2} equal to 0. 
  	Moreover, note that we need not consider the case where $\delta = 1$ since $h(\gamma x_2 + 1) = 1$.
  	Therefore, it only remains to find a $c$-value when $\delta = 0$. 
  	If $\delta = 0$, then $b = \beta = \gamma x_2$. 
  	Observe that $\gamma > 0$ since $\gamma = 0$ would imply $h(b) = 0 \ngeq 2$. 
  	In this case, we set $c = (\gamma - 1)x_2 + 1$. 
  	Since $1\leq \gamma \leq 1+x_3$ implies that $1 \leq 2+x_3 - \gamma \leq 1+x_3$, it is straightforward to check that both sides of~\eqref{eqn 2} will again be equal to 0.
  	
  	Finally, if $q_i = 1+x_3$, the analysis becomes slightly more complicated. 
  	As in the previous case, we begin by writing  $b = \alpha(1+(1+x_3)x_2)+\beta$, where $0\leq \beta < 1+(1+x_3)x_2$ and $0 \leq \alpha < 1+(1+x_3)x_1$. 
  	Furthermore, we write $\beta = \gamma x_2 + \delta$, where $0\leq \delta < x_2$ and $0\leq \gamma \leq 1+x_3$, and we write $\alpha = \vep x_1 + \eta$, where $0\leq \eta < x_1$ and $0\leq \vep \leq 1+x_3$. 
  	If $b > 1+(1+x_3)x_2$, we consider $c=1$. Substituting $c=1$ and the alternate form for $b$ into the left-hand side of~\eqref{eqn 2}, yields
  	\begin{align*}
  		\vep + \underbrace{\left\lfloor \frac{(\eta(1+x_3)-\vep+\gamma)(1+(1+x_3)x_2) + \delta(1+x_3) - \gamma}{(1+(1+x_3)x_1)(1+1+x_3)x_2)} \right\rfloor}_{=: \, F_1} \, .
  	\end{align*}
  	On the other hand, substituting into the right-hand side of~\eqref{eqn 2} yields
  	\begin{align*}
  		\vep + \underbrace{\left\lfloor \frac{(\eta(1+x_3)-\vep+\gamma)(1+(1+x_3)x_2) + (\delta-1)(1+x_3) - \gamma}{(1+(1+x_3)x_1)(1+1+x_3)x_2)} \right\rfloor}_{=:  \, F_2} \, .
  	\end{align*}
  	We must show that $F_1 = F_2$. 
  	To this end, let $n_1 = (\eta(1+x_3)-\vep+\gamma)(1+(1+x_3)x_2) + \delta(1+x_3) - \gamma$ and $n_2 = (\eta(1+x_3)-\vep+\gamma)(1+(1+x_3)x_2) + (\delta-1)(1+x_3) - \gamma$, that is, $n_1$ and $n_2$ are the numerators of the arguments in $F_1$ and $F_2$, respectively. 
  	Given the bounds on $\vep$, $\eta$, $\gamma$, and $\delta$, note that $-(1+x_3) \leq \delta(1+x_3) - \gamma < 1+(1+x_3)x_2$, $-2(1+x_3) \leq (\delta-1)(1+x_3) - \gamma < 1+(1+x_3)x_2$, and $-(1+x_3)\leq \eta(1+x_3)-\vep + \gamma \leq (1+x_3)x_1$.
  	Consequently, it follows that $\abs{n_k} < (1+(1+x_3)x_1)(1+(1+x_3)x_2)$ for $k \in \{1,2\}$, and this implies that $F_k$ is equal to either $0$ or $-1$.
  	Therefore, to achieve our goal, we must verify that either $n_1,n_2 < 0$ or $n_1,n_2 \geq 0$. 
  	Now, observe that $b>1+(1+x_3)x_2$ implies that either (A) $\alpha = 1$ and $\beta > 0$, or (B) $\alpha > 1$. For each of these scenarios, we consider subcases. First assume (A) holds, i.e., $\alpha = 1$ and $\beta > 0$.
  	
  	\textbf{Subcase 1 of 2:} Suppose $x_1 = 1$. 
  	Then, since $0 \leq \eta < x_1 = 1$, it follows that $\eta = 0$.
  	Consequently, as $1 = \alpha = \vep x_1 + \eta$, we have that $\vep = 1$. 
  	Thus, $n_1$ and $n_2$ reduce to
  	\begin{align*}
  		n_1 &= (\gamma-1)(1+(1+x_3)x_2) + \delta(1+x_3) - \gamma, \text{ and} \\
  		n_2 &= (\gamma-1)(1+(1+x_3)x_2) + (\delta-1)(1+x_3) - \gamma
  	\end{align*}
  	If $\gamma = 0$, then the numerators $n_1,n_2 < 0$ and hence $F_1 = F_2 = -1$. 
  	If $\gamma = 1$, note that $\delta \neq 1$ (since $\eta = 0, \vep = \gamma$, and $\delta = 1$ imply $h(b) = 1$). So, if $\delta = 0$, then $n_1,n_2 < 0$ and hence $F_1 = F_2 = -1$. Otherwise, if $\delta > 1$, then $n_1,n_2 > 0$ and thus $F_1 = F_2 = 0$.
  	If $\gamma > 1$, then $n_1,n_2 > 0$, implying $F_1 = F_2 = 0$. 
  	
  	\textbf{Subcase 2 of 2:} Suppose $x_1 > 1$.
  	Then, given that $\alpha = 1$, it must be the case that $\vep = 0$ and $\eta = 1$. 
  	Therefore, it immediately follows that $F_1 = F_2 = 0$ since $n_1,n_2 > 0$. 
  	
  	Thus, we can conclude that $F_1 = F_2$ in situation (A). Now, we must consider situation (B), i.e., when $\alpha > 1$. We again consider subcases.
  	
  	\textbf{Subcase 1 of 3:} Suppose $\eta = 0$. Then, it follows that $\vep > 0$, and our numerators reduce to
  	\begin{align*}
  		n_1 &= (\gamma - \vep)(1+(1+x_3)x_2) + \delta(1+x_3) - \gamma, \text{ and} \\
  		n_2 &= (\gamma-\vep)(1+(1+x_3)x_2) + (\delta-1)(1+x_3) - \gamma.
  	\end{align*}
  	If $\gamma > \vep$, then the numerators of both arguments will be positive, implying $F_1 = F_2 = 0$.
  	If $\gamma = \vep$, note that $\delta \neq 1$ (since $\eta = 0$, $\vep = \gamma$, and $\delta = 1$ imply $h(b) = 1$). So, if $\delta = 0$, we have that $n_1,n_2 < 0$, and hence $F_1 = F_2 = -1$. Otherwise, if $\delta > 1$, it follows that $n_1,n_2 \geq 0$ which implies $F_1 = F_2 = 0$.
  	Finally, if $\gamma < \vep$, it follows that $F_1 = F_2 = -1$ since $n_1,n_2 < 0$. 
  	
  	\textbf{Subcase 2 of 3:} Suppose $\eta = 1$. Again, since $\alpha > 1$, this implies $\vep > 0$.
  	As a result, we have the following reduction of $n_1$ and $n_2$:
  	\begin{align*}
  		n_1 &= (1+x_3 - \vep + \vep)(1+(1+x_3)x_2) + \delta(1+x_3) - \gamma, \text{ and} \\
  		n_2 &= (1+x_3 -\vep +\gamma)(1+(1+x_3)x_2) + (\delta-1)(1+x_3) - \gamma.
  	\end{align*}
  	If $\vep < 1+x_3$ then we have $n_1,n_2 > 0$, and thus $F_1 = F_2 = 0$. Otherwise, $\vep = 1+x_3$.
  	Note that since $\eta = 1$, $\delta$ and $\gamma$ cannot both be 0 as this would imply $h(b) = 1$. 
  	Therefore, if $\gamma = 0$, it must be that $\delta > 0$ which implies $n_1,n_2>0$ and $F_1 = F_2 = 0$. 
  	Otherwise, if $\gamma > 0$, $n_1,n_2 > 0$, and thus $F_1 = F_2 = 0$. 
  	
  	\textbf{Subcase 3 of 3:} Suppose $\eta > 1$. Then, it immediately follows that $n_1, n_2 > 0$, and we have that $F_1 = F_2 = 0$.
  	
  	Thus, we find that $F_1 = F_2$. 
  	Therefore, we have that~\eqref{eqn 2} is satisfied with $c=1$ for $b>1+(1+x_3)x_2$.
  	It remains to consider $2\leq b < 1+(1+x_3)x_2$. 
  	If $2\leq b < 1+(1+x_3)x_2$, note that $\alpha$ must be $0$ and hence $b = \beta$. 
  	Therefore, to ensure we satisfy~\eqref{eqn 1}, we consider $2\leq \beta < 1+(1+x_3)x_2$. Since $\beta > 1$ in this case, we may take $c = 1$ from which it is straightforward to verify that both sides of~\eqref{eqn 2} are equal to 0. 
  	This completes our third and final case, thereby establishing IDP for $\br$-vectors of type $(vii)$.

\bibliographystyle{plain}
\bibliography{Braun}
\end{document}